\numberwithin{equation}{section}
\DeclareMathOperator*{\argmax}{\textnormal{argmax}}
\newcommand{\im}{\textnormal{Im}}
\newcommand{\vp}{\varphi}
\newcommand{\eps}{\epsilon}
\newcommand{\E}{\mathbb{E}}
\renewcommand{\Pr}{\mathbb{P}}
\newcommand{\la}{\langle}
\newcommand{\ra}{\rangle}
\renewcommand{\d}{\textnormal{d}}
\newcommand{\kl}{D_{\textsc{kl}}}
\newcommand{\Tr}{\text{Tr}}
\newcommand{\dsphere}{\mathbb{S}^{d-1}}
\renewcommand{\Re}{\mathbb{R}}
\newcommand{\hatV}{\widehat{V}}
\renewcommand{\subset}{\subseteq}
\newcommand{\dball}{\mathbb{B}^{d}}
\newcommand{\op}{{\textnormal{op}}}
\newcommand{\muhat}{\widehat{\mu}}
\def\ddefloop#1{\ifx\ddefloop#1\else\ddef{#1}\expandafter\ddefloop\fi}
\def\ddef#1{\expandafter\def\csname cal#1\endcsname{\ensuremath{\mathcal{#1}}}}
\theoremstyle{plain}
\newtheorem{theorem}{Theorem}[section]
\newtheorem{proposition}[theorem]{Proposition}
\newtheorem{corollary}[theorem]{Corollary}
\newtheorem{lemma}[theorem]{Lemma}
\theoremstyle{definition}
\newtheorem{example}[theorem]{Example}
\newtheorem{remark}[theorem]{Remark}
\newtheorem{definition}[theorem]{Definition}
\renewcommand{\citealt}{\citet}
\renewcommand{\citealp}{\citep}
\newif\ifarxiv
\title{A variational approach to dimension-free \\
self-normalized concentration}
\author[1]{Ben Chugg}
\author[1]{Aaditya Ramdas}
\affil[1]{Departments of Machine Learning and Statistics, CMU}
\affil[ ]{\texttt{\{benchugg, aramdas\}@cmu.edu}}
\date{\today}
\begin{document}

\maketitle

\begin{abstract}
    We study the self-normalized concentration of vector-valued stochastic processes.  
    We focus on bounds for ``sub-$\psi$'' processes, a well-known and quite general class of processes that encompasses a wide variety of well-known tail conditions (including sub-exponential, sub-Gaussian, sub-gamma, sub-Poisson, and several heavy-tailed settings without a moment generating function such as symmetric or bounded 2nd or 3rd moments). 
    Our results recover and generalize the influential bound of \citet{pena2004self} (proved again in \citealt{abbasi2011improved}) in the sub-Gaussian case. Further, we fill a gap in the literature between determinant-based bounds and more recent bounds based on condition numbers. 
    As applications we prove a Bernstein inequality for random vectors satisfying a moment condition (a more general condition than boundedness), and also provide the first dimension-free self-normalized  empirical Bernstein inequality. Our techniques are based on the variational (PAC-Bayes) approach to concentration.  
\end{abstract}

{\small
\tableofcontents
}

\section{Introduction}
The modern theory of finite-sample, self-normalized concentration originates primarily in the work of de la Pe\~{n}a, Klass, Lai, Shao and co-authors in the 2000s~\citep{pena2004self,pena2007pseudo,pena2008self,pena2009exponential,pena2009theory}. Much of this work was focused on scalar-valued processes until \citet{pena2009theory} began studying the concentration of \emph{vector-valued} self-normalized processes in Euclidean spaces. These results take the form of bounds on $\|S_t\|_{V_t^{-1}}$ where $(S_t)_{t\geq 0}$ is a stochastic process in $\Re^d$ and $(V_t)_{t\geq 0}$, $V_t\in \Re^{d\times d}$, is some associated positive-definite variance process. 
In the sub-Gaussian case, \citet{pena2004self} provide bounds---much earlier than the oft-cited bound of \citet{abbasi2011improved}---depending on the log-determinant of $V_t$ using the method of mixtures with a multivariate Gaussian mixing distribution.  
Under slightly more general tail assumptions on $(S_t)$, \citet{pena2009theory} provide bounds that $\|S_t\|_{V_t^{-1}}$ lies in a convex set, though these are not computable in closed-form. 

What about more general classes of processes? Recently, \citet{whitehouse2023time} gave self-normalized concentration bounds for vector-valued sub-$\psi$ processes, a powerful and general abstraction first introduced by \citet{howard2020time} 
which captures many distributions of interest, including sub-Poisson, sub-exponential, sub-gamma, in addition to several heavy tailed settings where the moment generating function may not exist (see Section~\ref{sec:prelims}). While comprehensive, the bounds of \citet{whitehouse2023time} bounds deviate from the aforementioned log-determinant bounds in two ways. First, they are dimension-dependent and second, they scale with the condition number of $V_t$ instead of the determinant. 
Neither dependency is strictly better than the other; both can be tighter in different regimes. However, the discrepancy  invites the following question: 
\begin{quote}
\emph{Do dimension-free, determinant-based bounds exist for general sub-$\psi$ processes?}
\end{quote}

This paper answers in the affirmative, providing dimension-free, determinant-based self-normalized concentration inequalities for vector-valued sub-$\psi$ processes. 
\ifarxiv
To demonstrate the scope of our technique, 
in Section~\ref{sec:sub-gaussian} we will recover the bound of~\citet{abbasi2011improved} exactly and show that it holds without modification for general sub-Gaussian processes (i.e., outside of their particular bandit setting).  
We then turn our attention to general sub-$\psi$ processes.  
Here our bounds have a similar functional form as the sub-Gaussian bound but are modified by multiplicative and additive factors that depend on $\psi$. Nevertheless, they remain closed-form and easily computable. 

\fi 
Our techniques are based on the variational approach to concentration (sometimes called the PAC-Bayesian approach~\citep{catoni2017dimension,catoni2018dimension}), which has been successfully leveraged in recent years to study the concentration of random vectors and matrices~\citep{zhivotovskiy2024dimension,oliveira2016lower,nakakita2024dimension,ziemann2025vector,chugg2025time}. The variational approach is comparable to a data-dependent method of mixtures,
in which we choose the mixture distribution to be data-dependent but pay the price of this dependency with an $f$-divergence.

\ifarxiv
While closing the gap between determinant-based and condition number-based bounds is mathematically interesting, it is also practically relevant. Beyond their application in bandit problems~\citep{abbasi2011improved,abbasi2011online,vial2022improved,chowdhury2017kernelized,lattimore2020bandit,whitehouse2023sublinear}, self-normalized bounds are used in many areas, such as system identification~\citep{matni2019tutorial}, control of dynamic systems~\citep{lai1982least}, estimation in autoregressive processes and diffusion-limited aggregation processes~\citep{bercu2019new},  sequential inference in time-series~\citep{shao2010self}, and Markov decision-processes~\citep{zhang2024achieving,vial2022improved}. Moreover, in many of these applications, matrices are often ill-conditioned due to numerical instability or anisotropy~\citep{higham2002accuracy}. Determinant-based bounds which omit the dependence on condition numbers can influence results in these areas. 
\fi

\ifarxiv
\ifarxiv
\subsection{Related Work}
\else 
\section{Related Work}
\fi 
\label{sec:related-work}

The canonical example of a self-normalized process is the $t$-statistic, introduced by William Gosset (aka Student) in 1908~\citep{student1908probable}. The behavior of the $t$-statistic can be understood by studying objects of the form $\sum_{i\leq n} X_i / \sqrt{\sum_{i\leq n} X_i^2}$, where $X_1,\dots,X_n$ are iid scalar observations. The asymptotic behavior of such ``self-normalized'' sums was the focus of a significant body of working beginning in the late 60s and early 70s~\citep{efron1969student,logan1973limit} and gaining significant momentum in the 90s~\citep{griffin1989self,griffin1991some, jing1999exponential,gine1997student,gine1998lil,shao1997self}.

De la Pe\~{n}a and others then built off of this work to provide the foundations of a general theory of self-normalized concentration, best summarized in the 2008 book of de la Pe\~{n}a, Lai, and Shao~\citep{pena2008self}; the book is mostly focused on the scalar setting (with some sub-Gaussian vector bounds as we describe in the following paragraph). 
In the scalar setting, subsequent work by \citet{bercu2008exponential,bercu2019new} extended these results to heavy-tailed increments, developing self-normalized bounds when the normalization process $(V_t)$ is the predictable or total quadratic variation. In 2020, \citet{howard2020time} further generalized this framework by the studying time-uniform concentration of sub-$\psi$ processes---processes characterized by specific tail conditions on the cumulant generating functions---thereby providing a general, unifying framework for both self-normalized and non-self-normalized scalar (and matrix) bounds.

Beyond the scalar setting, \citet{pena2009theory} (see also \citet[Chapter 14]{pena2008self}) considered the self-normalized concentration of vector-valued processes, which take the form of bounds on $\|S_t\|_{V_t^{-1}}$ for a vector-valued process $(S_t)$ and a matrix-valued process $(V_t)$. Hotelling's $T^2$-statistic~\citep{hotelling1931generalization}---the generalization of the Student t-statistic to the multivariate setting---is an example of such a process. De la Pe\~{n}a~\citep{pena2009theory} give bounds on the probability that $\|S_t\|_{V_t^{-1}}$ belongs to a convex set, when $S$ and $V$ obey a ``generalized canonical''  assumption (see \citep[Equation (14.5)]{pena2008self}). This assumption is similar in spirit to the sub-$\psi$ assumption of \citet{howard2020time} (extended to $\Re^d$) that we employ (Definition~\ref{def:sub-psi}). Unfortunately, outside of the sub-Gaussian case, explicit expressions for these sets appear unavailable. 

In the contextual bandit setting with sub-Gaussian noise, \citet{abbasi2011improved} apply results of \citet{pena2004self} and give what is now a famous self-normalized inequality. They show that, if $S_t = \sum_{k\leq t} X_k \eps_k$ and $V_t = U_0 + \sum_{k\leq t}X_kX_k^\intercal$ where $\eps_k$ is scalar sub-Gaussian and $X_k\in\Re^d$ is predictable (i.e., measurable at time $k-1$) then $\|S_\tau\|_{V_\tau^{-1}} \lesssim \sqrt{\log(\det V_\tau)}$ at all stopping times $\tau$. This bound was extended to the infinite-dimensional case in \citet{abbasi2013online}.

Recently, \citet{whitehouse2023time} extended the general sub-$\psi$ condition of \citet{howard2020time} to $\Re^d$ and showed that, if $(S_t)$ and $(V_t)$ obey such a condition, then 
\begin{equation}
\label{eq:whitehouse-1}
 \|S_\tau\|_{V_\tau^{-1}} \lesssim \gamma_{\min}^{1/2}(V_\tau)(\psi^*)^{-1}\left(\gamma_{\min}^{-1}(V_\tau)(\log\log(\gamma_{\max}(V_\tau)) + d\log(\kappa(V_\tau))\right),   
\end{equation}
where $\kappa(V_t) = \gamma_{\max}(V_\tau) / \gamma_{\min}(V_\tau)$ is the ratio of the largest to smallest eigenvalue of $V_\tau$ (the condition number) and $\psi^*$ is the convex conjugate of $\psi$. In the sub-Gaussian case we have $(\psi^*)^{-1}(x) \asymp \sqrt{x}$, so the bound of \citet{whitehouse2023time} scales as
\begin{equation}
\label{eq:whitehouse-2}
  \|S_\tau\|_{V_\tau^{-1}} \lesssim \sqrt{\log\log(\gamma_{\max}(V_\tau)) + d\log\kappa(V_t)}.  
\end{equation}
This qualitatively new bound was also shown to be sharp with a corresponding lower bound.
Neither $\det V_\tau$ (the product of all eigenvalues) or $\kappa(V_\tau)$  dominates the other, making the bound of \citet{whitehouse2023time} and \citet{abbasi2011improved} incomparable in general. If the matrices are well-conditioned then the former may be tighter, depending on the dimension. If there are many small eigenvalues (e.g., perhaps $V_t$ is the result of many rank $k$ updates for $k\ll d$) then the determinant-based bound may be tighter. In any case, the gap between condition number-based bounds and determinant-based bounds is what motivates this work. 

Very recently, both \citet{akhavan2025bernstein} \citet{metelli2025generalized} gave dimension-free, self-normalized Bernstein inequalities, building off the earlier work of \citet{faury2020improved} and \citet{zhou2021nearly}, both of which provide  dimension-dependent Bernstein-type bounds. \citet{akhavan2025bernstein} study the case in which the normalization process $(V_t)$ is the quadratic variation of $(S_t)$ and \citet{metelli2025generalized} study the bandit setting (as do \citet{faury2020improved} and \citet{zhou2021nearly}).  All of these results apply to bounded processes and do not extend to general sub-$\psi$ processes. They are closest to the Bennett and Bernstein results we give in Section~\ref{sec:bennett}, though we note that our Bernstein bound applies to random vectors satisfying the Bernstein condition---a more general condition than boundedness.

\ifarxiv
\begin{table}[t]
    \centering
    \small 
    \renewcommand\arraystretch{1.16}
    \begin{tabular}{r| c | c| c }
     & \emph{Condition} & \emph{Dimension-free} & \emph{Dependence} \\
    \hline 
    \citet{faury2020improved} & Bounded & & $\det V_t$ \\ 
    \citet{zhou2021nearly} & Bounded & & UB on $\|S_t\|$ \\
    \citet{ziemann2025vector} & Bounded & & $\det V_t$ \\
    \citet{akhavan2025bernstein}$^{**}$ & Bounded & \checkmark & $\det V_t$ \\ 
    \citet{metelli2025generalized}$^{**}$ & Bounded & \checkmark & $\det V_t$ \\ 
    \citet{kirschner2025confidence} & Bounded & & $\det V_t$ \\ 
    \citet{kirschner2025confidence} & Gaussian & \checkmark & $\det V_t$ \\ 
        \citet{pena2004self} &  sub-Gaussian & \checkmark & $\det V_t$ \\
        \citet{abbasi2011improved} & sub-Gaussian & \checkmark & $\det V_t$ \\
        \citet{abbasi2013online} & sub-Gaussian & \checkmark & $\det V_t$ \\ 
        \citet{pena2008self}$^*$ & Gen. canonical & \checkmark & $\det V_t$\\ 
        \citet{martinez2025vector}$^{**}$ & Bernstein & \checkmark & $\det V_t$ \\ 
        \citet{whitehouse2023time} & sub-$\psi$ & & $\kappa(V_t) $ \\ 
        {\bf This work } & sub-$\psi$ & \checkmark & $\det V_t$
    \end{tabular}
    \caption{A brief overview of previous vector-valued self-normalized concentration inequalities. ``Condition'' refers to the distributional assumptions made on the underlying process.  
    ``UB on $\|S_t\|$'' means the bound is a function of the upper bound of the norm of the observations (i.e., it is not adaptive to $V_t$). 
    We note that \citet{kirschner2025confidence} provide two bounds: one under a Gaussian assumption and one under a bounded assumption. 
    ``Gen. canonical'' refers to the generalized canonical assumption of \citet[Section 14.1.2]{pena2008self}. The difference between \citet{abbasi2011improved} and \citet{abbasi2013online} is that the latter holds in infinite-dimensional spaces. 
    ($^*$) indicates that the results are not in closed-form. 
    ($^{**}$) indicates that these works were concurrent to this one.}
    \label{tab:previous_work}
\end{table}
\fi 

In the contextual bandit setting, \citet{martinez2025vector} also give a dimension-free determinant-based bound for random vectors satisfying the Bernstein condition. It differs from ours in several ways: It is time-uniform for the finite horizon $1\leq t\leq n$ for some fixed $n$, it has a multiplicative dependence between $\log(1/\delta)$ and $\log\det(V_t)$, and it requires a deterministic upper bound on $\log\det(V_t)$ to instantiate. Nevertheless, it has several appealing properties, and will dominate our bound in some regimes. We  
provide a more involved discussion in  Section~\ref{sec:suboptimal}.  See Table~\ref{tab:previous_work} for an overview of known self-normalized concentration results in the multivariate setting.

Finally, let us discuss previous work on the variational approach to concentration, which undergirds our work here. This technique originated in PAC-Bayesian approach to statistical learning theory as a method to bound the risk of randomized predictors. As far as we are aware, the first to employ the variational approach explicitly for the purposes of multivariate concentration was \citet{oliveira2016lower} in 2016, who was interested in bounding the tails of quadratic forms. Since then, a number of authors of have employed it to solve various problems. 

In 2018, \citet{giulini2018robust} used the variational approach to estimate the Gram operator in Hilbert spaces. Around the same time,  \citet{catoni2017dimension,catoni2018dimension} used it to estimate the mean of random vectors  and the operator norm of random matrices in finite-dimensional Euclidean spaces. More recently, both \citet{nakakita2024dimension} and \citet{zhivotovskiy2024dimension} 
return to the question of matrix concentration, each giving dimension-free bounds on sums of random matrices under various conditions. \citet{chugg2025time} study the concentration of random vectors using the variational method, giving bounds under a variety of distributional assumptions (including sub-$\psi$ condition which we study here). All of the above works using the variational approach are in non-self-normalized settings. 

\citet{kirschner2025confidence} and \citet{ziemann2025vector} recently noted that variational ideas can be used to obtain bounds similar to (or identical to) the ones presented by \citet{abbasi2011improved}. \citet{ziemann2025vector} also gives a dimension-dependent Bernstein inequality for bounded observations in a contextual bandit setting. 
One can view our work here as extending the ideas of \citet{kirschner2025confidence} and \citet{ziemann2025vector} to much more general stochastic processes.

\else 
\textbf{Related work.} We give a full treatment of related work in Appendix~\ref{sec:related-work}. See Table~\ref{tab:previous_work} for a concise summary of previous bounds and their domain of applicability. Our main point of comparison throughout this work will be that of \citet{whitehouse2023time} who is the only other work to-date which studies self-normalized concentration of vector-valued sub-$\psi$ processes in full generality. \citet{whitehouse2023time} show that if $(S_t,V_t)$ is a sub-$\psi$ process (to be defined shortly), then
\begin{equation}
 \|S_\tau\|_{V_\tau^{-1}} \lesssim \gamma_{\min}^{1/2}(V_\tau)(\psi^*)^{-1}\left(\gamma_{\min}^{-1}(V_\tau)(\log\log(\gamma_{\max}(V_\tau)) + d\log(\kappa(V_\tau))\right),   
\end{equation}
where $\kappa(V_t) = \gamma_{\max}(V_\tau) / \gamma_{\min}(V_\tau)$ is the ratio of the largest to smallest eigenvalue of $V_\tau$ (the condition number) and $\psi^*$ is the convex conjugate of $\psi$. Our bounds will be similar, but will have the factor $\log\det(V_t)$ instead of $d\log \kappa(V_t)$. 

\begin{table}[t]
    \centering
    \small 
    \begin{tabular}{r| c | c| c }
     & \emph{Condition} & \emph{Dimension-free} & \emph{Dependence} \\
    \hline 
    \citet{faury2020improved} & Bounded & & $\det V_t$ \\ 
    \citet{zhou2021nearly} & Bounded & & UB on $\|S_t\|$ \\
    \citet{ziemann2025vector} & Bounded & & $\det V_t$ \\
    \citet{akhavan2025bernstein}$^{**}$ & Bounded & \checkmark & $\det V_t$ \\ 
    \citet{metelli2025generalized}$^{**}$ & Bounded & \checkmark & $\det V_t$ \\ 
    \citet{kirschner2025confidence} & Bounded & & $\det V_t$ \\ 
    \citet{kirschner2025confidence} & Gaussian & \checkmark & $\det V_t$ \\ 
        \citet{pena2004self} &  sub-Gaussian & \checkmark & $\det V_t$ \\
        \citet{abbasi2011improved} & sub-Gaussian & \checkmark & $\det V_t$ \\
        \ifarxiv\citet{abbasi2013online} & sub-Gaussian & \checkmark & $\det V_t$ \\ \fi 
        \citet{pena2008self}$^*$ & Gen. canonical & \checkmark & $\det V_t$\\ 
        \citet{martinez2025vector}$^{**}$ & Bernstein & \checkmark & $\det V_t$ \\ 
        \citet{whitehouse2023time} & sub-$\psi$ & & $\kappa(V_t) $ \\ 
        {\bf This work } & sub-$\psi$ & \checkmark & $\det V_t$
    \end{tabular}
    \caption{A brief overview of previous vector-valued self-normalized concentration inequalities. ``Condition'' refers to the distributional assumptions made on the underlying process.  
    ``UB on $\|S_t\|$'' means the bound is a function of the upper bound of the norm of the observations (i.e., it is not adaptive to $V_t$). 
    \ifarxiv We note that \citet{kirschner2025confidence} provide two bounds: one under a Gaussian assumption and one under a bounded assumption. \fi 
    ``Gen. canonical'' refers to the generalized canonical assumption of \citet[Section 14.1.2]{pena2008self}. \ifarxiv The difference between \citet{abbasi2011improved} and \citet{abbasi2013online} is that the latter holds in infinite-dimensional spaces. \fi 
    ($^*$) indicates that the results are not in closed-form. 
    ($^{**}$) indicates that these works were concurrent to this one.}
    \label{tab:previous_work}
\end{table}
\fi

\ifarxiv 
\subsection{Contributions and outline}
\label{sec:contributions}

The overarching goal of this work is to provide self-normalized bounds for sub-$\psi$ processes which depend on $\log \det V_\tau$ instead of $d\log \kappa(V_\tau)$. 
After defining sub-$\psi$ processes and giving an overview of the variational approach in Section~\ref{sec:prelims}, Section~\ref{sec:sub-gaussian} provides a gentle introduction to our method, showing that we recover the bound of \citet{abbasi2011improved} exactly when the process is sub-Gaussian. 

Section~\ref{sec:super-gaussian} then provides concentration results for general sub-$\psi$ processes evolving in $\Re^d$. In particular, Theorem~\ref{thm:sub-psi} provides a dimension-free, self-normalized ``line-crossing inequality,'' which gives the probability that $\|S_t\|_{V_t^{-1}}$ ever crosses a threshold which depends on a parameter $\lambda>0$. Different values of $\lambda$ optimize the threshold at different (intrinsic) times. 

Theorems~\ref{thm:stitching-simplified} and~\ref{thm:convex_conjugate_bound} employ a method known as \emph{stitching}~\citep{howard2021time} which iteratively applies Theorem~\ref{thm:sub-psi} with different parameters in different geometrically-spaced epochs of time, in order to give a bound which remains tight at all times. This procedure costs only an iterated logarithm term (this is similar to a doubling trick, chaining, peeling, etc.). Theorem~\ref{thm:stitching-simplified} applies this technique to sub-gamma processes specifically and obtains precise constants, while Theorem~\ref{thm:convex_conjugate_bound} applies to more general processes but is asymptotic only. 

Section~\ref{sec:bennett} applies our results to obtain self-normalized Bennett and Bernstein inequalities, and Section~\ref{sec:emp-bernstein} provides a self-normalized, \emph{empirical} Bernstein inequality for bounded random vectors. As far as we are aware, this is the first result of its kind that is dimension-free. Section~\ref{sec:summary} concludes. 
\else 
\textbf{Contributions and outline.}
After defining sub-$\psi$ processes and giving an overview of the variational approach in Section~\ref{sec:prelims}, Section~\ref{sec:super-gaussian} provides concentration results for general sub-$\psi$ processes evolving in $\Re^d$. In particular, Theorem~\ref{thm:sub-psi} provides a dimension-free, self-normalized ``line-crossing inequality,'' which gives the probability that $\|S_t\|_{V_t^{-1}}$ ever crosses a threshold which depends on a parameter $\lambda>0$. Different values of $\lambda$ optimize the threshold at different (intrinsic) times. 

Theorem~\ref{thm:stitching-simplified} then employs a method known as \emph{stitching}~\citep{howard2021time} which iteratively applies Theorem~\ref{thm:sub-psi} with different parameters in different geometrically-spaced epochs of time, in order to give a bound which remains tight at all times. This procedure costs only an iterated logarithm term (this is similar to a doubling trick, chaining, peeling, etc.). 

As an application, Section~\ref{sec:emp-bernstein} provides the first dimension-free, self-normalized empirical Bernstein inequality for bounded random vectors. It is `empirical' because it does not rely on a priori knowledge of the variance, but instead uses the empirical variance to adapt to distribution over time.  
In the spirit of more traditional applications, Appendix~\ref{sec:bennett} applies our results to obtain self-normalized (non-empirical) Bennett and Bernstein inequalities.

We also study sub-Gaussian processes specifically in Appendix~\ref{sec:sub-gaussian}, which both serves as an introduction to our method in a familiar setting, and also demonstrates that our technique precisely recovers the bounds of \citet{pena2004self} and \citet{abbasi2011improved}. This gives a sense of its scope and generality. 
\fi

\section{Preliminaries} 
\label{sec:prelims}
Fix a filtered probability space $(\Omega, \calF\equiv (\calF_t)_{t\geq 0}, P)$. We assume that we are working in discrete time, so $\calF$ is indexed by $\mathbb{N}\cup\{0\}$.
We consider two stochastic processes $(S_t)_{t\geq 1}$ and $(V_t)_{t\geq 0}$ on $(\Omega, \calF, P)$ which are adapted to $\calF$ (i.e., $S_t$ and $V_t$ are $\calF_t$-measurable). Throughout this work we assume that $S_t\in \Re^d$ and $V_t\in\Re^{d\times d}$ for some finite positive integer $d$; $S$ is a mnemonic for sum, and $V$ for variance.
We assume that $V_t$ is positive-definite for each $t\geq 0$, meaning that $\la \theta, V_t\theta\ra\geq 0$ for all $\theta\in\Re^d$. Typically, we will also have $V_t \preceq V_{t+1}$, where $\preceq$ is the positive semidefinite partial (Loewner) order.

We are interested obtaining bounds on $\|S_\tau\|_{V_\tau^{-1}}$  for any $\calF$-adapted stopping time $\tau$ under various assumptions on the relationship between $S_t$ and $V_t$. More specifically, we think of $(V_t)$ as the accumulated variance process of $(S_t)$ or, in the words of \citet{blackwell1973amount}, a measure of the intrinsic time-scale of $(S_t)$. This is formalized with the notion of a sub-$\psi$ process, which generalizes many distributions of interest. Let $\dsphere := \{ x\in \Re^d: \|x\|_2=1\}$ denote the unit sphere in $\Re^d$. Recall that a process $(A_t)$ is a supermartingale with respect to $\calF$ if it is $\calF$-adapted and $\E[A_t|\calF_{t-1}] \leq A_{t-1}$ for all $t\geq 1$.

\begin{definition}[Sub-$\psi$ process in $\Re^d$]
\label{def:sub-psi}
    Let $\psi: [0,\lambda_{\max}) \to \Re_{\geq 0}$. Let $(S_t)$ be an $\Re^d$-valued process and $(V_t)$, $V_t\succ0$ an $\Re^{d\times d}$-valued process, both adapted to a filtration $\calF$. 
    We say that $(S_t)$ is a sub-$\psi$ process with variance proxy $(V_t)$ if, for all $\theta\in\dsphere$, all $\lambda\in[0,\lambda_{\max})$, and all $t\geq 0$, 
    \begin{equation}
    \label{eq:sub-psi-def}
      \exp\big\{ \lambda \la \theta, S_t\ra - \psi(\lambda) \la \theta, V_t\theta\ra \} \leq L_t^\lambda(\theta),
    \end{equation}
    where $(L^\lambda_t(\theta))_{t\geq 0}$ is a nonnegative supermartingale adapted to $(\calF_t)$. Equivalently, we say that $(S_t,V_t)$ is a sub-$\psi$ process. 
\end{definition}

For scalar $S_t$ and $V_t$, sub-$\psi$ processes were originally proposed and studied by \citet{howard2020time} for scalar and matrix settings, generalizing and unifying a wide array of earlier work, such as \citet{pena2004self}. The vector case has been recently studied by \citet{whitehouse2023time}. Like both \citet{howard2020time} and \citet{whitehouse2023time}, we assume that $\psi$ is \emph{CGF-like}, meaning that it is strictly convex and twice continuously differentiable, with $\lim_{\lambda\to 0^+}\d \psi(\lambda)/\d\lambda = 0=\psi(0)$.

While the definition of a sub-$\psi$ process may appear abstract at first glance, let us reassure the reader that the definition captures many familiar (and unfamiliar) examples, including certain heavy-tailed settings. 
\begin{enumerate} 
    \item $\psi_N = \psi_N(\lambda) := \lambda^2/2$ for $\lambda\in[0,\lambda_{\max})$ results in a sub-exponential  process (sub-Gaussian if $\lambda_{\max} =\infty)$. 
    If $X_t$ is $\Sigma_t$-sub-Gaussian\footnote{That is, $\E[\exp\{\la \theta, X_t\ra) - \psi_N(\lambda)\la \theta, \Sigma_t\theta\ra\}|\calF_{t-1}] \leq 1$ for all $\theta\in\dsphere$ with $\lambda_{\max}=\infty$. In the scalar case, $X_t$ is $\sigma_t$-sub-Gaussian (conditional on the past)  if $\E[\exp(\lambda X_t)|\calF_{t-1}] \leq \exp(\psi_N(\lambda) \sigma_t^2)$ for all real $\lambda$.} then $S_t=\sum_{k\leq t}X_k$ is a sub-$\psi_N$ process with $\lambda_{\max}=\infty$ and $V_t = \sum_{k\leq t}\Sigma_k$. Here $(\Sigma_t)$ can be $\calF$-adapted (not just predictable). 
    As a heavy-tailed example, if $\E[\la \theta, X_t\ra|\calF_{t-1}]<\infty$ for all $\theta\in\dsphere$, then $S_t = \sum_{k\leq t}X_k$ is sub-$\psi_N$ with $\lambda_{\max}=\infty$ and $V_t = \frac{1}{3}\sum_{k\leq t} X_kX_k^\intercal + \frac{2}{3}\sum_{k\leq t}\E[X_kX_k^\intercal |\calF_{k-1}]$ (see \citet[Lemma 3f]{howard2020time}). 
    More exotically,  consider any sequence of random vectors $(X_t)$ that are conditionally symmetric: $X_t \sim -X_t |\calF_{t-1}$ for all $t$.  Lemma 3 of \citet{pena2007pseudo} implies that $S_t = \sum_{k\leq t} X_k$ and $V_t = \sum_{k\leq t} X_kX_k^\intercal$ define a sub-$\psi_N$ process with $\lambda_{\max}=\infty$; note here that we did not require even a single moment to exist, let alone a moment generating function, in order to yield a sub-$\psi_N$ process. As in the scalar case, bounded random vectors can also be shown to be sub-$\psi_N$. 
    \item $\psi_{G,c}(\lambda) =  \frac{\lambda^2}{2(1-c\lambda)}$ for $c\in\Re$ and $\lambda_{\max} = 1/\max\{c,0\}$ results in  a sub-gamma process; the terminology stems from the fact that $\psi_{G,c}$ is an \emph{upper bound} on the CGF of a gamma random variable~\citep[Section 2.4]{boucheron2013concentration}. As in the scalar case, random vectors whose moments in each direction obey a Bernstein-type inequality can be shown to be sub-$\psi_{G,c}$; see Lemma~\ref{lem:bernstein-process}.  \citet{whitehouse2023time} further show that if $\E|\theta,X_t|^3$ is finite for all $\theta\in\dsphere$, then $S_t =\sum_{j\leq t}X_j$ is sub-$\psi_{G,c}$ for $c=1/6$ and $V_t = \sum_{j\leq t}(X_jX_j^\intercal + \E[\|X_j\|^3|\calF_{j-1}]I_d)$. 
    Moreover, \citet[Proposition 1]{howard2020time} show that any twice differentiable, CGF-like $\psi$ can be bounded by $a\psi_{G,c}$ for some $a,c>0$. Thus, any sub-$\psi$ process is sub-$\psi_{G,c}$ after scaling (though transforming to sub-gamma may cause some looseness in the resulting bounds). 
    \item $\psi_{E,c}(\lambda) = (-\log(1-c\lambda) - c\lambda)/c^2$ for $c\in\Re$ where $\lambda_{\max} =1/\max\{c,0\}$ results in a sub-exponential process, which is equivalent\footnote{In particular, Appendix E in \citet{howard2020time} shows that $\log \E e^{\lambda X} \leq \psi_{E,c}(\lambda)\sigma^2$ for all $0\leq \lambda<  1/c$ and some $\sigma,c>0$  if and only if $\log \E e^{\lambda X} \leq \psi_N(\lambda)u $ for all $0\leq \lambda < 1/b$ for some $u,b>0$.} up to constants to the sub-exponential condition mentioned under $\psi_N$; the terminology stems from the fact that $\psi_{E,c}$ is the CGF of a centered unit-rate negative-exponential random variable. Importantly, sums of bounded random vectors can be shown to be sub-exponential with an \emph{adapted} (i.e.\ not $\calF$-predictable) variance $V_t$; see Section~\ref{sec:emp-bernstein}. Indeed, this observation undergirds our empirical Bernstein bound. As in the sub-gamma case, \citet[Proposition 1]{howard2020time} also shows that any CGF-like $\psi$ can be upper bounded by $a\psi_{E,c}$ for some $a,c\geq 0$ (but as mentioned above for sub-gamma, exploiting this transformation may yield looser bounds than directly dealing with $\psi$). 
    \item $\psi_{P,c}(\lambda) = (e^{c\lambda} - c\lambda -1)/c^2$ for  $c\in\Re$ and $\lambda_{\max} = \infty$ results in a sub-Poisson process; this terminology stems from the fact that $\psi_{P,c}$ is the CGF of a centered unit-rate Poisson random variable. Most proofs of Bennett's inequality (e.g.,~\citet[Theorem 2.9]{boucheron2013concentration}) involve showing that a bounded random variable is sub-Poisson. In the multivariate setting, Lemma~\ref{lem:bennett-process} shows that vectors $(X_t)$ with $\|X_t\|\leq b$ are sub-$\psi_{P,b}$ with $V_t = \sum_{j\leq t} \E[X_jX_j^\intercal|\calF_{j-1}]$.
\end{enumerate}

\ifarxiv
For any $\psi$  such that $\psi(\lambda)/\lambda^2$ is nondecreasing in $\lambda$ (a property known as \emph{super-Gaussianity} which all of the above examples satisfy; see also Remark~\ref{rem:super-gaussian}), we can lift any scalar sub-$\psi$ process to a multivariate sub-$\psi$ process as follows. Consider $S_t=\sum_{k\leq t} \eta_k X_k$ where $(X_t)_{t\geq 1}$ is predictable, $\|X_t\|\leq 1$, and $(\eta_t)$ is a scalar-valued sub-$\psi$ process with variance proxy $(U_t)\subset\Re_{\geq 0}$.  
Then $(S_t)$ is sub-$\psi$ with variance proxy $V_t = \sum_{k\leq t} U_k X_kX_k^\intercal$. This is the contextual bandit setting described in Section~\ref{sec:related-work}.  
\fi

Let us now turn to the techniques we use to prove our results. 
The bounds of \citet{pena2008self} and \citet{abbasi2011improved} are proved using the so-called ``method of mixtures'' (sometimes also called the pseudo-maximization technique~\citep{pena2007pseudo}). At its core, the method of mixtures involves (i) finding a family of supermartingales $Z(\theta)$ indexed by some parameter space $\Theta$, (ii) constructing a new supermartingale by integrating this family with respect to some measure $\rho$ over $\Theta$, say $Z_t = \int_\Theta Z_t(\theta) \d\rho$, and (iii) applying Markov's inequality (or, more precisely, Ville's inequality) to $Z_t$. For instance, one might let $Z_t(\theta)$ be the left hand side of~\eqref{eq:sub-psi-def} and $\rho$ be a measure over $\Theta=\dsphere$. 

To prove our results, we instead turn to a related but distinct technique which we call the \emph{variational approach to concentration}. 
Here we also begin with a mixture but after step (2) applies the Donsker-Varadhan change of  measure formula in order to bound $Z_t$ in terms of the KL divergence between $\rho$ and some prior distribution. The benefit of this technique is that $\rho$ can be data-dependent. 
This approach lies at the heart of (and originated in) PAC-Bayesian learning theory~\citep{guedj2019primer,alquier2024user}.  A generic template for such a bound, which applies to general stochastic processes and handles stopping times, is given by \citet[Theorem 4]{chugg2023unified}. \ifarxiv See also \citet{flynn2023improved} for a similar statement. \fi

\begin{proposition}[Variational Template]
\label{prop:variational_template}
Let $\Theta$ be a measurable parameter space. For each $\theta\in\Theta$, let $Z(\theta) \equiv (Z_t(\theta))_{t\geq 0}$ be a stochastic process upper bounded by a nonnegative supermartingale $L(\theta)\equiv (L_t(\theta))_{t\geq 0}$. Assume that all processes are adapted to the same filtration $\calF$ and that $Y_0(\theta)\leq 1$ for each $\theta\in\Theta$. 
Let $\nu$ be a data-free distribution over $\Theta$.  
Then, for all $\delta\in(0,1)$, with probability $1-\delta$, for any $\calF$-adapted stopping time $\tau$ and any $\calF_\tau$-measurable distribution $\rho_\tau$ over $\Theta$, 
\ifarxiv
\begin{equation}
    \int \log Z_\tau (\theta) \rho_\tau(\d\theta) \leq \kl(\rho_\tau\|\nu) + \log(1/\delta).
\end{equation}
\else 
\[\int \log Z_\tau (\theta) \rho_\tau(\d\theta) \leq \kl(\rho_\tau\|\nu) + \log(1/\delta).\]\fi 
\end{proposition}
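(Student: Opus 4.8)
The plan is to reduce the statement to the ordinary Ville inequality applied to a single mixture supermartingale, using the Donsker--Varadhan variational formula to convert the data-dependent posterior $\rho_\tau$ into a KL penalty against the data-free prior $\nu$. First I would build the mixture process $\overline{Y}_t := \int_\Theta Y_t(\theta)\,\nu(\d\theta)$. Since each $Y(\theta)$ is a nonnegative supermartingale adapted to $\calF$ and $\nu$ is data-free (hence the integration commutes with conditional expectation by Tonelli/Fubini for nonnegative integrands), $\overline{Y}$ is itself a nonnegative supermartingale, and $\overline{Y}_0 = \int Y_0(\theta)\,\nu(\d\theta) \leq 1$. By Ville's inequality, $\Pr(\exists t: \overline{Y}_t \geq 1/\delta) \leq \delta$; equivalently, on an event $\calE$ of probability at least $1-\delta$ we have $\overline{Y}_t < 1/\delta$ for all $t$ simultaneously, and in particular $\overline{Y}_\tau \leq 1/\delta$ at every $\calF$-adapted stopping time $\tau$ (using that $\tau$ is a.s.\ finite-valued in discrete time, or the standard optional-stopping extension for nonnegative supermartingales otherwise).

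The second step is to invoke the Donsker--Varadhan change-of-measure inequality pointwise on $\calE$: for any measurable function $h:\Theta\to\Re$ and any probability measure $\rho_\tau$ on $\Theta$,
\begin{equation}
\int h(\theta)\,\rho_\tau(\d\theta) \leq \kl(\rho_\tau\|\nu) + \log \int e^{h(\theta)}\,\nu(\d\theta).
\end{equation}
I would apply this with $h(\theta) = \log Z_\tau(\theta)$ (interpreting $\log 0 = -\infty$, which only helps the inequality), so that $e^{h(\theta)} = Z_\tau(\theta) \leq Y_\tau(\theta)$. Monotonicity of the integral then gives $\log\int e^{h(\theta)}\,\nu(\d\theta) \leq \log\overline{Y}_\tau \leq \log(1/\delta)$ on $\calE$. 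Chaining the two displays yields
\begin{equation}
\int \log Z_\tau(\theta)\,\rho_\tau(\d\theta) \leq \kl(\rho_\tau\|\nu) + \log(1/\delta)
\end{equation}
on $\calE$, which is exactly the claim. Note the bound holds for \emph{every} $\calF_\tau$-measurable $\rho_\tau$ simultaneously on the same event, since $\calE$ was fixed before choosing $\rho_\tau$; this is the crucial payoff of the variational route over a direct mixture argument.

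The main subtlety — rather than a genuine obstacle — is measurability and the handling of $\rho_\tau$ being data-dependent and $\calF_\tau$-measurable: one must ensure the map $(\omega,\theta)\mapsto \log Z_\tau(\theta)(\omega)$ is jointly measurable so the inner integral defines an $\calF_\tau$-measurable random variable, and that the Donsker--Varadhan inequality can be applied $\omega$-by-$\omega$ on $\calE$. This is routine under the stated standing assumption that $\Theta$ is a measurable parameter space and all processes are $\calF$-adapted. A second minor point is justifying the supermartingale property of $\overline{Y}$ and the use of Ville's inequality at a stopping time: for nonnegative supermartingales in discrete time this is standard, and the excerpt's Proposition is stated in that setting, so one can cite the corresponding result (e.g.\ \citet{chugg2023unified}) or give the two-line Fubini-plus-Ville argument. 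Everything else is bookkeeping; there is no delicate estimate to grind through.
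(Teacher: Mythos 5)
Your proof is correct and is essentially the standard argument behind this template: the paper does not reprove it but cites \citet[Theorem 4]{chugg2023unified}, whose proof is exactly your route of forming the prior mixture $\int Y_t(\theta)\,\nu(\d\theta)$, applying Ville's inequality to that nonnegative supermartingale, and then invoking Donsker--Varadhan with $h=\log Z_\tau$ pointwise on the good event. Your handling of the stopping time and of the $\calF_\tau$-measurable posterior matches that treatment, so there is nothing further to add.
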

Unlike approaches to multivariate concentration based on covering arguments or chaining, the variational approach can lead to dimension-free bounds if one is careful in their choice of $\rho_\tau$ and $\nu$. 
In our case the KL-divergence will typically act as $\log \det (V_\tau)$, which is how we obtain dimension-free, determinant-based bounds. 

\textbf{Notation.} We denote the eigenvalues of a matrix $M$ as $\gamma_{\min}(M) = \gamma_1(M)\leq \dots\leq \gamma_d(M) = \gamma_{\max}(M)$. We let $\|M\|_{\op} = \gamma_{\max}(M)$ be the operator norm of $M$. 
\ifarxiv For a convex function $f:I\to\Re$, $I\subset\Re$, we denote its convex conjugate (Legendre–Fenchel transform) by $f^*(y) = \sup_{x\in I}(xy - f(x))$, which is also a convex function.  
We let $\mathbb{N}_0 = \{0,1,2,\dots\}$ denote the  natural numbers including zero and $\Re_{>0}$ denote the positive real numbers. \fi We use $A\succeq B$ to signify that $A$ is greater than $B$ in the Loewner order, i.e., $A-B$ is positive semidefinite. For a vector $v$ and positive semidefinite matrix $A$, $\|v\|_A$ denotes the norm defined as  $\|v\|_A^2 = \la v,Av\ra$, where $\la \cdot,\cdot\ra$ is the usual dot product in $\Re^d$ unless otherwise specified. We let $a\vee b = \max\{a,b\}$ and  use $A^\intercal$ to refer to the transpose of the matrix $A$. \ifarxiv Throughout, the indices on the sum $\sum_{j\leq t}$ should be assumed to run from 1 to $t$. \fi

\ifarxiv
\section{Warmup: Sub-Gaussian Processes}
\label{sec:sub-gaussian}

Let us begin by considering the special case of sub-Gaussian processes\footnote{It's perhaps worth mentioning ``sub-Gaussian process'' has a double meaning in the literature. In this paper, a sub-Gaussian process refers to Definition~\ref{def:sub-psi} with $\psi = \psi_N$. This is different than the sub-Gaussian process studied by \citet{talagrand2014upper}, which is a collection of zero-mean random variable $\{Y_\theta: \theta\in\Theta\}$ such that $\E[\exp(\lambda(X_\theta - X_\phi))]\leq \exp(\psi_N(\lambda)d^2(\theta, \phi))$ for some metric $d$ and all $\theta, \phi\in\Theta$.} 
. That is, we assume that $\psi(\lambda) = \psi_N(\lambda) = \lambda^2/2$. This allows for a particularly clean application of the variational template, since we may take $\Theta=\Re^d$ as our parameter space. (For $\psi=\psi_N$, one can replace $\dsphere$ with $\Re^d$.) This in turn enables the use of normal distributions in Proposition~\ref{prop:variational_template}. 

We emphasize that the following result is only a very modest generalization of known bounds. More specifically, it is the same bound of \citet{abbasi2011improved}, though in a slightly more general setting. In fact, the techniques of Abbasi-Yadkori et al.\ (i.e., the method of mixtures) can also be used to obtain the following bound. This is done in Appendix~\ref{app:mom-for-subg}. 

The result is of interest not for its novelty, but because it demonstrates that the variational approach can recover the same bound with the same constants as the method of mixtures. \citet{ziemann2025vector} recently made a similar observation, though he focuses on the bandit setting in particular. The proof can be found in Appendix~\ref{proof:sub-gaussian}. 

\begin{theorem}
\label{thm:sub-gaussian}
    Let $(S_t,V_t)$ be a sub-Gaussian process. Let $U_0$ be a fixed positive-definite matrix. Then, with probability $1-\delta$, for any stopping time $\tau$, 
    \begin{equation}
    \label{eq:subGaussian_bound}
    \|S_\tau\|_{(V_\tau + U_0)^{-1}}^2 \leq \log\left(\frac{\det (V_\tau + U_0) }{\det  U_0 }\right) + 2\log(1/\delta). 
\end{equation} 
\end{theorem}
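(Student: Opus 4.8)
The plan is to apply the variational template (Proposition~\ref{prop:variational_template}) with parameter space $\Theta = \Re^d$, using Gaussian distributions for both the prior $\nu$ and the data-dependent posterior $\rho_\tau$. For the per-$\theta$ process, take
\[
Z_t(\theta) = \exp\left\{ \la \theta, S_t\ra - \tfrac{1}{2}\la \theta, V_t\theta\ra - \tfrac{1}{2}\la\theta, U_0\theta\ra\right\}.
\]
First I would check that $Z_t(\theta)$ is upper bounded by a nonnegative supermartingale with $Z_0(\theta)\le 1$: since $(S_t,V_t)$ is sub-Gaussian, Definition~\ref{def:sub-psi} (applied in the direction $\theta/\|\theta\|$ with $\lambda = \|\theta\|$, using $\psi_N$ so that $\psi_N(\lambda)\la\theta/\|\theta\|,V_t\theta/\|\theta\|\ra = \tfrac12\la\theta,V_t\theta\ra$) gives that $\exp\{\la\theta,S_t\ra - \tfrac12\la\theta,V_t\theta\ra\} \le L_t(\theta)$ for a nonnegative supermartingale $L_t(\theta)$; multiplying by the constant $\exp\{-\tfrac12\la\theta,U_0\theta\ra\}\le 1$ preserves this and ensures the value at $t=0$ is $\le 1$ (note $S_0$ should be taken to be $0$, or one absorbs $S_0$ into $V_0$).

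Next I would choose the prior $\nu = \mathcal{N}(0, U_0^{-1})$ and, given the data at the stopping time $\tau$, the posterior $\rho_\tau = \mathcal{N}(\mu_\tau, (V_\tau+U_0)^{-1})$ where $\mu_\tau = (V_\tau+U_0)^{-1}S_\tau$ is the natural ``ridge-regression'' center. The key computation is then evaluating the two sides of the variational bound. For the left side, $\int \log Z_\tau(\theta)\,\rho_\tau(\d\theta) = \la \mu_\tau, S_\tau\ra - \tfrac12\E_{\theta\sim\rho_\tau}\la\theta,(V_\tau+U_0)\theta\ra$, and the expectation of the quadratic form under a Gaussian with covariance $(V_\tau+U_0)^{-1}$ splits into a mean part $\la\mu_\tau,(V_\tau+U_0)\mu_\tau\ra = \la\mu_\tau,S_\tau\ra = \|S_\tau\|^2_{(V_\tau+U_0)^{-1}}$ plus a trace part $\Tr((V_\tau+U_0)^{-1}(V_\tau+U_0)) = d$. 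This yields $\int\log Z_\tau\,\d\rho_\tau = \tfrac12\|S_\tau\|^2_{(V_\tau+U_0)^{-1}} - \tfrac{d}{2}$. For the right side, the KL divergence between two Gaussians $\mathcal{N}(\mu_\tau,(V_\tau+U_0)^{-1})$ and $\mathcal{N}(0,U_0^{-1})$ is $\tfrac12\big[\Tr(U_0(V_\tau+U_0)^{-1}) - d + \log\tfrac{\det(V_\tau+U_0)}{\det U_0} + \la\mu_\tau, U_0\mu_\tau\ra\big]$.

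The final step is to combine: Proposition~\ref{prop:variational_template} gives $\tfrac12\|S_\tau\|^2_{(V_\tau+U_0)^{-1}} - \tfrac{d}{2} \le \kl(\rho_\tau\|\nu) + \log(1/\delta)$, and I would check that the $-d/2$ on the left exactly cancels the $-d/2$ inside the KL, that $\Tr(U_0(V_\tau+U_0)^{-1}) \le d$ (since $U_0 \preceq V_\tau + U_0$ implies the matrix is $\preceq I_d$, so its trace is at most $d$ — this term can simply be dropped), and that the stray $\la\mu_\tau,U_0\mu_\tau\ra$ term is nonnegative and hence also droppable. Multiplying through by $2$ then delivers~\eqref{eq:subGaussian_bound}. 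The main obstacle is purely bookkeeping: making sure the dimension-dependent terms ($-d$ from the log-$Z$ quadratic form and $-d$ inside the KL) cancel precisely, and verifying that every discarded term has the correct sign so the inequality only loosens. There is no deep difficulty — the cleverness is entirely in the choice of posterior covariance $(V_\tau+U_0)^{-1}$, which is what makes the trace terms telescope and renders the bound dimension-free.
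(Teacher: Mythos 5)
Your setup (variational template over $\Theta=\Re^d$, Gaussian prior $\nu=\mathcal{N}(0,U_0^{-1})$, Gaussian posterior with mean $(V_\tau+U_0)^{-1}S_\tau$ and covariance $(V_\tau+U_0)^{-1}$, and the rescaling argument extending the sub-$\psi_N$ condition from $\dsphere$ to $\Re^d$) is exactly the paper's, specialized to $\lambda=1$. But the final bookkeeping step has a genuine sign error that breaks the proof. After combining your two computations, the inequality reads
\begin{equation*}
\tfrac12\|S_\tau\|^2_{(V_\tau+U_0)^{-1}} - \tfrac d2 \;\le\; \tfrac12\Tr\big(U_0(V_\tau+U_0)^{-1}\big) - \tfrac d2 + \tfrac12\log\tfrac{\det(V_\tau+U_0)}{\det U_0} + \tfrac12\la\mu_\tau,U_0\mu_\tau\ra + \log(1/\delta).
\end{equation*}
The terms $\tfrac12\Tr(U_0(V_\tau+U_0)^{-1})$ and $\tfrac12\la\mu_\tau,U_0\mu_\tau\ra$ are \emph{nonnegative terms on the upper-bound side}: discarding them tightens the inequality rather than loosening it, so they cannot be dropped. (You also cannot use the $-d/2$ twice: either it cancels the $-d/2$ on the left, or it absorbs the trace term via $\Tr(U_0(V_\tau+U_0)^{-1})\le d$, but not both.) As written, your argument only yields the weaker bound with these extra additive terms, and the trace term can be as large as $d$, so the result is not even dimension-free.

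The root cause is folding the factor $\exp\{-\tfrac12\la\theta,U_0\theta\ra\}\le 1$ into $Z_t(\theta)$. That makes $Z$ smaller and throws away exactly the terms needed for cancellation. The paper instead takes $Z_t(\theta)=\exp\{\lambda\la\theta,S_t\ra-\psi_N(\lambda)\la\theta,V_t\theta\ra\}$ (penalizing only $V_t$, not $V_t+U_0$). Then, with the same posterior mean $\mu_\rho=\hatV_\tau^{-1}S_\tau$ where $\hatV_\tau=V_\tau+U_0$, one has $\la\mu_\rho,V_\tau\mu_\rho\ra=\|S_\tau\|^2_{\hatV_\tau^{-1}}-\la\mu_\rho,U_0\mu_\rho\ra$ and $\Tr(V_\tau\Sigma_\rho)=d-\Tr(U_0\Sigma_\rho)$, so the quantities $\tfrac12\la\mu_\rho,U_0\mu_\rho\ra$ and the trace piece appear with a \emph{positive} sign on the left of the Donsker--Varadhan inequality and cancel the matching KL terms exactly (this is why the prior covariance is chosen as $(2\psi_N(\lambda)U_0)^{-1}$ and the posterior covariance as $(2\psi_N(\lambda)\hatV_\tau)^{-1}$); finally one maximizes $\lambda-\psi_N(\lambda)$ at $\lambda^*=1$. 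If you simply remove the $-\tfrac12\la\theta,U_0\theta\ra$ factor from your $Z$ and redo the same algebra, your choices of prior and posterior recover the theorem exactly; with it, they cannot.
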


To give a sense of how Proposition~\ref{prop:variational_template} is used to prove Theorem~\ref{thm:sub-gaussian}, let us give a brief outline of the proof here. 

If $(S_t,V_t)$ is a sub-Gaussian process, then it follows that  the process $(N_t(\theta))$ with $N_t(\theta) = \exp\{\lambda \la \theta, S_t\ra - \psi_N(\lambda) \la \theta, V_t\theta\ra\}$ is upper bounded by a supermartingale for all $\theta\in \Re^d$ (not only for every $\theta\in\dsphere$; this is because $\lambda_{\max}=\infty$). We may therefore apply Proposition~\ref{prop:variational_template} with $\Theta=\Re^d$. In particular, we take $\rho_\tau$ to be Gaussian with mean $(V_\tau + U_0)^{-1}S_\tau$ and covariance $(2 \psi_N(\lambda))^{-1}(V_\tau + U_0)^{-1}$, and $\nu$ to be Gaussian with mean 0 and covariance $(2\psi_N(\lambda))^{-1} U_0^{-1}$. (Recall that $\rho$ can be data-dependent while $\nu$ must be data-free.) Then $\int \log N_t(\theta)\d\rho_\tau = (\lambda - \psi_N(\lambda))\|S_\tau\|^2_{(V_\tau + U_0)^{-1}} + \text{(extra terms)}$ and $\kl(\rho_\tau\|\nu) = \frac{1}{2}\log(\det(V_\tau + U_0)/\det U_0) + \text{(the same extra terms)}$. Rearranging and taking $\lambda=1$ gives the result.  

Arithmetic aside, the crux of the argument is noticing that we may work with $\Theta=\Re^d$ instead of $\Theta=\dsphere$, enabling us to choose $\rho$ and $\nu$ as Gaussians.  This strategy unfortunately won't be available to us for more general sub-$\psi$ distributions, which makes those settings more challenging. We will end up using uniform distributions instead. 

Theorem~\ref{thm:sub-gaussian} recovers the bound of \citet{abbasi2011improved} if  $(X_t)_{t\geq 1}\subset\Re^d$ is an $\calF$-predictable process ($X_t$ is the $t$-th action taken by a bandit algorithm in their case) and $S_t = \sum_{k=1}^t \eta_k X_k$ where $\eta_k$ is scalar $\sigma$-sub-Gaussian noise.
Then $(S_t)$ is sub-$\psi_N$ with variance proxy $(V_t)$ defined as $V_t = \sigma^2\sum_{k=1}^t X_kX_k^\intercal$. Taking $U_0 = \sigma^2 V_0$ for some $V_0$ and rearranging~\eqref{eq:subGaussian_bound} gives Theorem 1 of \citet{abbasi2011improved}. More generally, Theorem~\ref{thm:sub-gaussian} applies to any sub-Gaussian process described in Section~\ref{sec:prelims}, such as $S_t = \sum_{k\leq t} X_k$ and $V_t = \sum_{k\leq t} X_kX_k^\intercal$ when $X_t$ is conditionally symmetric. 

While Theorem~\ref{thm:sub-psi} is stated in terms of stopping times, it can equivalently be stated as a time-uniform bound of the form: $P(\forall t\geq 1: \eqref{eq:subGaussian_bound}\text{ holds})\geq 1-\delta$. In this paper we state our bounds in terms of stopping times to conform to previous work in this area.

\fi 

\section{General \texorpdfstring{Sub-$\psi$}{Sub-psi} Processes}
\label{sec:super-gaussian}

\ifarxiv
Theorem~\ref{thm:sub-gaussian} relied on transforming the sub-$\psi$ condition from one which holds over $\dsphere$ to one which holds over $\Re^d$, thus enabling us to take $\Theta=\Re^d$ and use normal distributions in Proposition~\ref{prop:variational_template}. This trick cannot be performed for general sub-$\psi$ processes because (in their definition) $\lambda_{\max}$ may be finite. However, for general $\psi$ functions we may replace $\dsphere$ with $\dball$, the unit ball in $\Re^d$. More specifically, we can upper bound any CGF-like $\psi$ with some super-Gaussian $\widehat{\psi}$ that allows us to replace $\dsphere$ with $\dball$;
see Remark~\ref{rem:super-gaussian} for further discussion. This allows us to use nested uniform distributions over $\Theta = \dball$ in Proposition~\ref{prop:variational_template}.

The machinery we use to handle general  sub-$\psi$ processes does have drawbacks over what was used in Section~\ref{sec:sub-gaussian} to study the specific case of sub-Gaussian processes. For one, when we apply the results in this section to sub-Gaussian processes, the resulting bound is looser than Theorem~\ref{thm:sub-gaussian} (see Figure~\ref{fig:gt_and_subg_boundaries}). 
Second, the parameter $\lambda$ of the sub-Gaussian process in Theorem~\ref{thm:sub-gaussian} can be optimized independently of other terms in the bound. Such closed-form optimization of $\lambda$ is not possible in the more general case. We instead end up with \emph{line-crossing inequalities:} for any given $\lambda$, we obtain a bound on the probability that $\|S_t\|_{V_t^{-1}}$ ever crosses a threshold parameterized by $\lambda$. Different (predetermined) values of $\lambda$ result in a bound which is tighter at different (predetermined) values of $\det V_t$. Theorem~\ref{thm:sub-psi} gives the line-crossing inequality for general sub-$\psi$ processes.

Line-crossing inequalities are common in works studying the concentration of sub-$\psi$ processes~\citep{howard2020time,howard2021time}. 
To go from line-crossing inequalities to bounds which are tight at all (intrinsic) times, we deploy a technique which has come to be known as ``stitching'' (see Section~\ref{sec:super-gaussian-stitching} for more background). 
We provide two kinds of stitched bounds: One for general sub-$\psi$ processes, and one specifically tailored for sub-Gamma processes. This will be the goal of Section~\ref{sec:super-gaussian-stitching}. 
\fi 

\subsection{A line-crossing inequality for sub-$\psi$ processes}

We must define two quantities before stating our first result. First, for nonzero positive definite matrices $M_1$ and $M_2$, define their \emph{Rayleigh-Ritz quotient}\footnote{This is also the maximum over the generalized Rayleigh coefficient between $M_1$ and $M_2$, and the maximum solution to the generalized eigenvalue problem $M_1x = \lambda M_2 x$.} 
\begin{equation}
\label{eq:rayleigh}
    \alpha(M_1,M_2) \equiv \sup_{\theta\in\Re^d} \frac{\la \theta, M_1\theta\ra}{\la \theta, M_2\theta\ra}.
\end{equation}
When using the Rayleigh-Ritz quotient hereafter, we will always consider matrices $M_1,M_2$ such that $M_2\succeq M_1$, thus implying that 
\ifarxiv
$$0\leq \alpha(M_1,M_2) \leq 1.$$ 
\else 
$0\leq \alpha(M_1,M_2) \leq 1.$
\fi 
Next, given a sub-$\psi$ process $(S_t,V_t)$ and a PSD matrix $U_0$, let 
\begin{equation}
  \alpha_t = \alpha(U_0,U_0 + V_t),  
\end{equation}
and for $t\geq 1$ define the function $g_{\psi,t}:\im(\psi) \to [0,1]$ by 
\begin{equation}
    \label{eq:gt}
        g_{\psi,t}(z) = \sqrt{\psi^{-1}(z)} - \sqrt{\alpha_t \psi^{-1}(z)}.
\end{equation}
Clearly, $g_{\psi,t}(z) \to \sqrt{\psi^{-1}(z)}$ as $\alpha_t\to 0$, which occurs for example if $\gamma_{\min}(V_t)\to \infty$. 
For most common sub-$\psi$ processes and matrices $V_t$ which have at least some spread in all directions, $g_{\psi,t}(z)$ is rapidly approaching $\sqrt{\psi^{-1}(z)}$ after 100 samples  for all $\lambda \in \im(\psi)$; see Figure~\ref{fig:gt_and_subg_boundaries}.  We now present the line-crossing inequality for sub-$\psi$ processes. The proof is in Appendix~\ref{proof:sub-psi}.  

\begin{theorem}
\label{thm:sub-psi}
    Let $(S_t,V_t)$ be a sub-$\psi$ process such that $\lambda\mapsto \psi(\lambda)/\lambda^2$ is nondecreasing. Let $U_0$ be positive definite and define the Rayleigh-Ritz quotient 
    $\alpha_t = \alpha(U_0, U_0 + V_t)$. 
    Then, for any $\delta\in(0,1)$ and any $\lambda\in[\psi^{-1}(1/\gamma_{\min}(U_0)),\lambda_{\max})\cap \im(\psi)$, with probability $1-\delta$, for any stopping time $\tau$, 
    \begin{equation}
    \label{eq:sub-psi-bound}
         \|S_\tau\|_{(V_\tau + U_0)^{-1}} \leq \frac{1}{\lambda g_{\psi,\tau}(\lambda)}\left(\frac{1}{2}\log\left(\frac{\det (V_\tau + U_0)}{\det U_0}\right) + 1  + \log(1/\delta)\right) + \frac{g_{\psi,\tau}(\lambda) \psi(\lambda)}{\lambda}. 
    \end{equation}
\end{theorem}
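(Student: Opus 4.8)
The plan is to apply the Variational Template (Proposition~\ref{prop:variational_template}) with parameter space $\Theta = \dball$, the unit ball, using the super-Gaussianity hypothesis to first upgrade the sub-$\psi$ condition from one that holds on $\dsphere$ to one that holds on all of $\dball$. Concretely, for a fixed $\lambda$, set $Z_t(\theta) = \exp\{\lambda\la\theta, S_t\ra - \psi(\lambda)\la\theta, V_t\theta\ra\}$; since $\|\theta\|\le 1$ and $\psi(\mu)/\mu^2$ is nondecreasing, a rescaling argument (replace $\theta$ by $\|\theta\|\cdot(\theta/\|\theta\|)$ and absorb the factor into the $\lambda$ inside $\psi$) shows $Z_t(\theta)$ is dominated by the supermartingale $L_t(\theta/\|\theta\|)^{\lambda\|\theta\|}$ — this is the content of Remark~\ref{rem:super-gaussian} as invoked in the text. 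Thus each $Z(\theta)$ is bounded by a nonnegative supermartingale with $Z_0(\theta)\le 1$, so the template applies.

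Next I would choose the data-dependent posterior $\rho_\tau$ and the prior $\nu$. The natural choice is to take $\nu$ uniform on a ball of radius $r$ (or a nested/truncated uniform as the text hints — ``nested uniform distributions'') and $\rho_\tau$ uniform on a small ball of radius $\beta$ centered at the data-dependent point $\hat\theta_\tau = c\, V_\tau^{-1} S_\tau$ for a scalar $c$ to be optimized; one truncates to stay inside $\dball$, which is where the condition $V_t\succeq U_0$ and the eigenvalue constraint $\lambda\ge\psi^{-1}(1/\gamma_{\min}(U_0))$ enter — they guarantee the center and its neighborhood lie in $\dball$. Then: (i) the KL term $\kl(\rho_\tau\|\nu)$ becomes $d\log(r/\beta)$ up to truncation corrections, but a more careful ellipsoidal choice of $\rho_\tau$ (covariance proportional to $V_\tau^{-1}$) makes the volume ratio collapse to $\tfrac12\log(\det V_\tau/\det U_0) + O(1)$, which is exactly the determinant term with no free $d$; (ii) the left-hand integral $\int \log Z_\tau(\theta)\rho_\tau(d\theta)$ evaluates, by linearity, to $\lambda\la\bar\theta, S_\tau\ra - \psi(\lambda)\,\E_{\rho_\tau}\la\theta, V_\tau\theta\ra$ where $\bar\theta$ is the mean of $\rho_\tau$; choosing the radius $\beta$ and center appropriately, the linear part produces a term proportional to $\lambda\,g_{\psi,\tau}(\lambda)\,\|S_\tau\|_{V_\tau^{-1}}\,\|U_0\|_\op^{1/2}$ and the quadratic part produces the subtracted $\psi(\lambda)$-term; the function $g_{\psi,\tau}$ is precisely the factor that records how much of the ball of integration fits inside $\dball$ given $\alpha_\tau$.

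Putting these together, the template inequality reads (schematically)
\[
\lambda\, g_{\psi,\tau}(\lambda)\,\|U_0\|_\op^{-1/2}\,\|S_\tau\|_{V_\tau^{-1}} - \frac{g_{\psi,\tau}(\lambda)^2\psi(\lambda)}{\|U_0\|_\op} \;\le\; \tfrac12\log\!\Big(\tfrac{\det V_\tau}{\det U_0}\Big) + 1 + \log(1/\delta),
\]
and solving for $\|S_\tau\|_{V_\tau^{-1}}$ gives exactly \eqref{eq:sub-psi-bound}. The stopping-time and time-uniformity aspects are inherited directly from Proposition~\ref{prop:variational_template}, so no extra optional-stopping argument is needed.

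The main obstacle I anticipate is step (i)–(ii) done \emph{simultaneously and sharply}: one must pick the shape of $\rho_\tau$ (its center, its radius, possibly an anisotropic covariance aligned with $V_\tau^{-1}$) so that the KL term is exactly $\tfrac12\log\det(V_\tau U_0^{-1})$ up to an additive constant \emph{while} the integrated linear term retains the clean factor $g_{\psi,\tau}(\lambda)$ and the integrated quadratic term does not blow up. Getting the $g_{\psi,\tau}$ function to emerge — rather than a cruder $\alpha_\tau$-dependent constant — requires carefully tracking the geometry of intersecting the posterior ball with $\dball$ and optimizing the free radius; this is where the definition \eqref{eq:gt} of $g_{\psi,t}$, with its $\sqrt{\alpha_t\psi^{-1}(z)+1-\alpha_t} - \sqrt{\alpha_t\psi^{-1}(z)}$ structure, will have to be reverse-engineered from the optimization. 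The rest (bounding KL for truncated uniforms, verifying the supermartingale property, rearranging the final quadratic-in-norm inequality) is routine.
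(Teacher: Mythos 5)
Your plan is exactly the paper's route: extend the sub-$\psi$ domination from $\dsphere$ to $\dball$ via super-Gaussianity, apply Proposition~\ref{prop:variational_template} with $\Theta=\dball$, take a uniform posterior on an ellipsoid centered at a multiple of $V_\tau^{-1}S_\tau$ with shape $\propto V_\tau^{-1}$ (scaled so the quadratic trace term is at most $1$, giving the ``$+1$''), take a uniform prior on the ellipsoid $\{\theta:\psi(\lambda)\la\theta,U_0\theta\ra\le 1\}$, and read off the KL as the log volume ratio $\tfrac12\log(\det V_\tau/\det U_0)$. However, you leave the central quantitative step unproved, and you misplace the geometric constraint that generates it. The KL between two uniforms is the log volume ratio only when the posterior ellipsoid is \emph{nested inside the prior ellipsoid} $\calE_\nu$ (otherwise the KL is infinite); ``truncating to stay inside $\dball$'' is not the right condition and would destroy the clean volume-ratio computation. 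The eigenvalue constraint $\lambda\ge\psi^{-1}(1/\gamma_{\min}(U_0))$ only ensures $\calE_\nu\subset\dball$; the function $g_{\psi,\tau}$ does not come from intersecting the posterior with $\dball$ but from choosing the center scaling $\beta$ in $\mu_\rho=\beta V_\tau^{-1}S_\tau$ as large as possible subject to $\calE_\rho\subset\calE_\nu$. In the paper this is done by bounding $(\theta+\beta y)^\intercal U_0(\theta+\beta y)$ with Young's inequality and the Rayleigh--Ritz coefficient $\alpha_\tau$, optimizing the Young parameter, and solving a quadratic in $\beta$, which yields $\beta^*=g_{\psi,\tau}(\lambda)/\|V_\tau^{-1}S_\tau\|_{U_0}$; one then passes to the implementable choice $\beta^{**}=g_{\psi,\tau}(\lambda)/(\|U_0\|_\op^{1/2}\|S_\tau\|_{V_\tau^{-1}})$ using $\|V_\tau^{-1}S_\tau\|_{U_0}\le\|U_0\|_\op^{1/2}\|S_\tau\|_{V_\tau^{-1}}$ and monotonicity in $\beta$, which is exactly where the factors $\|U_0\|_\op^{\pm 1/2}$ in \eqref{eq:sub-psi-bound} come from.

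Since you explicitly defer this derivation (``$g_{\psi,\tau}$ \dots will have to be reverse-engineered from the optimization''), the proposal establishes the scaffolding but not the theorem: without the nesting condition and the $\beta$-optimization, you have no justification for the specific linear coefficient $\lambda g_{\psi,\tau}(\lambda)\|U_0\|_\op^{-1/2}$ nor for the subtracted term $g_{\psi,\tau}^2(\lambda)\psi(\lambda)/\|U_0\|_\op$, and hence no proof of the stated bound. The fix is mechanical once the constraint is identified correctly: require $\calE_\rho\subset\calE_\nu$, run the Young/Rayleigh--Ritz computation above to produce $g_{\psi,\tau}$ as in \eqref{eq:gt}, and then rearrange $(\lambda\beta^{**}-(\beta^{**})^2\psi(\lambda))\|S_\tau\|_{V_\tau^{-1}}^2\le \tfrac12\log(\det V_\tau/\det U_0)+1+\log(1/\delta)$ as you indicated.
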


\begin{remark}
\label{rem:super-gaussian}
    Theorem~\ref{thm:sub-psi} assumes that $\psi$ is super-Gaussian, i.e., that  $\psi(\lambda)/\lambda^2$ is  nondecreasing. This assumption is without loss of generality in the following sense: As we detailed in Section~\ref{sec:prelims}, any $\psi$ function can be upper bounded by a constant times $\psi_{G,c}$, which is super-Gaussian. (In fact, all sub-$\psi$ processes mentioned in Section~\ref{sec:prelims} are super-Gaussian). Thus, Theorem~\ref{thm:sub-psi} may be applied to any sub-$\psi$ process, at the cost of some looseness in constants if $\psi$ is strictly sub-Gaussian (e.g., sub-Bernoulli). 
\end{remark}

Let us provide a sketch of the proof of Theorem~\ref{thm:sub-psi}. \ifarxiv Unlike the sub-Gaussian case (Theorem~\ref{thm:sub-gaussian}), we cannot convert the sub-$\psi$ condition into one that holds for all $\theta\in\Re^d$. Due to the super-Gaussian assumption, however, we can assume that it holds for all $\theta\in\dball$---the unit \emph{ball} as opposed to the unit sphere. 

\else 
Due to the super-Gaussian assumption we can assume that the sub-$\psi$ condition holds for all $\theta\in\dball$---the unit \emph{ball} as opposed to the unit sphere. 
\fi 
We thus apply Proposition~\ref{prop:variational_template} to the process $Z_t(\theta) = \exp\{ \lambda \la \theta, S_t\ra - \psi(\lambda) \la \theta, (V_t + U_0)\theta\ra\}$ for $\theta\in\dball$, where we take distributions $\nu$ and $\rho$ to be uniform over ellipsoids $\calE_\nu$ and $\calE_\rho$ sitting inside $\dball$. The shape (hence volume) of $\calE_\nu$ is a function of $U_0^{-1}$; that of $\calE_\rho$ a function of $(V_\tau + U_0)^{-1}$ (recall that $\rho$ can be $\calF_\tau$-measurable).  
With these choices, integrating out $\int \log Z_\tau(\theta)\d\rho$ and applying Proposition~\ref{prop:variational_template} gives that with probability $1-\delta$, 
\begin{equation*}
    (\lambda \beta - \beta^2 \psi(\lambda)) \|S_\tau\|_{(V_\tau + U_0)^{-1}} \leq \kl(\rho\|\nu) + 1 + \log(1/\delta). 
\end{equation*}
The KL-divergence between $\rho$ and $\nu$ is the ratio of their volumes, which is $\frac{1}{2}\log(\det (V_\tau + U_0)/\det U_0)$, giving rise to the determinant-based bound. 
The parameter $\beta$ is a multipler on the mean of $\calE_\rho$, ensuring that it sits inside $\calE_\nu$. Optimizing $\beta$ gives the final result.

\begin{figure}[t]
  \centering
  \ifarxiv
  \begin{subfigure}[t]{0.45\linewidth}
    \includegraphics[width=\linewidth]{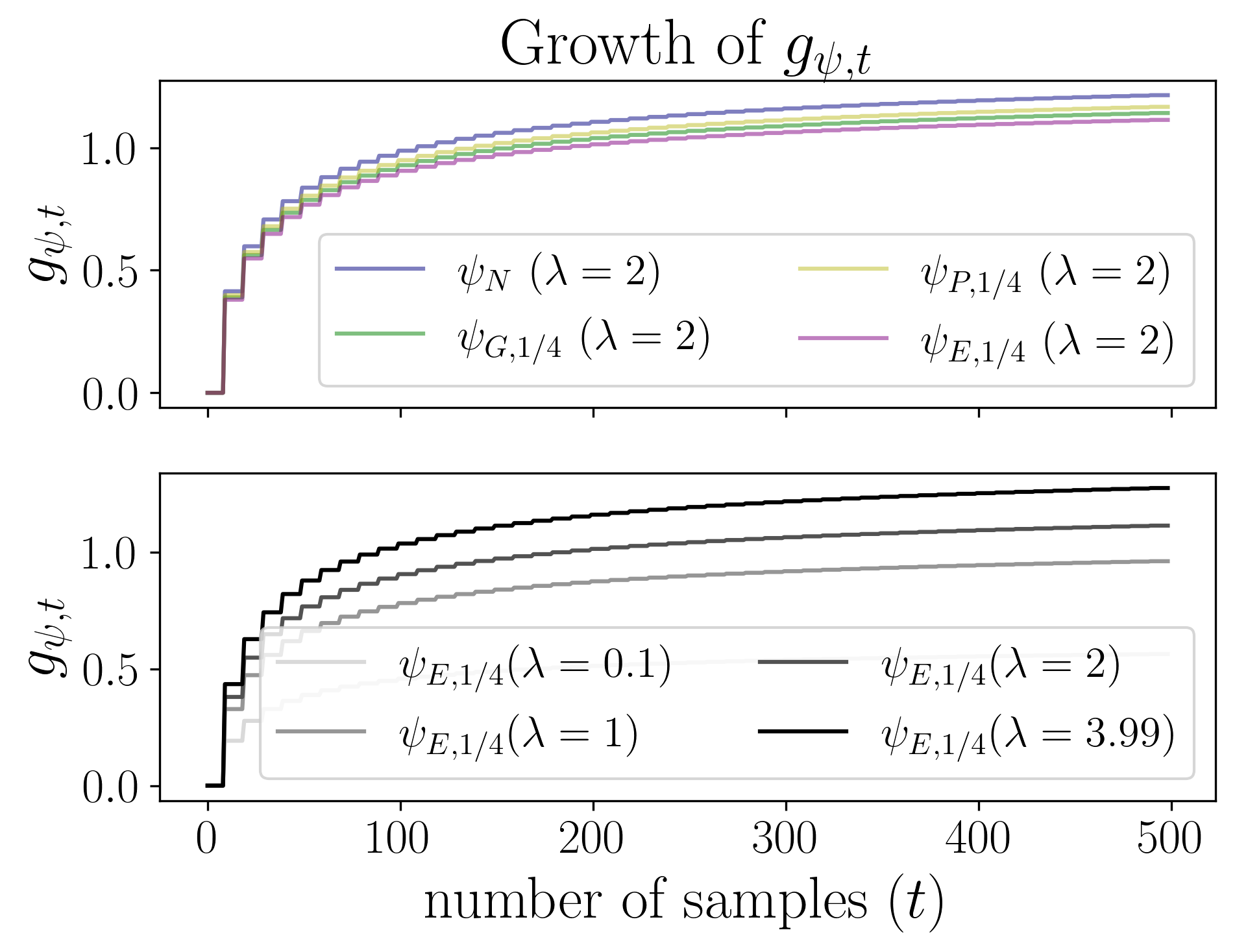}%
  \end{subfigure}
  \begin{subfigure}[t]{0.45\linewidth}
    \includegraphics[width=\linewidth]{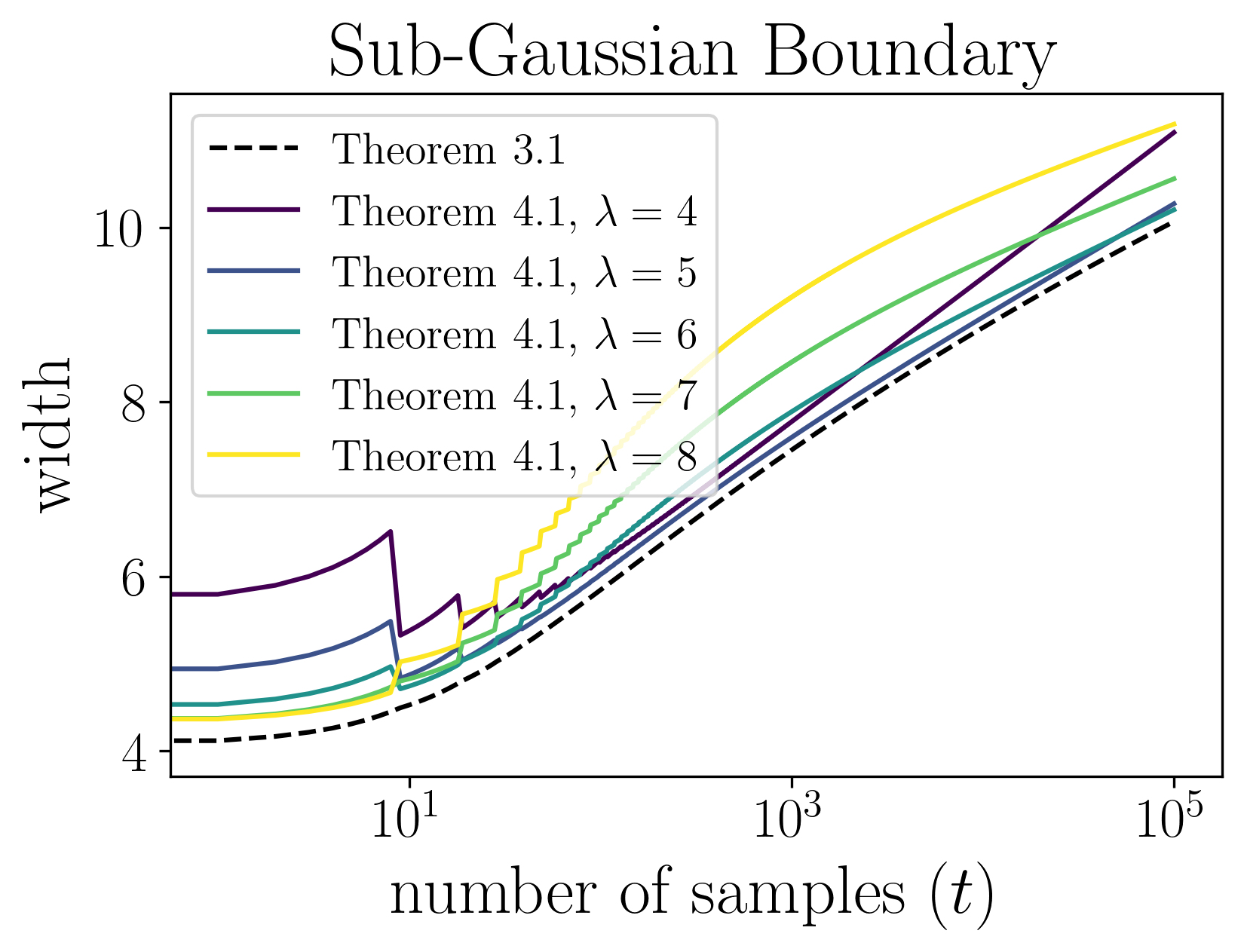}%
  \end{subfigure}
  \else 
  \subfigureplain{\includegraphics[width=0.435\linewidth]{figures/growth_of_gt.png}} 
  \subfigureplain{\includegraphics[width=0.4\linewidth]{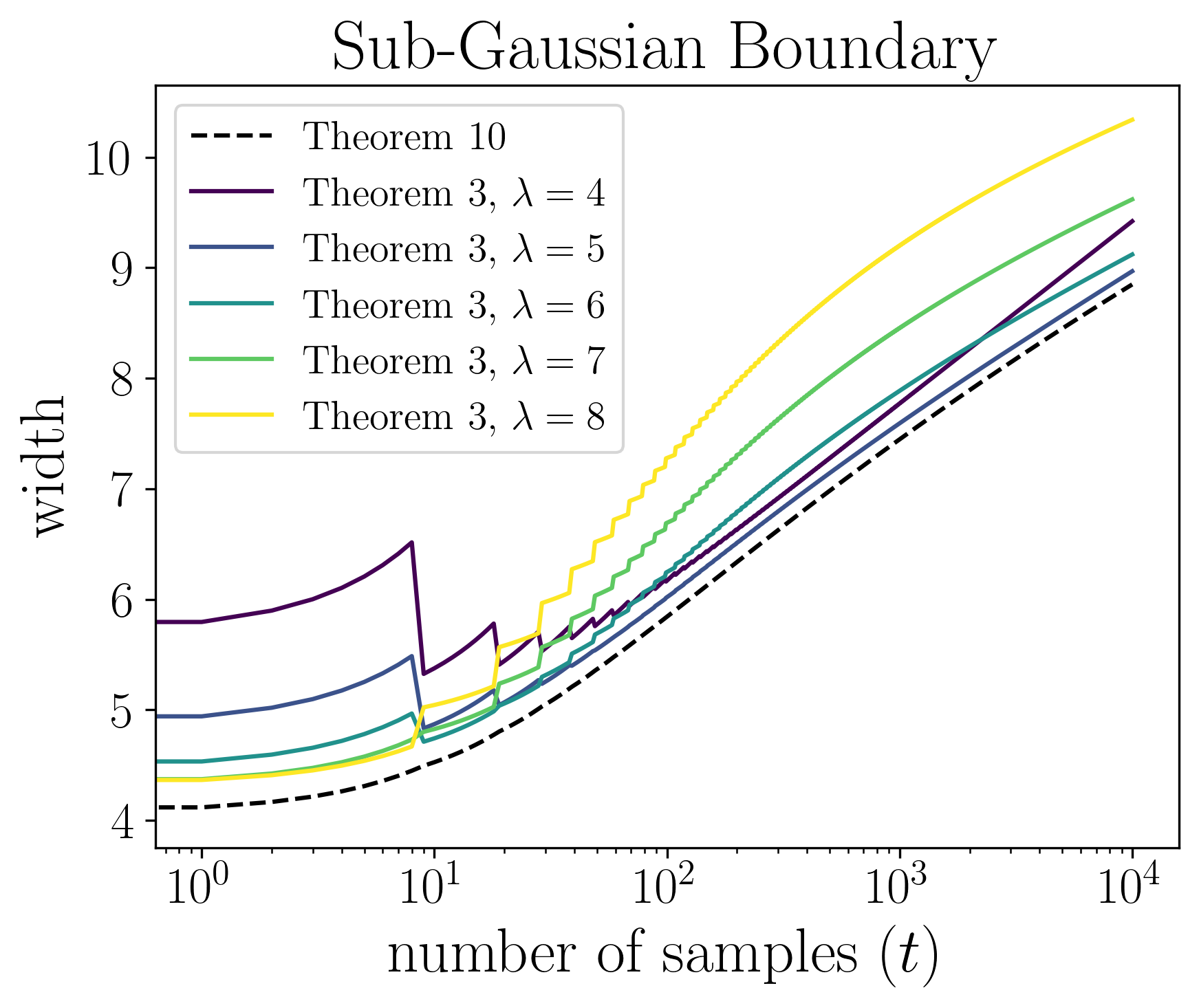}}
  \fi 
  \caption{\emph{Left}: The growth of $g_t $ for various $\psi$ functions and across various values of $\lambda$. For sub-gamma, sub-Poisson, and sub-exponential we fix $c=1/4$. \emph{Right:} A comparison of \ifarxiv Theorem~\ref{thm:sub-gaussian} (dotted black line) \else the sub-Gaussian bound (dotted black line) of \citet{pena2004self,abbasi2011improved} (Theorem~\ref{thm:sub-gaussian}) \fi  and Theorem~\ref{thm:sub-psi} (various colored lines) instantiated for $\psi=\psi_N$. In  both figures we use $U_0 = I_d$ and  $V_t = \sum_{k\leq t}X_s X_s^\intercal$ where the vectors $X_t$ are chosen based on a bandit algorithm. Details may be found in Appendix~\ref{app:experimental-details}.} 
  \label{fig:gt_and_subg_boundaries}
\end{figure}

Several remarks on Theorem~\ref{thm:sub-psi} are in order. 
First, the lower bound on $\lambda$, i.e., $\lambda \geq \psi^{-1}(1/\gamma_{\min}(U_0))$ comes from the proof technique described above. The precise form of $\calE_\nu$ is $\{\theta: \psi(\lambda)\theta^t U_0\theta\leq 1\}$, which we require to be a subset of $\dball$. This holds if $\psi(\lambda) U_0\succeq I_d$, i.e., $\lambda \geq \psi^{-1}(\gamma_{\min}^{-1}(U_0))$. Further, the requirement that $\lambda\leq \lambda_{\max}$ comes from Definition~\ref{def:sub-psi}, and the requirement that $\lambda \in \im(\psi)$ comes from the definition of $g_{\psi,t}$. Note, however, that for common $\psi$ functions including $\psi_E$, $\psi_P$, $\psi_G$ and $\psi_N$, $\im(\psi) = \Re_{\geq 0}$, so requiring that $\lambda \in\im(\psi)$ is often a vacuous constraint. 

\ifarxiv
Second, Theorem~\ref{thm:sub-psi} may appear as though it has the wrong order of magnitude. Indeed, Theorem~\ref{thm:sub-gaussian} is a bound on $\|S_\tau\|_{(V_\tau+U_0)^{-1}}^2$, whereas~\eqref{eq:sub-psi-bound} is a bound on $\|S_\tau\|_{(V_\tau+U_0)^{-1}}$ (without the square). To set ourselves at ease,  let $D_t(\delta) = \frac{1}{2}\log(\det (V_t+U_0)/\det U_0) + 1 + \log(1/\delta)$. The bound in \eqref{eq:sub-psi-bound} reads  
\[\|S_\tau\|_{(V_\tau+U_0)^{-1}} \leq \frac{D_\tau(\delta)}{\lambda g_\tau(\lambda)} + \frac{\lambda g_\tau(\lambda)}{2}.\] 
For a fixed $t$, if $\lambda$ is such that $\lambda g_t(\lambda) = \sqrt{2 D_t(\delta)}$ (which is possible in the sub-Gaussian case since the map $x\mapsto xg_{\psi_N,t}(x)$ is surjective on $\Re_{\geq 0}$),  we obtain $\|S_t\|_{(V_t+U_0)^{-1}}^2 \leq D_t(\delta)$. This matches Theorem~\ref{thm:sub-gaussian} up to an additive factor of 2. However, this only demonstrates that there \emph{exists} a $\lambda$ which makes the widths roughly equal. We cannot in fact choose $\lambda$ such that $\lambda g_t(\lambda) = \sqrt{2 D_t(\delta)}$ since $\lambda$ cannot be data-dependent. Moreover, such a choice of $\lambda$ would only ensure the widths are comparable at a particular fixed time $t$. 
\fi

\begin{remark}
\label{rem:lambda-bounded-from-0}
    That $\lambda$ is bounded away from 0 in Theorem~\ref{thm:sub-psi} may seem like a cause for concern. In many non-self-normalized concentration inequalities (e.g., traditional Hoeffding, Bernstein, Bennett bounds) one typically chooses $\lambda$ as a decreasing function of $n$. However, that is not the case with the self-normalized inequalities studied here, whose widths are growing with time. For this reason, the optimal value of $\lambda$ at a particular time $t=n$ is an increasing function of $n$ (this is illustrated concretely by the preceding discussion, and by the proof of Theorem~\ref{thm:sub-gamma-stitching}). 
\end{remark}

\ifarxiv
If $\lambda$ is not optimized, how does Theorem~\ref{thm:sub-psi} with $\psi = \psi_N$ compare to Theorem~\ref{thm:sub-gaussian}? The right hand panel of Figure~\ref{fig:gt_and_subg_boundaries} plots the comparison for various values of $\lambda$. As one would expect, Theorem~\ref{thm:sub-psi} is looser since it handles a wider class of distributions and is not explicitly optimized for sub-Gaussian processes. That said, Theorem~\ref{thm:sub-psi} loses surprisingly little over Theorem~\ref{thm:sub-gaussian}, even for fixed values of $\lambda$. However, $\lambda$ must be chosen correctly, otherwise the bound can become extremely poor. 
As we can see, one faces a tradeoff when choosing $\lambda$, with smaller values being tighter for larger $t$ (though sometimes this tradeoff is minimal compared to the size of the bound; see Figure~\ref{fig:subgamma} for instance). The early staggered jumps that we see in the sub-Gaussian boundary in Figure~\ref{fig:gt_and_subg_boundaries} are a result of small values of $g_{\psi,t}$ early on, before $\alpha_t$ shrinks sufficiently.  
\else 
It's worth asking how Theorem~\ref{thm:sub-psi} compares with the famous bound of \citet{abbasi2011improved},  which holds when $(S_t,V_t)$ is a sub-Gaussian process. This bound is stated as Theorem~\ref{thm:sub-gaussian} in Appendix~\ref{sec:sub-gaussian}.  
The right hand panel of Figure~\ref{fig:gt_and_subg_boundaries} plots the comparison for various values of $\lambda$. As one would expect, Theorem~\ref{thm:sub-psi} is looser since it handles a wider class of distributions and is not explicitly optimized for sub-Gaussian processes. That said, Theorem~\ref{thm:sub-psi} loses surprisingly little over Theorem~\ref{thm:sub-gaussian}, even for fixed values of $\lambda$. 
As we can see, one faces a tradeoff when choosing $\lambda$, with smaller values being tighter for larger $t$ (though sometimes this tradeoff is minimal compared to the size of the bound; see Figure~\ref{fig:subgamma} for instance). The early staggered jumps in the sub-Gaussian boundary in Figure~\ref{fig:gt_and_subg_boundaries} are a result of small values of $g_{\psi,t}$ early on, before $\alpha_t$ shrinks sufficiently.  
\fi 

Figure~\ref{fig:subgamma} compares Theorem~\ref{thm:sub-psi} to the bound presented by \citet{whitehouse2023sublinear}. We fix the growth rate of the condition number, $\kappa(V_t)$, and vary that of $\det(V_t)$. As expected, for most values of $\lambda$, our bound outperforms the condition number-based bound of \citet{whitehouse2023time} when the determinant grows slowly but is uniformly weaker when the determinant grows quickly. 

Intriguingly, the bound of \citet{whitehouse2023time} seems to always dominate our bound at small sample sizes (roughly $<200$). The is due to the factor of $g_{\psi,t}(\lambda)$ in our bounds, which is small at low sample sizes (or, more specifically, until $\gamma_{\min}(V_t+U_0)$ grows sufficiently large). This highlights a drawback of our bounds: if $\gamma_{\min}(V_t+U_0)$ never grows, or grows extremely slowly, then $g_{\psi,t}(\lambda)$ can remain extremely small, blowing up the bound and making it vacuous. 

\ifarxiv Next we explore how to remove $\lambda$ as a hyperparameter in Theorem~\ref{thm:sub-psi}. 
\fi 

\begin{figure}[t]
  \centering
  \ifarxiv
  \includegraphics[width=1\linewidth]{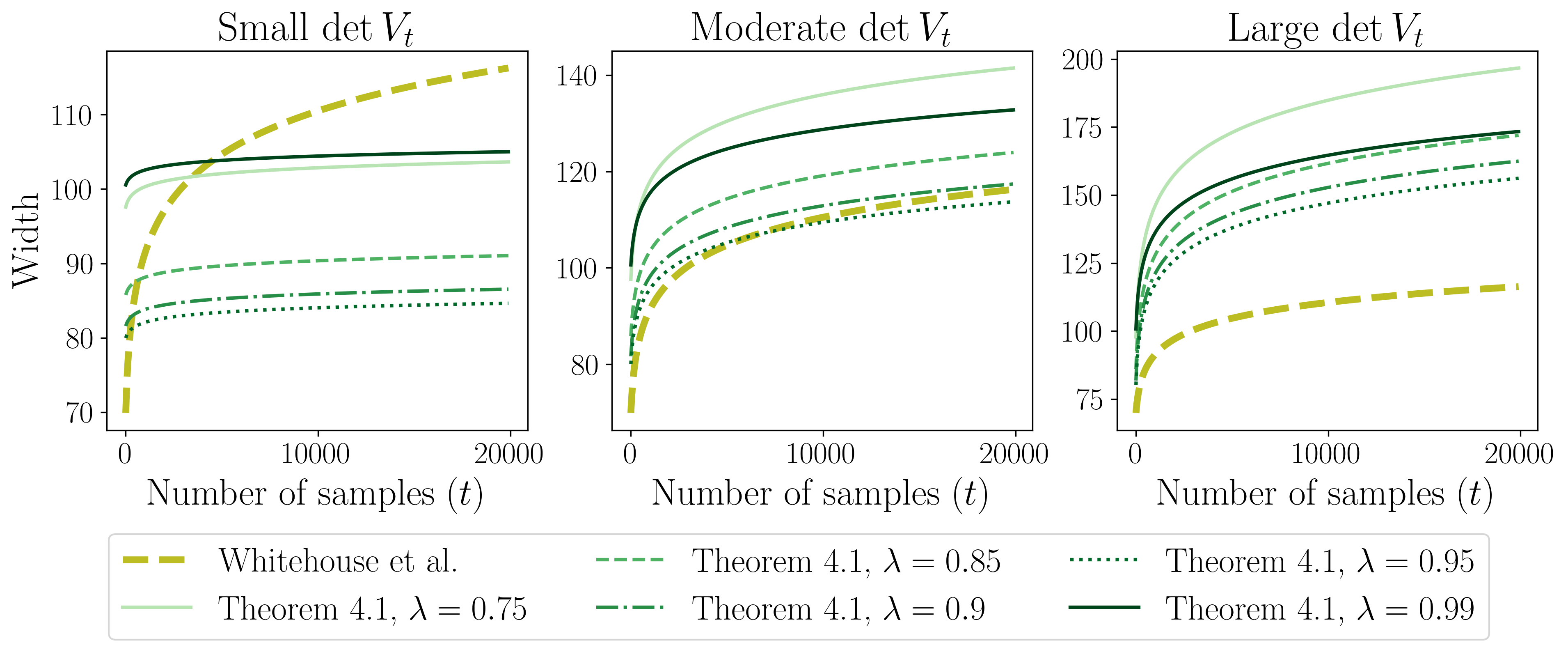}
  \else 
  \includegraphics[width=1\linewidth]{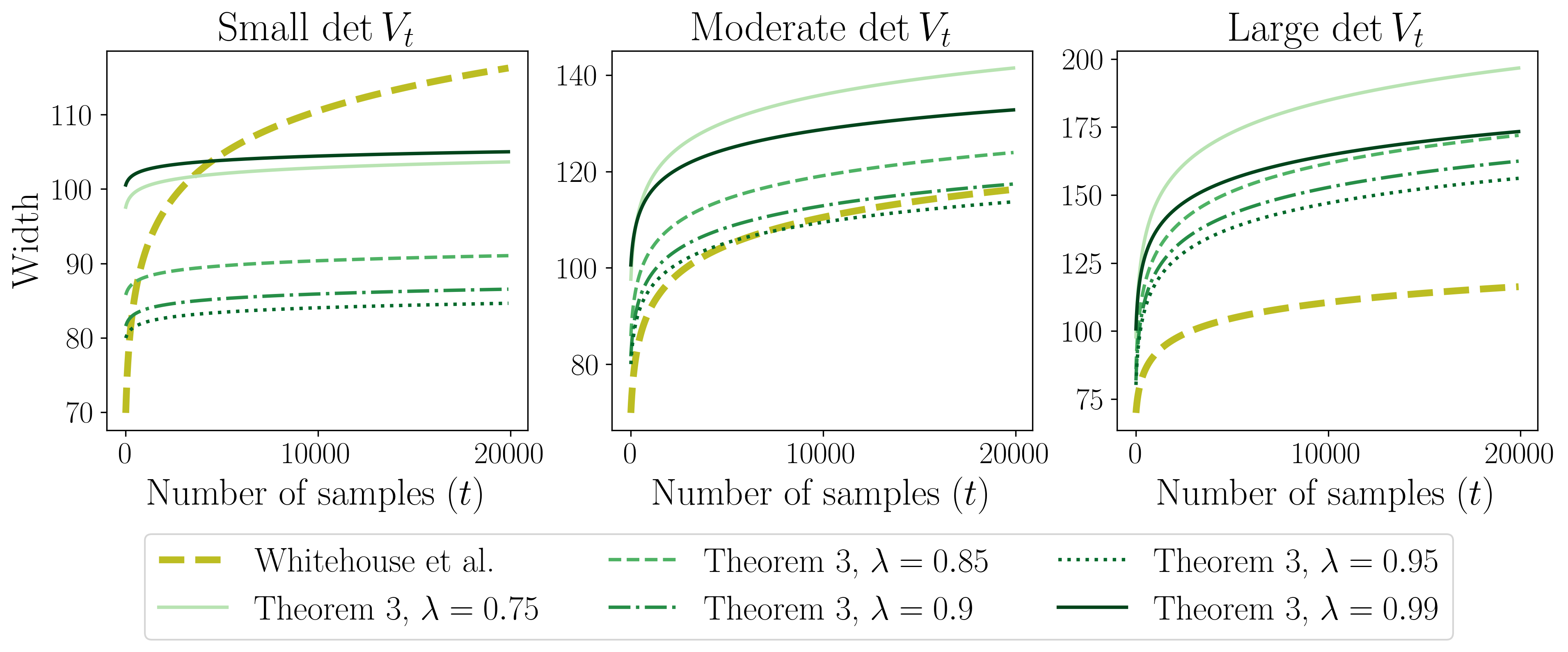}
  \fi 
  \caption{A comparison of Theorem~\ref{thm:sub-psi} with the bound of \citet{whitehouse2023time}, which we recall is based on the condition number of $V_\tau$. We control the growth of the determinant with rank-$k$ updates: as $k$ grows, so does $\det V_\tau$. The condition number has the same growth in each case.  As the sample size grows, Theorem~\ref{thm:sub-psi} outperforms the condition number-based bound for smaller values of $\det V_\tau$, though the performance depends substantially on the choice of $\lambda$. It loses ground as the determinant grows relative to $d \log \kappa(V_\tau)$. We use $d = 20, c=1$ and $U_0 = I_d$. 
  Full simulation details can be found in Appendix~\ref{app:experimental-details}.  }
  \label{fig:subgamma}
\end{figure}

\subsection{Optimizing $\lambda$ via Stitching}
\label{sec:super-gaussian-stitching}

In order to overcome the hurdle of optimizing $\lambda$ in Theorem~\ref{thm:sub-psi}, will apply the bound of Theorem~\ref{thm:sub-psi} iteratively over geometrically spaced epochs, choosing a different $\lambda$ in each epoch. This broad technique goes by many names such as doubling, discrete mixtures, chaining, etc., but here we follow a particularly sharp variant called \emph{stitching} by \citet{howard2021time}. 
\ifarxiv
(The extra sharpness of stitching comes from employing  tighter line crossing inequalities due to~\cite{howard2020time}, which results in optimizing $\lambda$ differently; see the above paper for references to other variants of this technique.) Stitching has since been a applied in number of works on time-uniform bounds~\citep{chugg2023unified,chugg2025time}, including \citet{whitehouse2023time}. In this section we provide two stitching results: One for sub-gamma processes (Theorem~\ref{thm:stitching-simplified}) and one for more general processes (Theorem~\ref{thm:convex_conjugate_bound}).

We begin with sub-gamma processes. Fortunately, the arithmetic is sufficiently navigable in this case so as to allow us to give a bound with small and precise constants. 
The result is Theorem~\ref{thm:stitching-simplified} below. 
As we mentioned in Section~\ref{sec:prelims}, all sub-$\psi$ functions are sub-gamma, in the sense that for all twice-differentiable $\psi:[0,\lambda_{\max})\to\Re$ with $\psi(0) = \lim_{x\to0+}\psi'(x) = 0$,  there exist constants $a,c>0$ such that $\psi(\lambda) \leq a \psi_{G,c}(\lambda)$~\citep[Proposition 1]{howard2020time}. Sub-gamma bounds can thus be applied to all sub-$\psi$ processes after appropriate scaling. 
\else 
In this section we provide the stitching result for sub-gamma processes. A result for more general processes is given in the appendix; see
Theorem~\ref{thm:convex_conjugate_bound}. As we mentioned in Section~\ref{sec:prelims}, all sub-$\psi$ functions are sub-gamma, in the sense that for all twice-differentiable $\psi:[0,\lambda_{\max})\to\Re$ with $\psi(0) = \lim_{x\to0+}\psi'(x) = 0$,  there exist constants $a,c>0$ such that $\psi(\lambda) \leq a \psi_{G,c}(\lambda)$~\citep[Proposition 1]{howard2020time}. Sub-gamma bounds can thus be applied to all sub-$\psi$ processes after appropriate scaling. 
\fi

As far as stitching is concerned, in the proof of Theorem~\ref{thm:stitching-simplified} the epochs are defined relative to the intrinsic time $\det V_t$. More specifically, the $k$-th epoch is $E_k = \{t: \eta^k \leq \det V_t < \eta^{k+1}\}$ for some parameter $\eta>1$. 
The weight assigned to the $k$-th epoch is determined by a ``stitching function'' $\ell(k)$, which obeys $\sum_k \ell^{-1}(k)\leq 1$. In the result that follows we make the simplifying choices $\eta = 2$ and $\ell(k) = (k+1)^2\zeta(2)$ where $\zeta(2) = \pi^2/6$ is the Riemann-zeta function. A more general theorem which keeps all hyperparameters unspecified can be found in Appendix~\ref{proof:sub-gamma}. 

\begin{theorem}
\label{thm:stitching-simplified}
    Let $(S_t,V_t)$ be a sub-$\psi_{G,c}$ process for any $c>0$. Further suppose that $V_1 + U_0\succeq I_d$ for all $t\geq 1$ and some positive-definite $U_0$ with $\upsilon = \gamma_{\min}(U_0)$.  Fix $\delta\in(0,1/e]$. Let $\xi(x)= 2/(c + \sqrt{c^2 + 2x})$. 
    Then, with probability $1-\delta$, for all stopping times $\tau$, 
    \begin{equation}
    \label{eq:sub-gamma-bound}
        \|S_\tau\|_{(V_\tau+U_0)^{-1}} \leq \frac{cD_{\tau} + 1.51\sqrt{D_{\tau}} + \xi(c)\max\left\{\frac{1}{\xi(\upsilon)\upsilon}, \sqrt{\frac{D_\tau}{2}}\right\}}{H_{c,\tau}}, 
    \end{equation}
    where 
    \begin{equation}
    \label{eq:Dtau-simplified}
        D_\tau = \frac{1}{2}\log\left(\frac{\det (V_\tau+U_0)}{\det U_0}\right) + 1.5 + 2\log(\log_2(\det (V_\tau+U_0))+1) + \log(1/\delta), 
    \end{equation}
    and 
    \begin{equation}
    \label{eq:Hc-simplified}
        H_{c,\tau} = 0\vee \left(\sqrt{ \frac{\sqrt{c^2 + 2c} - c}{3 + c}} - \sqrt{\frac{\gamma_{\max}(U_0)}{\gamma_{\min}(V_t + U_0)} \xi(c)} \right).
    \end{equation}
\end{theorem}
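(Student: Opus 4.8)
The plan is to deduce Theorem~\ref{thm:stitching-simplified} from the line-crossing inequality Theorem~\ref{thm:sub-psi} by stitching over geometric epochs in the intrinsic time $\det V_t$. First I would instantiate Theorem~\ref{thm:sub-psi} with $\psi = \psi_{G,c}$, for which $\psi^{-1}$, $\psi^*$, and the map $z \mapsto z\,g_{\psi,t}(z)$ all have clean closed forms; in particular $\psi_{G,c}^{-1}(z) = $ the positive root of $\tfrac{\lambda^2}{2} = z(1-c\lambda)$, which controls $g_{\psi,t}$ explicitly. Since $U_0 \succeq I_d$ and $\rho = \gamma_{\min}(U_0) \geq 1$, the admissibility constraint $\lambda \geq \psi^{-1}(1/\gamma_{\min}(U_0)) = \psi^{-1}(1/\rho)$ is mild and I would record the corresponding numerical threshold (this is where the $\tfrac{c+\sqrt{c^2+2\rho}}{2\rho}$ term in the $\max$ originates — it is $\psi_{G,c}^{-1}(1/\rho)$ up to rearrangement). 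Define epochs $E_k = \{t : 2^k \leq \det V_t < 2^{k+1}\}$ and, on epoch $E_k$, apply Theorem~\ref{thm:sub-psi} at confidence level $\delta / \ell(k)$ with $\ell(k) = (k+1)^2 \zeta(2)$, and with $\lambda = \lambda_k$ chosen to (approximately) optimize the right-hand side of~\eqref{eq:sub-psi-bound} when $\tfrac12\log(\det V_\tau / \det U_0) \approx \tfrac12 \log(2^k/\det U_0)$. A union bound over $k$ using $\sum_k \ell(k)^{-1} = 1$ gives a bound holding simultaneously on all epochs with probability $1-\delta$, and hence at every stopping time $\tau$ (the epoch index of $\tau$ is $k_\tau = \lfloor \log_2 \det V_\tau \rfloor$).

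The core computation is the per-epoch optimization of $\lambda$. Writing $W(\lambda) = \tfrac{\|U_0\|_\op^{1/2}}{\lambda g_{\psi,\tau}(\lambda)} D_\tau(\delta) + \tfrac{g_{\psi,\tau}(\lambda)\psi(\lambda)}{\lambda \|U_0\|_\op^{1/2}}$ from~\eqref{eq:sub-psi-bound}, I would lower-bound $g_{\psi,\tau}(\lambda)$ by a quantity depending only on $\gamma_{\max}(U_0)/\gamma_{\min}(V_t)$ — precisely the quantity $H_{c,\tau}$ in~\eqref{eq:Hc-simplified}, which is $g_{\psi_{G,c},t}$ evaluated at the worst-case relevant $\lambda$ (the upper end $\lambda = 1/c$ where $\psi_{G,c}^{-1}$ is "largest" in the relevant sense, giving the $\tfrac{2}{c+\sqrt{c^2+2c}}$ inside the second square root). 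With $g$ replaced by $H_{c,\tau}$ in the denominators, $W$ becomes a standard $A/\lambda + B\lambda$-type expression after bounding $\psi_{G,c}(\lambda)/\lambda \leq \tfrac{\lambda}{2(1-c\lambda)}$; choosing $\lambda_k \asymp \sqrt{D}/(\text{something})$ on epoch $E_k$ and plugging back in produces the numerator $c D_\tau + 1.60\sqrt{D_\tau} + (\text{the }\max\text{ term})$ — the $cD_\tau$ from the sub-gamma "quadratic-over-linear" tail, the $\sqrt{D_\tau}$ from the sub-Gaussian part, and the constant/$\sqrt{D_\tau/2}$ from edge effects and the $+1$ in~\eqref{eq:sub-psi-bound}. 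The $2^k \leq \det V_\tau$ slack converts the epoch-$k$ value of $D$ into $D_\tau$ at the cost of the $+1.5$ and the $2\log(\log_2(\det V_\tau)+1)$ (the latter absorbing $\log \ell(k_\tau) = 2\log(k_\tau+1) + \log\zeta(2)$), which explains the form of~\eqref{eq:Dtau-simplified}. The restriction $\delta \leq 1/\sqrt 2$ ensures $\log(1/\delta) \geq \tfrac12\log 2$, which is used to absorb a stray additive constant into $D_\tau$.

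The main obstacle I anticipate is \emph{bookkeeping the constants tightly enough} to land exactly on $1.60$, $1.5$, and the stated $\max$, while keeping $\lambda_k$ admissible (i.e. $\lambda_k \in [\psi_{G,c}^{-1}(1/\rho), 1/c) \cap \im(\psi_{G,c})$) across \emph{all} epochs simultaneously — including small epochs where $D$ is small and the optimal unconstrained $\lambda$ might fall below the threshold $\psi_{G,c}^{-1}(1/\rho)$, which is exactly the case that forces the first branch $\tfrac{c+\sqrt{c^2+2\rho}}{2\rho}$ of the $\max$. A secondary subtlety is that $g_{\psi,\tau}$ depends on $\alpha_\tau = \alpha(U_0, V_\tau)$, which is random and only controlled through $\gamma_{\max}(U_0)/\gamma_{\min}(V_\tau)$; I would need $\alpha(U_0,V_t) \leq \gamma_{\max}(U_0)/\gamma_{\min}(V_t)$ (immediate from~\eqref{eq:rayleigh} since $V_t \succeq \gamma_{\min}(V_t) I_d$) and monotonicity of $g_{\psi,t}$ in $\alpha_t$ to pass to the deterministic lower bound $H_{c,\tau}$. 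Everything else — the union bound, the surjectivity of $z \mapsto z g_{\psi,t}(z)$ needed to solve for $\lambda_k$, and the $2^k$-to-$\det V_\tau$ conversion — is routine once the epoch structure is fixed. I would defer the fully general hyperparameter version ($\eta$, $\ell$ arbitrary) to the appendix as Theorem~\ref{thm:sub-gamma-stitching} and obtain~\eqref{eq:sub-gamma-bound} by specializing $\eta = 2$, $\ell(k) = (k+1)^2\zeta(2)$.
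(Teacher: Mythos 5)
Your proposal is correct and follows essentially the same route as the paper's proof (Theorem~\ref{thm:sub-gamma-stitching} in Appendix~\ref{proof:sub-gamma}): geometric epochs $E_k=\{t:\eta^k\le\det V_t<\eta^{k+1}\}$, per-epoch application of Theorem~\ref{thm:sub-psi} with $\delta_k=\delta/\ell(k)$ and $\lambda_k$ tuned to the epoch (with the constrained case $\lambda_k=\psi_{G,c}^{-1}(1/\rho)$ producing the first branch of the $\max$), a union bound, lower-bounding $g_{\psi,\tau}$ by $H_{c,\tau}$ via $\alpha_\tau\le\gamma_{\max}(U_0)/\gamma_{\min}(V_\tau)$ and $\psi_{G,c}^{-1}(\lambda_k)\le 2/(c+\sqrt{c^2+2c})$, and finally specializing $\eta=2$, $\ell(k)=(k+1)^2\zeta(2)$. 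The only bookkeeping detail you place slightly differently is that the $1.60$ coefficient (and the role of $\delta\le 1/\sqrt{2}$) comes from the factor $\sqrt{\alpha_{\delta,\eta}/2}$ bounding the slack between the epoch-start quantity $B_k$ used to set $\lambda_k$ and the realized $D_\tau$ (Lemma~\ref{lem:Bkbound}), while the $1.5$ and iterated-logarithm terms come from the union-bound weight $\log(\ell(k(\tau))/\delta)$ with $k(\tau)\le\log_2\det V_\tau$.
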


The denominator in Theorem~\ref{thm:sub-gamma-stitching} should be viewed as a penalty we pay for the appearance of $g_{\psi,t}$ in Theorem~\ref{thm:sub-psi}. If $\gamma_{\min}(V_t+U_0)\to\infty$ then $H_{c,t}$ converges to a constant depending on $c$, just as $g_{\psi,t}$. Note that $H_{c,t}$ is larger than 0 as soon as 
\ifarxiv
\[\gamma_{\min}(V_t + U_0) > \frac{(3 + c)\gamma_{\max}(U_0)}{c}.\]
\else 
$\gamma_{\min}(V_t + U_0) > \frac{(3 + c)\gamma_{\max}(U_0)}{c}.$
\fi 
Thus, as $c\to 0$, it takes either more observations or a faster growth rate of $\gamma_{\min}(V_t+U_0$) for the bound to be informative. 
We suspect that the appearance of $H_{c,\tau}$ in~\eqref{eq:sub-gamma-bound} is sub-optimal, but it's unclear how to remove this term with our current proof techniques.

Theorem~\ref{thm:stitching-simplified} assumes that $V_1+U_0\succeq I_d$ to ensure that the union of the epochs $E_k = \{t: \eta^k \leq \det( V_t+U_0)<\eta^{k+1}\}$, $k\geq 0$ used in the stitching argument constitutes a partitioning of the sample space. If this assumption is not met, however, we can rescale the result as follows. If $V_t\succeq \beta I_d$ for some $\beta<1$ and $(S_t,V_t)$ is a sub-$\psi$ process then \citet[Proposition 2.4]{whitehouse2023time} shows that $(S_t/\sqrt{\beta}, V_t/\beta)$ is a sub-$\psi_\beta$ process for $\psi_\beta(\lambda) = \beta \psi(\lambda/\sqrt{\beta})$. We can thus apply Theorem~\ref{thm:stitching-simplified} to this rescaled process.

\begin{remark}
The appearance of both the logarithmic term $\log\det V_\tau$ and the iterated logarithmic term $\log\log\det V_\tau$ in Theorem~\ref{thm:stitching-simplified} may seem strange. They have different origins: the latter is a result of stitching while the former comes from the variational proof technique and, in particular, is the KL-divergence between the prior and posterior in Proposition~\ref{prop:variational_template}. In short, the term $\log\det V_\tau$ captures the complexity of the process. It replaces $d\log\kappa(V_\tau)$ in the bounds of \citet{whitehouse2023time} and is comparable to the dependence on variance-like terms in non self-normalized bounds (see, e.g., \citealt[Theorem 2.3]{chugg2023unified}). 
\end{remark}

\begin{remark}
    Related to the previous remark, it's worth noting that the appearance of $\log\det V_\tau$ has unfortunate consequences when the results are applied in $d=1$. In this case, $\log \kappa(V_\tau)=0$ and the bounds of \citet{whitehouse2023time} scale as $\|S_\tau\|_2 \lesssim \sqrt{V_\tau}(\psi^*)^{-1}(\log\log(V_\tau) + \log(1/\delta))$. In our case, however, we retain a $\log(V_\tau)$ term, which is unusual for time-uniform bounds in the scalar setting (cf. \citealp{howard2021time}). In particular, one might expect the term $\log\log(V_\tau)$ instead of $\log(V_\tau)$. In the multivariate setting, however, an iterated logarithm dependence on $\det V_\tau$ only is impossible, as shown by the following example. 
\end{remark}

\begin{example}
\label{ex:lower_bound}
    We borrow the conclusion of an elegant example from \citet{pena2007pseudo}, also studied by \citet{whitehouse2023time} (see their Example 4.6). In particular, \citet{pena2007pseudo} construct a sub-Gaussian process $(S_t,V_t)$ in $\Re^2$ such that $\|S_t\|_{V_t^{-1}}\sim\log(t)$, $\gamma_{\max}(V_t) \sim t(1 + a)$, and $\gamma_{\min}(V_t)\sim \log(t) / (1 + a)$ for a constant $a$.   Hence $\det V_t \sim t\log t$ implying that $\|S_t\|_{V_t^{-1}}\sim \log(\det V_t)$. 
\end{example}

\ifarxiv
We also briefly note that Example~\ref{ex:lower_bound} shows that, even in the fixed-time setting, one cannot hope for bounds that do not have logarithmic dependence on $V_t$. 
This is in contrast to the non-self-normalized fixed-time setting, where such dependence can be avoided.  
\fi 

\subsection{A note on suspected suboptimality}
\label{sec:suboptimal}
Consider Theorem~\ref{thm:stitching-simplified} with 
$V_t$ such that $\gamma_{\min}(V_t)\to\infty$ and $c=1$. 
With probability $1-\delta$, we obtain that 
\ifarxiv
\[\|S_t\|_{(V_t + U_0)^{-1}} \lesssim \log\det(V_t) + \log(1/\delta) + \sqrt{\log\det(V_t) + \log(1/\delta)}.\]
\else 
$\|S_t\|_{(V_t + U_0)^{-1}} \lesssim \log\det(V_t) + \log(1/\delta) + \sqrt{\log\det(V_t) + \log(1/\delta)}.$
\fi 
To put this in more familiar terms, suppose that $V_t = tI_d$. Then we obtain the bound 
\[\left\|\frac{S_t}{t}\right\| \lesssim \frac{d\log(t) + \log(1/\delta)}{\sqrt{t}} + \sqrt{\frac{d\log(t) + \log(1/\delta)}{t}}.\]
This resembles the terms in a Bernstein bound, except for a missing factor of $1/\sqrt{t}$ on the first term. Indeed, we expect the linear term in a Bernstein bound to shrink at rate $1/t$, not $1/\sqrt{t}$, thus making it asymptotically smaller than the square root term. This makes us suspect that Theorem~\ref{thm:stitching-simplified} is suboptimal in terms of dependence on $D_t$. We imagine that the correct dependence would resemble $cD_t / \gamma_{\min}(V_t) + \sqrt{D_t}$, akin to the bound of \citet{whitehouse2023time} (though we emphasize that theirs is not a log-determinant bound, and thus not dimension-free). 

It is instructive to compare our bound with several 
concurrent works which study log-determinant Bernstein bounds~\citep{metelli2025generalized,akhavan2025bernstein,martinez2025vector}. Like us, \citet{akhavan2025bernstein} obtain a bound which scales linearly in $\log\det(V_t)$. Meanwhile, both \citet{metelli2025generalized} and \citet{martinez2025vector} avoid this linear dependence, but at the cost of a multiplicative dependence between $\log\det(V_t)$ and $\log(1/\delta)$. That is, their bounds scale as 
\begin{equation*}
    \left\|S_t\right\|_{(V_t + U_0)^{-1}} \lesssim \log(1/\delta) + \sqrt{\log(\det(V_t))\log(1/\delta)}.
\end{equation*}

Our bound will thus be tighter for $\log(\det(V_t)) \lesssim \log(1/\delta)$, and theirs when $\log(1/\delta) \lesssim \log(\det(V_t))$. 
There are other differences as well. For instance, the result of \citet{metelli2025generalized} only applies to bounded observations; those of \citet{martinez2024empirical} are not time-uniform or either require a priori upper bounds on $\log\det(V_t)$. 
Whether one can obtain a time-uniform log-determinant bound for sub-gamma processes with (i) additive dependence between $\log\det(V_t)$ and $\log(1/\delta)$ and (ii) a linear term vanishing at  a faster rate, is thus an open question.

\ifarxiv
\ifarxiv
\subsection{A general bound via the convex conjugate}
\else 
\section{A general bound via the convex conjugate}
\fi 

While Theorem~\ref{thm:stitching-simplified} removes the need to choose $\lambda$, we find that in practice fixing $\lambda$ and applying Theorem~\ref{thm:sub-psi} results in the tightest bounds. Indeed, Theorem~\ref{thm:sub-psi}  typically outperforms Theorem~\ref{thm:stitching-simplified} by a factor of three to five. 
This is not out of step with previous work: stitching is often deployed as a theoretical tool to obtain optimal rates, not necessarily as a practical bound.
If one is interested in using the line-crossing inequality, we find that a good rule of thumb is to choose $\lambda \approx 0.85\lambda_{\max}$. If possible, we recommend sample splitting to choose $\lambda$.

It's helpful to write Theorem~\ref{thm:stitching-simplified} in terms of $\psi^*_{G,c}$, in which case~\eqref{eq:sub-gamma-bound} becomes 
\begin{equation}
    \|S_\tau\|_{(V_\tau+U_0)^{-1}} \lesssim \frac{(\psi_{G,c}^*)^{-1}(D_\tau)}{H_{c,\tau}}.
\end{equation}
The appearance of $(\psi_{G,c}^*)^{-1}$ is comforting: reliance on the inverse convex conjugate of $\psi$ is consistent with past work on sub-$\psi$ concentration, such as \citet{whitehouse2023time} (see~\eqref{eq:whitehouse-1} and~\eqref{eq:whitehouse-2} in Section~\ref{sec:related-work}) and \citet{manole2023martingale}.  The reliance here also suggests that we may be able to obtain a bound for general $\psi$ that relies on $(\psi^*)^{-1}$. This is the goal of Theorem~\ref{thm:convex_conjugate_bound} below, which explicitly relates the growth of $\|S_t\|_{(V_t+U_0)^{-1}}$ to the inverse of $\psi^*$. 

As stated, Theorem~\ref{thm:convex_conjugate_bound} is more of mathematical interest than of practical value. We consider general CGF-like functions $\psi$ which have a nonnegative third derivative and show how to relate the growth rate to the convex conjugate of $\psi$. Since we stitch over epochs of the form $\{t: \eta^k\leq \|S_\tau\|_{(V_\tau+U_0)^{-1}} <\eta^{k+1}\}$, the width of the bound is itself function of $\|S_\tau\|_{(V_\tau+U_0)^{-1}}$ itself instead of $\det V_\tau$ as in Theorem~\ref{thm:stitching-simplified}. 
In particular, the width depends on an iterated-logarithm term involving $\|S_\tau\|_{(V_\tau+U_0)^{1}}$ and a second term $f_\psi(2\|S_\tau\|_{(V_\tau+U_0)^{-1}})$, for a function $f$ defined below. 
Such self-reference makes the bound difficult to apply. 
Still, as iterated logarithm terms grow extremely slowly,\footnote{For instance, $\log_2(\log_2(10^{200}))<10$. $10^{200}$ is roughly the number of books in Jorge Luis Borges' \emph{Library of Babel}~\citep{borges1964labyrinths}, while the approximate number of the atoms in the observable universe is a meager $10^{80}$. } 
and $f$ is often sublinear, 
we feel the result gives a sense of the growth rate of $\|S_t\|_{(V_t+U_0)^{-1}}$. 

To state the result, for a given CGF-like $\psi$, let 
\begin{equation}
    \lambda^*(a) := \argmax_{\lambda \in (0,\lambda_{\max})} \lambda a - \psi(\lambda),
\end{equation}
which we note is well-defined; see Lemma~\ref{lem:psi-properties} in the appendix. Then define the function $f_\psi:\Re\to \Re$ as
\begin{equation}
    f_\psi(x) := \psi(\lambda^*(x))\left(\sqrt{\psi^{-1}(\lambda^*(x))} -1\right).
\end{equation}
The function $f_\psi$ is somewhat opaque, so let us give a few examples of how it grows as a function of $a$. The proof is provided in Appendix~\ref{proof:f-examples}. 
\begin{lemma}
\label{lem:f-examples}
Let $(a_n)$ be an increasing sequence. If $\psi= \psi_{G,c}$ for some $c>0$, then
\begin{equation}
\label{eq:f-subgamma}
    f_\psi(a_n) = O\left(\frac{\sqrt{a_n}}{c^{3/2}}\right)\left( \sqrt{\frac{2}{c + \sqrt{c^2 + 2c}}} -1\right).
\end{equation}
If $\psi = \psi_{E,c}$ for some $c>0$, then 
\begin{equation}
    f_\psi(a_n) = O(\log(ca_n)) \left(\frac{1}{\sqrt{c}}-1\right). 
\end{equation}
\end{lemma}
Here the asymptotics are in terms of the sequence $(a_n)$. We've dropped other constants but kept the dependence on $c$.  Note that in both cases, if $c$ is large enough, then $f(a_n)$ is negative. 

We now state Theorem~\ref{thm:convex_conjugate_bound}. 
The proof is provided in Appendix~\ref{proof:convex_conjugate}. We refer again to Remark~\ref{rem:super-gaussian} which points out that the assumption that $\psi(\lambda)/\lambda^2$ is nondecreasing is a minor one.

    

\begin{theorem}
\label{thm:convex_conjugate_bound}
    Let $(S_t,V_t)$ be a sub-$\psi$ process where $\psi$ is CGF-like, $\psi(\lambda)/\lambda^2$ is nondecreasing, and $\psi'''=\frac{\d^3 \psi}{\d\lambda^3} \geq 0$. Let $U_0$ be positive-definite and satisfy $\gamma_{\min}^{-1}(U_0)  <\psi(\lambda_{\max})$. Set $\hatV_t = V_t + U_0$ for all $t$. 
    Then, with probability $1-\delta$, for all stopping times $\tau$, 
    \begin{equation}
    \label{eq:asymp-stitching-bound}
        \|S_\tau\|_{\hatV_\tau^{-1}} \lesssim  (\psi^*)^{-1}\left(\frac{\log (\det \hatV_\tau) + \log(\log(\|S_\tau\|_{\hatV_\tau^{-1}})/\delta) }{0\vee \sqrt{ 1 - \gamma_{\max}(U_0)/\gamma_{\min}(\hatV_\tau)}}  + f_\psi\left(2\|S_\tau\|_{\hatV_\tau^{-1}}\right) \right).
    \end{equation}
\end{theorem}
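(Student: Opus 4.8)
The plan is to obtain Theorem~\ref{thm:convex_conjugate_bound} by applying the line-crossing inequality of Theorem~\ref{thm:sub-psi} in each of a sequence of geometrically-spaced epochs, exactly as advertised in Section~\ref{sec:super-gaussian-stitching}, but with epochs defined relative to the running value $\|S_t\|_{V_t^{-1}}$ rather than $\det V_t$. Concretely, I would fix a base $\eta>1$ and set $E_k = \{t : \eta^k \le \|S_t\|_{V_t^{-1}} < \eta^{k+1}\}$ for $k\ge 0$, together with the ``base'' epoch $\{t : \|S_t\|_{V_t^{-1}} < 1\}$ on which the bound is trivial. Pick a stitching weight $\ell(k)$ with $\sum_k \ell(k)^{-1}\le 1$ (e.g.\ $\ell(k) = (k+1)^2\zeta(2)$), apply Theorem~\ref{thm:sub-psi} with confidence level $\delta/\ell(k)$ on epoch $E_k$ with a parameter $\lambda_k$ chosen as a function of the target value $\approx \eta^k$ in that epoch, and take a union bound. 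On the event that all the epoch-wise bounds hold, for the epoch $k$ containing the stopping time $\tau$ we have $\|S_\tau\|_{V_\tau^{-1}} \asymp \eta^k$, and the right-hand side of~\eqref{eq:sub-psi-bound} with $\delta$ replaced by $\delta/\ell(k)$ contributes a $\log\ell(k) \asymp \log k \asymp \log\log\|S_\tau\|_{V_\tau^{-1}}$ term, which is the source of the iterated-logarithm-of-$\|S_\tau\|_{V_\tau^{-1}}$ in~\eqref{eq:asymp-stitching-bound}.

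The heart of the argument is the choice of $\lambda_k$ and the resulting asymptotic simplification of~\eqref{eq:sub-psi-bound}. Recall~\eqref{eq:sub-psi-bound} reads, schematically, $\|S_\tau\|_{V_\tau^{-1}} \le \frac{c_1}{\lambda g_{\psi,\tau}(\lambda)} D_\tau(\delta) + \frac{g_{\psi,\tau}(\lambda)\psi(\lambda)}{\lambda c_1}$ where $D_\tau(\delta) = \tfrac12\log(\det V_\tau/\det U_0) + 1 + \log(1/\delta)$ and $c_1 = \|U_0\|_\op^{1/2}$. I want to balance the two terms so that the bound becomes essentially $\psi^*(\lambda)/\lambda$ evaluated at the right scale; since $\psi^*(y) = \sup_\lambda(\lambda y - \psi(\lambda))$ and $\psi$ is CGF-like with $\psi'''\ge 0$, the standard inversion gives $(\psi^*)^{-1}(x) \asymp \psi(\lambda^*)/\lambda^*$ where $\lambda^*$ solves $\lambda\psi'(\lambda) - \psi(\lambda) = x$, i.e.\ the optimal $\lambda$ in the line-crossing bound at ``time'' $x$. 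So in epoch $k$ I would choose $\lambda_k$ to be (approximately) the maximizer in $\psi^*\!\big(D_\tau/(c_1 \eta^k)\big)$-type expression — equivalently the $\lambda$ for which the line-crossing threshold is tightest near $\|S\|_{V^{-1}} = \eta^k$ — and then show that, up to the $g_{\psi,\tau}$ factor and lower-order additive slack, the bound collapses to $(\psi^*)^{-1}$ of $\big(\log\det V_\tau + \log\log\|S_\tau\|_{V_\tau^{-1}} + \log(1/\delta)\big)$ divided by something that captures the spread of $V_\tau$. The $\lambda_{\max}<\infty$ and $\psi'''\ge0$ hypotheses are exactly what is needed to control the behavior of $\psi^{-1}$, $\psi^*$ and their inverses near the boundary, and to guarantee the required $\lambda_k$ lies in the admissible range $[\psi^{-1}(1/\gamma_{\min}(U_0)),\lambda_{\max})\cap\im(\psi)$ once $k$ (hence the target value) is large enough; for small $k$ the bound is absorbed into the $\lesssim$ and the trivial base epoch.

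The remaining piece is the denominator $\sqrt{1 - 1/\gamma_{\min}(V_\tau)}$: this comes from bounding $g_{\psi,\tau}(\lambda_k)$ from below. By definition $g_{\psi,t}(z) = \sqrt{\alpha_t\psi^{-1}(z) + 1 - \alpha_t} - \sqrt{\alpha_t\psi^{-1}(z)}$ with $\alpha_t = \alpha(U_0,V_t)$; using $U_0$ with $\gamma_{\min}^{-1}(U_0) < \psi(\lambda_{\max})$ (which lets me take, say, $U_0$ of operator norm $\asymp 1$ so $\alpha_t \le 1/\gamma_{\min}(V_t)$) and the monotonicity/concavity of $z\mapsto g_{\psi,t}(z)$, one shows $g_{\psi,\tau}(\lambda_k) \gtrsim \sqrt{1 - \alpha_\tau} \gtrsim \sqrt{1 - 1/\gamma_{\min}(V_\tau)}$ uniformly over the relevant $\lambda_k$, which is where that factor enters~\eqref{eq:asymp-stitching-bound}. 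I expect the main obstacle to be the asymptotic bookkeeping in the middle step: carefully checking that the balanced choice of $\lambda_k$ genuinely produces $(\psi^*)^{-1}$ of the stated argument (rather than some nearby but not-quite-equal expression), and that all the additive constants, the ``$+1$'', the $\log\ell(k)$, and the epoch-discretization error $\eta^{k+1}$ vs $\eta^k$ can be swept into the $\lesssim$ — this is why the theorem is stated only asymptotically. The super-Gaussian assumption $\psi(\lambda)/\lambda^2$ nondecreasing is used, as in Theorem~\ref{thm:sub-psi}, to justify working over $\dball$ and is otherwise harmless by Remark~\ref{rem:super-gaussian}.
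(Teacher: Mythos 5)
Your architecture is the same as the paper's: stitch over geometric levels $E_j=\{\eta^j\le\|S_t\|_{V_t^{-1}}<\eta^{j+1}\}$, apply Theorem~\ref{thm:sub-psi} in epoch $j$ with confidence $\delta/\ell(j)$ and a conjugate-tuned $\lambda_j$, lower bound $g_{\psi,\tau}$ to produce the $\sqrt{1-1/\gamma_{\min}(V_\tau)}$ factor, and read off $\log\ell(j)\asymp\log\log\|S_\tau\|_{V_\tau^{-1}}$. However, there is a genuine gap at the crux, which you flag as ``the main obstacle'' but do not resolve: the epoch-discretization slack. Replacing $\|S_\tau\|_{V_\tau^{-1}}$ by the epoch endpoint costs an additive term $\lambda_j(\eta^{j+1}-\eta^j)=\lambda_j\eta^j(\eta-1)$, and this is \emph{not} lower-order bookkeeping: with $\lambda_j\approx\lambda^*(\eta^{j+1})$ it is a constant fraction (of order $\eta(\eta-1)$) of the main term $\psi^*(\eta^j)$, so it cannot simply be ``swept into the $\lesssim$,'' and with a generic choice such as $\eta=2$ the argument does not close at all. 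The paper's proof spends its effort exactly here: after rearranging \eqref{eq:sub-psi-bound} into $\lambda\|S_\tau\|_{V_\tau^{-1}}-\psi(\lambda)\le\|U_0\|_\op^{1/2}D_\tau(\delta)/g_{\psi,\tau}(\lambda)$, it uses Lemma~\ref{lem:psi-properties} --- $\psi'''\ge0$ gives $\lambda^*(ra)\le r\lambda^*(a)$, and $\psi(\lambda)/\lambda^2$ nondecreasing gives $\psi^*(u)\ge u\lambda^*(u)/2$ --- together with the restriction $1<\eta\le(1+\sqrt2)/2$ to show $\lambda_j\eta^j(\eta-1)\le\tfrac12\psi^*(\eta^j)\le\tfrac12\psi^*(\|S_\tau\|_{V_\tau^{-1}})$, which is then absorbed into the left-hand side to yield $\psi^*(\|S_\tau\|_{V_\tau^{-1}})\le 2h_\tau(\lambda_j)D_\tau(\delta_j)$. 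This absorption step is precisely where the hypotheses $\psi'''\ge0$ and super-Gaussianity do their work; your sketch never supplies it.

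Two secondary points. First, your $\lambda_k$ must be data-free: ``the maximizer in a $\psi^*(D_\tau/(c_1\eta^k))$-type expression'' is illegal because $D_\tau$ is random and is not pinned down within an epoch defined by the level of $\|S_t\|_{V_t^{-1}}$; the legal (and the paper's) choice is $\lambda_j=\max\{\lambda^*(\eta^{j+1}),\psi^{-1}(1/\gamma_{\min}(U_0))\}$, depending only on the epoch index, and no balancing against $D_\tau$ is needed once $\psi(\lambda)$ has been moved to the left to form the conjugate. Second, the inversion you quote, $(\psi^*)^{-1}(x)\asymp\psi(\lambda^*)/\lambda^*$ with $\lambda^*\psi'(\lambda^*)-\psi(\lambda^*)=x$, is false in general (for $\psi_{G,c}$ and large $x$ one has $(\psi^*)^{-1}(x)\asymp cx$ while $\psi(\lambda^*)/\lambda^*\asymp\sqrt{x}$); the relation actually used is $(\psi^*)^{-1}(x)=\psi'(\lambda^*(x))=\inf_\lambda(x+\psi(\lambda))/\lambda$. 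Relatedly, $g_{\psi,\tau}(\lambda_j)\gtrsim\sqrt{1-\alpha_\tau}$ needs the subtracted term made explicit: $g_{\psi,\tau}(\lambda_j)\ge\sqrt{1-\alpha_\tau}-\sqrt{\alpha_\tau\psi^{-1}(\lambda_j)}\ge\sqrt{1-\alpha_\tau}-\sqrt{\alpha_\tau\psi^{-1}(\lambda_{\max})}$, which is where $\lambda_{\max}<\infty$ enters before $\alpha_\tau\le\gamma_{\max}(U_0)/\gamma_{\min}(V_\tau)$ gives the stated factor.
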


Given that $f_\psi(u)$ scales as $\sqrt{u}$ for $\psi = \psi_{G,c}$ and as $\log(u)$ for $\psi = \psi_{E,c}$, a natural question is whether one should convert a given sub-gamma process into a sub-exponential process before applying Theorem~\ref{thm:convex_conjugate_bound}. Indeed, any sub-$\psi_{G,c}$ is also a sub-$\psi_{E,3c/2}$ process (see \citealt[Table 5]{howard2020time}). Such a transformation will not make much of a difference, however. Both $(\psi_{G,c}^*)^{-1}$ and $(\psi_{E,c}^*)^{-1}$ are dominated by linear terms (indeed $(\psi_{G,c}^*)^{-1}(u) = cu + \sqrt{2u}$ and $(\psi_{E,c}^*)^{-1}(u) = cu + O(c^{-1}\log(1 + c^2u))$). Since $f_\psi$ is sublinear in both cases, if we write out the bounds and bring all the terms depending on $\|S_t\|$ to one side, the difference between the sub-gamma and sub-exponential cases is a bound on $\|S_t\| - O(\sqrt{\|S_t\|})$ vs $\|S_t\| - O(\log(\|S_t\|)$, which have the same growth rate.

If we are prepared to sacrifice some tightness and to assume that $f$ is sublinear (as is the case in Lemma~\ref{lem:f-examples}) we can move the terms depending on $\|S_\tau\|_{\hatV_\tau^{-1}}$  outside of the convex conjugate in~\eqref{eq:asymp-stitching-bound} and bound it by a constant times $\|S_\tau\|_{\hatV_\tau^{-1}}$. This results in Corollary~\ref{cor:convex_conjugate_2}.  The proof is in Appendix~\ref{proof:convex_conjugate_2}.

\begin{corollary}
\label{cor:convex_conjugate_2}
Consider the same setup as Theorem~\ref{thm:convex_conjugate_bound}. 
Let $\vp(y) = (\psi^*)^{-1}(y)$ and $\upsilon = \gamma_{\min}(U_0)>1$.  
For any $0<b<1$, define the hitting time $\tau_0 = \inf\{t: \upsilon/\gamma_{\min}(\hatV_t)\leq b\}$. 
Fix $\delta\in(0,1)$ and suppose both that $f_\psi(x)\leq \kappa x$ for some $\kappa >0$ and all $x\geq 0$, and 
 $\vp'(\log(1/\delta))(1 + 2\kappa) < \sqrt{1 - b}$. 
Then, with probability $1-\delta$, for all stopping times $\tau\geq \tau_0$, 
    \begin{equation}
    \label{eq:stitching-bound-2}
        \|S_\tau\|_{\hatV_\tau^{-1}} \lesssim  \left(\frac{\sqrt{1-b}}{\sqrt{1-b } - \vp'(\log(1/\delta))(1 + 2\kappa)}\right)(\psi^*)^{-1}\left(\frac{\log (\det \hatV_\tau)   + \log(1/\delta)}{\sqrt{ 1 - \gamma_{\max}(U_0)/\gamma_{\min}(\hatV_\tau)}}  \right).
    \end{equation}
\end{corollary}

The hidden constants in~\eqref{eq:stitching-bound-2} are those inherited from~\eqref{eq:asymp-stitching-bound}. We opt to keep the constant 
\ifarxiv
$\nicefrac{\sqrt{1-b}}{\sqrt{1-b } - \vp'(\log(1/\delta))(1 + 2\kappa)}$
\else 
\[\frac{\sqrt{1-b}}{\sqrt{1-b } - \vp'(\log(1/\delta))(1 + 2\kappa)},\]
\fi 
in the bound so that the cost of removing the reference to $\|S_\tau\|_{\hatV_\tau^{-1}}$ on the right hand of~\eqref{eq:asymp-stitching-bound} is made clear.

\section{Self-normalized Bernstein and Bennett Inequalities}
\label{sec:bennett}

In the scalar settings, two standard inequalities for light-tailed random variables are Bennett's inequality and Bernstein's inequality; see \citet[Theorem 2.9 and Theorem 2.10]{boucheron2013concentration}. The former holds for bounded random variables and the latter holds under a moment condition which has come to be known as ``Bernstein's condition.'' Both results rely on upper bounding the log-MGF with a particular $\psi$-function; $\psi_{P,c}$ in the case of Bennett's inequality and $\psi_{G,c}$ in the case of Bernstein's.  Here we provide dimension-free, self-normalized versions of these inequalities. 

Consider a stream $(X_t)$ of random vectors taking values in $\Re^d$ with conditional mean 0. That is, $\E_{t-1}[X_t] = 0$, where we use the shorthand $\E_{t-1}[\cdot] = \E[\cdot|\calF_{t-1}]$. We assume a conditional mean of 0 for convenience only. If this condition is not met, we may consider the centered vectors $X_t' = X_t - \E_{t-1}X_t$. Note that assuming a constant \emph{conditional} mean is more general than an iid assumption. In particular, it accommodates martingale dependence, making the resulting bounds useful in bandit applications (cf. \citealp{whitehouse2023sublinear,abbasi2011improved,lattimore2020bandit}). 

Our Bernstein and Bennett inequalities will both rely on the process $(N_t^B(\theta;\lambda))$ for $\theta\in\dsphere$ and some $\lambda>0$ defined by  
    \begin{equation}
    \label{eq:nsm-mgf}
        N_t^B(\theta;\lambda) = \prod_{k\leq t}\exp\left\{ \lambda \la \theta, X_k \ra - \log \E_{k-1} \exp(\lambda \la \theta, X\ra)\right\},
    \end{equation}
which, as long as  the log-MGF term $\log \E_{k-1}\exp(\lambda \la \theta, X\ra)$ is finite, is easily seen to satisfy $\E_{t-1}M_t(\theta) = M_{t-1}(\theta)$ and is hence a nonnegative martingale. (We use the superscript $B$ to signal Bennett/Bernstein). Depending on the assumptions we make on $(X_t)$, we can bound the log-MFG by some function of $\E_{k-1} X_kX_k^\intercal$, which will give rise to a sub-$\psi$ process. The following lemma showcases this when the random vectors are bounded in each direction. The resulting process  will provide our self-normalized Bennett's inequality. 

\begin{lemma}
\label{lem:bennett-process}
    Let $(X_t)$ be a stream of random vectors in $\Re^d$ with conditional mean 0 obeying $\sup_{\theta\in \dsphere} \la \theta, X_t\ra \leq b$ almost surely for some constant $b>0$. Then $S_t = \sum_{j\leq t} X_j$ and $V_t = U_0 + \sum_{j\leq t} \E_{j-1} X_jX_j^\intercal$ constitute a sub-$\psi_{P,b}$ process for any PSD matrix $U_0$. 
\end{lemma}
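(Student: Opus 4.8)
The plan is to verify Definition~\ref{def:sub-psi} directly (with $\lambda_{\max}=\infty$) by taking the dominating supermartingale to be exactly the nonnegative martingale $N^B_t(\theta;\lambda)$ of~\eqref{eq:nsm-mgf}. Concretely, I will show that for every $\theta\in\dsphere$ and every $\lambda\geq 0$,
\[
\exp\big\{\lambda\la\theta,S_t\ra-\psi_{P,b}(\lambda)\la\theta,V_t\theta\ra\big\}\;\leq\;N^B_t(\theta;\lambda).
\]
That $N^B(\theta;\lambda)$ is a genuine nonnegative martingale with $N^B_0(\theta;\lambda)=1$ requires only that each conditional log-MGF $\log\E_{k-1}\exp(\lambda\la\theta,X_k\ra)$ is finite, which is immediate since $|\la\theta,X_k\ra|\leq\|X_k\|\leq b$. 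So the whole content of the lemma reduces to a one-dimensional MGF estimate for the scalar increments $\la\theta,X_k\ra$, promoted to a statement about the conditional second-moment matrix $\E_{k-1}[X_kX_k^{\intercal}]$.

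The MGF estimate is the classical ingredient in Bennett's inequality. The function $g(x):=(e^{x}-x-1)/x^{2}$, extended by $g(0)=\tfrac{1}{2}$, is nondecreasing on all of $\Re$ (see e.g.\ the proof of \citet[Theorem~2.9]{boucheron2013concentration}). Since $\la\theta,X_k\ra\leq b$ almost surely and $\lambda\geq0$, we get $g(\lambda\la\theta,X_k\ra)\leq g(\lambda b)$, which rearranges to the pointwise bound
\[
e^{\lambda\la\theta,X_k\ra}\;\leq\;1+\lambda\la\theta,X_k\ra+\psi_{P,b}(\lambda)\la\theta,X_k\ra^{2},
\qquad\text{using}\qquad \psi_{P,b}(\lambda)=\frac{e^{b\lambda}-b\lambda-1}{b^{2}}=g(\lambda b)\,\lambda^{2}.
\]
Taking $\E_{k-1}$, invoking $\E_{k-1}\la\theta,X_k\ra=0$ and $\E_{k-1}\la\theta,X_k\ra^{2}=\la\theta,\E_{k-1}[X_kX_k^{\intercal}]\,\theta\ra$, and then $1+u\leq e^{u}$, yields the conditional bound $\log\E_{k-1}\exp(\lambda\la\theta,X_k\ra)\leq\psi_{P,b}(\lambda)\,\la\theta,\E_{k-1}[X_kX_k^{\intercal}]\,\theta\ra$.

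Substituting this into each factor of $N^B_t(\theta;\lambda)=\exp\{\lambda\la\theta,S_t\ra-\sum_{k\leq t}\log\E_{k-1}\exp(\lambda\la\theta,X_k\ra)\}$ gives
\[
N^B_t(\theta;\lambda)\;\geq\;\exp\Big\{\lambda\la\theta,S_t\ra-\psi_{P,b}(\lambda)\sum_{k\leq t}\la\theta,\E_{k-1}[X_kX_k^{\intercal}]\,\theta\ra\Big\}\;\geq\;\exp\big\{\lambda\la\theta,S_t\ra-\psi_{P,b}(\lambda)\la\theta,V_t\theta\ra\big\},
\]
where the final inequality uses $\psi_{P,b}(\lambda)\geq0$ together with $\la\theta,U_0\theta\ra\geq0$ (as $U_0$ is PSD), so that replacing $\sum_{k\leq t}\la\theta,\E_{k-1}[X_kX_k^{\intercal}]\,\theta\ra$ by $\la\theta,V_t\theta\ra=\la\theta,U_0\theta\ra+\sum_{k\leq t}\la\theta,\E_{k-1}[X_kX_k^{\intercal}]\,\theta\ra$ only enlarges the subtracted term. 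This is exactly~\eqref{eq:sub-psi-def} with supermartingale $L^{\lambda}_t(\theta)=N^B_t(\theta;\lambda)$, so $(S_t,V_t)$ is sub-$\psi_{P,b}$.

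I do not expect a genuine obstacle here; the argument is the multivariate repackaging of the textbook proof of Bennett's inequality. The two points deserving mild care are (i) that $g$ is monotone on \emph{all} of $\Re$, so that the pointwise bound remains valid for negative values of $\la\theta,X_k\ra$ — we assume only the one-sided bound $\la\theta,X_k\ra\leq b$ — and (ii) the finiteness of the conditional log-MGF, needed for $N^B(\theta;\lambda)$ to be a bona fide martingale; both follow from $|\la\theta,X_k\ra|\leq b$.
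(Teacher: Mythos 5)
Your proof is correct and follows essentially the same route as the paper: both arguments bound the conditional log-MGF by $\psi_{P,b}(\lambda)\la\theta,\E_{k-1}[X_kX_k^\intercal]\theta\ra$ using the monotonicity of $u\mapsto(e^{u}-u-1)/u^{2}$ together with the mean-zero condition (your $1+u\leq e^{u}$ is the paper's $\log x\leq x-1$ in disguise), and then note that the resulting exponential process is dominated by the MGF-normalized martingale $N^B_t(\theta;\lambda)$ and that adding the PSD matrix $U_0$ to $V_t$ only decreases it. The only cosmetic difference is the order of the elementary steps.
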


The proof of Lemma~\ref{lem:bennett-process} may be found in Appendix~\ref{proof:bennett-process}. 
Next we show how to obtain a sub-$\psi$ process when the random vectors obey a Bernstein condition---the inequality in~\eqref{eq:bernstein-assumption}---in each direction. We note that Bernstein's condition is more general than boundedness, in the sense that bounded random vectors satisfy~\eqref{eq:bernstein-assumption} but the converse is not true in general. The proof of the following result is in Appendix~\ref{proof:bernstein-process}.

\begin{lemma}
\label{lem:bernstein-process}
     Let $(X_t)$ be a stream of random vectors with conditional mean 0. Suppose that for some $c>0$ and all $\theta\in\dsphere$, $t\geq 1$,
     \begin{equation}
     \label{eq:bernstein-assumption}
         \E_{t-1}[\la \theta, X_t\ra^q] \leq \frac{q!c^{q-2}}{2} \E_{t-1}\la \theta, X_t \ra^2~\text{~ for all integers~}q\geq 3.
     \end{equation}
     Then $S_t = \sum_{j\leq t} X_j$ and $V_t = U_0+ \sum_{j\leq t} \E_{j-1} X_jX_j^\intercal$ constitute a sub-$\psi_{G,c}$ process for any PSD matrix $U_0$.  
\end{lemma}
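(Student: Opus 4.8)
The plan is to verify the sub-$\psi_{G,c}$ condition of Definition~\ref{def:sub-psi} directly by working with the martingale $N_t^B(\theta;\lambda)$ from~\eqref{eq:nsm-mgf} and bounding the per-step log-MGF by $\psi_{G,c}(\lambda)\la\theta,\E_{k-1}X_kX_k^\intercal\theta\ra$. Fix $\theta\in\dsphere$ and $\lambda\in[0,1/c)$, and write $Y_k = \la\theta,X_k\ra$, so $\E_{k-1}Y_k=0$ and $\E_{k-1}Y_k^q \leq \tfrac{q!c^{q-2}}{2}\E_{k-1}Y_k^2$ for $q\geq 3$. The first step is the standard scalar Bernstein MGF computation: expand $\E_{k-1}\exp(\lambda Y_k) = 1 + \sum_{q\geq 2}\tfrac{\lambda^q}{q!}\E_{k-1}Y_k^q$, use the $q=2$ term as is and the moment assumption for $q\geq 3$ to get
\begin{equation*}
\E_{k-1}\exp(\lambda Y_k) \leq 1 + \frac{\lambda^2}{2}\E_{k-1}Y_k^2\Big(1 + \sum_{q\geq 3}(c\lambda)^{q-2}\Big) = 1 + \frac{\lambda^2 \E_{k-1}Y_k^2}{2(1-c\lambda)},
\end{equation*}
and then apply $1+x\leq e^x$ to conclude $\log\E_{k-1}\exp(\lambda Y_k)\leq \psi_{G,c}(\lambda)\,\E_{k-1}Y_k^2 = \psi_{G,c}(\lambda)\,\la\theta,\E_{k-1}[X_kX_k^\intercal]\theta\ra$.

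Next I would assemble these per-step bounds into the process-level statement. Define $L_t(\theta) = \exp\{\lambda\la\theta,S_t\ra - \psi_{G,c}(\lambda)\la\theta,(V_t-U_0)\theta\ra\}$ (note $V_t - U_0 = \sum_{j\leq t}\E_{j-1}X_jX_j^\intercal$), with $L_0(\theta)=1$. The key computation is
\begin{equation*}
\E_{t-1}[L_t(\theta)] = L_{t-1}(\theta)\,\E_{t-1}\exp\big\{\lambda Y_t - \psi_{G,c}(\lambda)\la\theta,\E_{t-1}[X_tX_t^\intercal]\theta\ra\big\} = L_{t-1}(\theta)\,\frac{\E_{t-1}\exp(\lambda Y_t)}{\exp(\psi_{G,c}(\lambda)\E_{t-1}Y_t^2)} \leq L_{t-1}(\theta),
\end{equation*}
using the per-step MGF bound from the previous step, so $(L_t(\theta))$ is a nonnegative supermartingale. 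Finally, since $U_0\succeq 0$ we have $\psi_{G,c}(\lambda)\la\theta,U_0\theta\ra\geq 0$, hence
\begin{equation*}
\exp\big\{\lambda\la\theta,S_t\ra - \psi_{G,c}(\lambda)\la\theta,V_t\theta\ra\big\} = L_t(\theta)\exp\big\{-\psi_{G,c}(\lambda)\la\theta,U_0\theta\ra\big\} \leq L_t(\theta),
\end{equation*}
which is exactly~\eqref{eq:sub-psi-def} with variance proxy $(V_t)$ and the supermartingale $L_t(\theta)$.

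I expect the only genuinely delicate point to be justifying the interchange of expectation and infinite sum in the MGF expansion, and confirming that the geometric series $\sum_{q\geq 3}(c\lambda)^{q-2}$ converges — both of which are fine for $\lambda\in[0,1/c) = [0,\lambda_{\max})$, with absolute convergence following from the moment bound and nonnegativity of the terms (Tonelli); one should also note the edge behavior as $\lambda\to 0^+$ matches $\psi_{G,c}(0)=0$. The rest is routine bookkeeping, essentially the multivariate lifting of the classical scalar Bernstein argument applied coordinate-wise along each direction $\theta\in\dsphere$, so there is no real obstacle — the structure mirrors the proof of Lemma~\ref{lem:bennett-process} with $\psi_{P,b}$ replaced by $\psi_{G,c}$ and boundedness replaced by the Bernstein moment condition.
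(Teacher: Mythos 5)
Your proposal is correct and follows essentially the same route as the paper: the directional Bernstein MGF bound (Taylor expansion plus the moment condition and a geometric series, valid for $\lambda\in[0,1/c)$) yielding $\log\E_{k-1}\exp(\lambda\la\theta,X_k\ra)\leq\psi_{G,c}(\lambda)\E_{k-1}\la\theta,X_k\ra^2$, assembled into the exponential supermartingale and with the PSD matrix $U_0$ absorbed at the end since it only decreases the process. One small caveat on your side remark: the summands $\lambda^q\la\theta,X_k\ra^q/q!$ need not be nonnegative (odd $q$, negative inner product), so Tonelli does not apply directly; the interchange is instead justified by dominating with $e^{\lambda|\la\theta,X_k\ra|}$, whose finiteness follows from the even moments via Cauchy--Schwarz -- a point the paper also passes over silently.
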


Using these two results in conjunction with Theorem~\ref{thm:sub-psi} we obtain the following line crossing inequalities. Figure~\ref{fig:psi_functions} plots $\psi^{-1}_{P,1}$ versus $\psi^{-1}_{G,1}$ to give a sense of the constraints on $\lambda$ in the following result. It also plots $\psi_{G,1}$ versus $\psi_{P,1}$ to indicate how~\eqref{eq:bennett-line-crossing} scales compared to~\eqref{eq:bernstein-line-crossing}. 

\begin{corollary}
\label{cor:bennett-and-bernstein}
Let $(X_t)$ be a sequence of random vectors with conditional mean 0. Let $U_0$ be positive definite. 
    Suppose either that (i) $\sup_{\theta\in\dsphere} \la \theta, X_t\ra\leq b$ for all $t\geq 1$ and some $b>0$ or (ii) that~\eqref{eq:bernstein-assumption} holds. Let $S_t = \sum_{k\leq t} X_k$ and $V_t = \sum_{k\leq t} \E_{k-1}[X_kX_k^\intercal]$. Fix any $\delta\in(0,1)$ and let 
    \begin{equation}
        D_t = \frac{1}{2}\log\left(\frac{\det (V_\tau+U_0)}{\det U_0}\right) + 1  + \log(1/\delta). 
    \end{equation}
    If (i) holds, then for any $\lambda \in [\psi_{P,b}^{-1}(1/\gamma_{\min}(U_0)), \infty)$, with probability $1-\delta$, for any stopping time $\tau$, 
    \begin{equation}
    \label{eq:bennett-line-crossing}
         \|S_\tau\|_{(V_\tau+U_0)^{-1}} \leq \frac{D_t}{\lambda g_{\psi_{P,b},\tau}(\lambda)} + \frac{g_{\psi_{P,b},\tau}(\lambda) \psi_{P,b}(\lambda)}{\lambda},
    \end{equation}
    where $g_{\psi_{P,b}\tau}$ is defined as in~\eqref{eq:gt}. If (ii) holds, then for any $\lambda\in[\psi_{G,c}^{-1}(1/\gamma_{\min}(U_0)), 1/c)$, with probability $1-\delta$, for any stopping time $\tau$, 
    \begin{equation}
    \label{eq:bernstein-line-crossing}
         \|S_\tau\|_{(V_\tau+U_0)^{-1}} \leq \frac{D_t}{\lambda g_{\psi_{G,c},\tau}(\lambda)} + \frac{g_{\psi_{G,c},\tau}(\lambda) \psi_{G,c}(\lambda)}{\lambda}.
    \end{equation}
\end{corollary}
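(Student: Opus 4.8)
The plan is to derive both inequalities as direct consequences of Theorem~\ref{thm:sub-psi}, so the entire argument reduces to verifying that its hypotheses hold under assumptions (i) and (ii) respectively, and then reading off the conclusion. First I would invoke Lemma~\ref{lem:bennett-process}: under assumption (i), the pair $(S_t, V_t)$ with $S_t = \sum_{k\leq t} X_k$ and $V_t = U_0 + \sum_{k\leq t}\E_{k-1}[X_kX_k^\intercal]$ is a sub-$\psi_{P,b}$ process. Similarly, under assumption (ii), Lemma~\ref{lem:bernstein-process} gives that the same $(S_t,V_t)$ is a sub-$\psi_{G,c}$ process. In both cases $V_t = U_0 + (\text{PSD})\succeq U_0$ for all $t\geq 1$, and $U_0$ is positive-definite by hypothesis, so the structural requirement $V_t\succeq U_0$ of Theorem~\ref{thm:sub-psi} is met with the given $U_0$; moreover $\|U_0\|_\op = \rho$ by definition.

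Next I would check the super-Gaussianity condition. Theorem~\ref{thm:sub-psi} requires $\lambda\mapsto\psi(\lambda)/\lambda^2$ to be nondecreasing. For $\psi_{G,c}(\lambda) = \lambda^2/(2(1-c\lambda))$ we have $\psi_{G,c}(\lambda)/\lambda^2 = 1/(2(1-c\lambda))$, which is visibly nondecreasing on $[0,1/c)$. For $\psi_{P,b}(\lambda) = (e^{b\lambda} - b\lambda - 1)/b^2$, one has $\psi_{P,b}(\lambda)/\lambda^2 = \sum_{k\geq 0} b^k\lambda^k/(k+2)!$ (Taylor expansion), which has nonnegative coefficients and is therefore nondecreasing on $[0,\infty)$; these are both noted as super-Gaussian in the examples of Section~\ref{sec:prelims}. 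Then I would match the admissible range of $\lambda$: Theorem~\ref{thm:sub-psi} asks for $\lambda\in[\psi^{-1}(1/\gamma_{\min}(U_0)),\lambda_{\max})\cap\im(\psi)$. For $\psi_{P,b}$ we have $\lambda_{\max}=\infty$ and $\im(\psi_{P,b}) = \Re_{\geq 0}$, so this set is exactly $[\psi_{P,b}^{-1}(1/\gamma_{\min}(U_0)),\infty)$, matching the statement of~\eqref{eq:bennett-line-crossing}. For $\psi_{G,c}$ we have $\lambda_{\max}=1/c$ and $\im(\psi_{G,c}) = \Re_{\geq 0}$, giving $[\psi_{G,c}^{-1}(1/\gamma_{\min}(U_0)), 1/c)$, matching~\eqref{eq:bernstein-line-crossing}.

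With all hypotheses verified, applying Theorem~\ref{thm:sub-psi} with $\psi=\psi_{P,b}$ (resp.\ $\psi=\psi_{G,c}$) yields precisely~\eqref{eq:sub-psi-bound} with $\|U_0\|_\op^{1/2}$ replaced by $\sqrt{\rho}$, which is~\eqref{eq:bennett-line-crossing} (resp.~\eqref{eq:bernstein-line-crossing}); the function $g_{\psi_{P,b},\tau}$ (resp.\ $g_{\psi_{G,c},\tau}$) is the one defined in~\eqref{eq:gt} with $\alpha_\tau = \alpha(U_0,V_\tau)$, as claimed. There is essentially no obstacle here beyond bookkeeping — the content is entirely in Lemmas~\ref{lem:bennett-process} and~\ref{lem:bernstein-process} (whose proofs are deferred to the appendix) and in Theorem~\ref{thm:sub-psi} itself. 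The only mild subtlety worth a sentence in the write-up is confirming that $\im(\psi_{P,b}) = \im(\psi_{G,c}) = \Re_{\geq 0}$ so that the intersection with $\im(\psi)$ in Theorem~\ref{thm:sub-psi} is vacuous and the stated $\lambda$-intervals are correct; this follows since both $\psi$ functions are continuous, vanish at $0$, and diverge to $+\infty$ as $\lambda\to\lambda_{\max}$.
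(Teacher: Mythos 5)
Your proposal is correct and follows exactly the paper's route: the corollary is obtained by invoking Lemma~\ref{lem:bennett-process} (resp.\ Lemma~\ref{lem:bernstein-process}) to certify the sub-$\psi_{P,b}$ (resp.\ sub-$\psi_{G,c}$) property and then instantiating Theorem~\ref{thm:sub-psi}. Your extra bookkeeping (super-Gaussianity of both $\psi$ functions, $\im(\psi)=\Re_{\geq 0}$, and the $\lambda$-ranges) is accurate and matches what the paper leaves implicit.
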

Since the process is sub-gamma when $(X_t)$ satisfies the Bernstein condition in~\eqref{eq:bernstein-assumption}, we can in that case apply Theorem~\ref{thm:stitching-simplified} to obtain the stitched boundary in~\eqref{eq:sub-gamma-bound}. 
In particular, by carefully applying the line-crossing inequality above with different values of $\lambda$ at different times, we are able to apply~\eqref{eq:bernstein-line-crossing} with $\lambda$ as roughly $\sqrt{D_t}/(1 + c\sqrt{D_t})$, up to constant multipliers on $D_t$. This gives  
$\|S_\tau\|_{(V_\tau + U_0)^{-1}}$ scaling like $cD_\tau + \sqrt{D_t}$ with high probability. This combination of linear and sublinear terms is what we expect of a Bernstein bound.

\begin{figure}
    \centering
    \ifarxiv
    \begin{subfigure}[b]{0.35\textwidth}
    \centering
    \includegraphics[width=\linewidth]{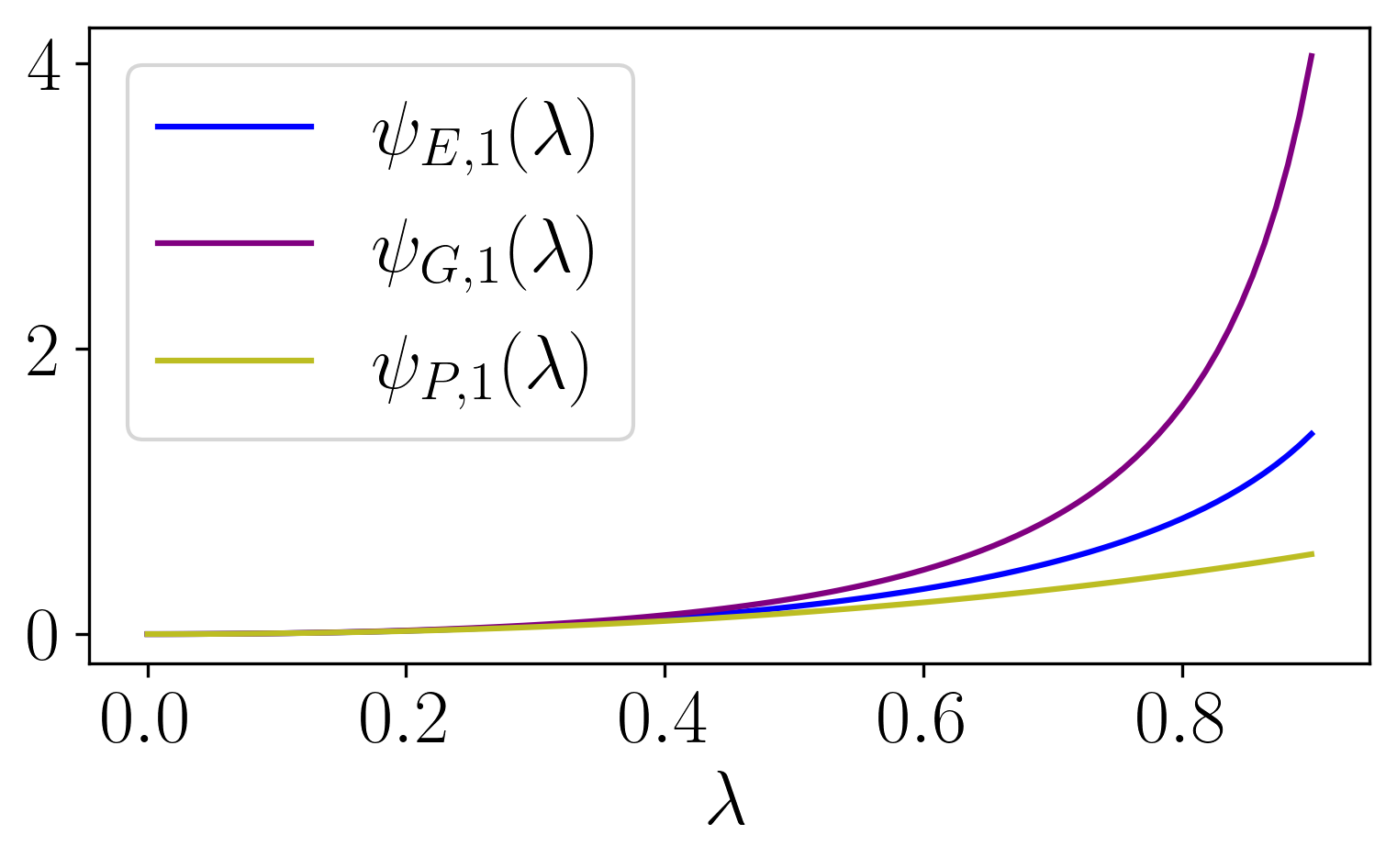}
    \caption{}
    \label{fig:psiE_vs_psiG}
  \end{subfigure}
  \hspace{1cm}
  \begin{subfigure}[b]{0.35\textwidth}
      \centering 
      \includegraphics[width=\linewidth]{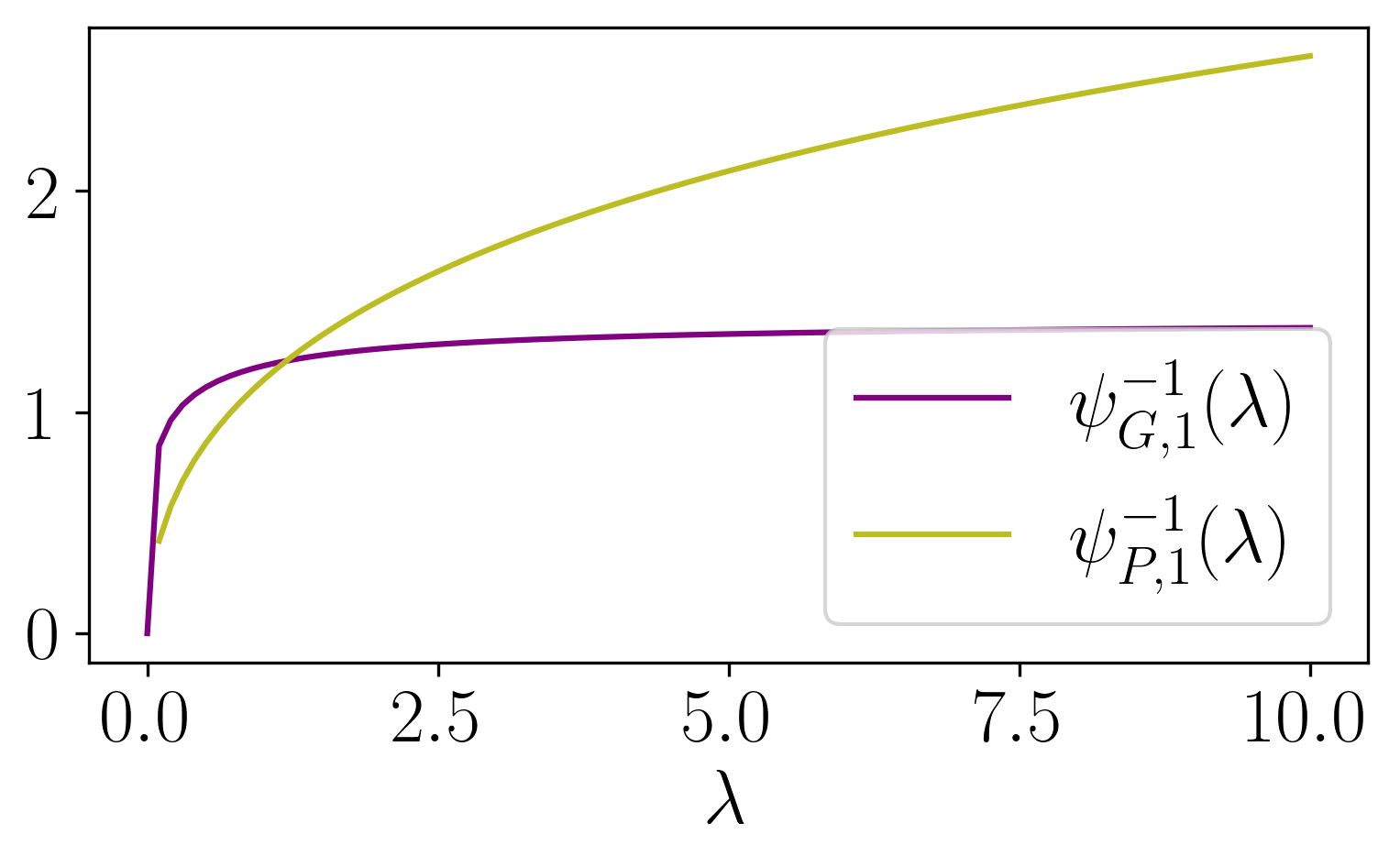}
      \caption{}
  \end{subfigure}
  \else 
  \subfigure[\label{fig:psiE_vs_psiG}]{\includegraphics[width=0.45\linewidth]{figures/psiE_vs_psiG_vs_psiP.png}}
  \subfigure[]{\includegraphics[width=0.45\linewidth]{figures/psi_inverse.png}}
  \fi 
    \caption{
    \emph{Left:}  Comparison of $\psi_{E,1}$,  $\psi_{G,1}$, and $\psi_{P,1}$.  As shown in Lemma~\ref{lem:psiE<=psiG}, $\psi_{G,1}$ dominates $\psi_{E,1}$ for all $\lambda\in[0,1)$. \emph{Right:} Comparison of $\psi_{P,1}^{-1}$ and $\psi_{G,1}^{-1}$, which define the bounds for $\lambda$ in Corollary~\ref{cor:bennett-and-bernstein}. }
    \label{fig:psi_functions}
\end{figure}

\fi

\section{An Empirical Bernstein Inequality}
\label{sec:emp-bernstein}

In this section we provide a self-normalized ``empirical Bernstein'' result. Empirical Bernstein bounds are useful because they do not require a priori knowledge of the variance in order to be instantiated. Instead, they adapt to the unknown variance. Such bounds have been studied in scalar random variables~\citep{maurer2009empirical,waudby2024estimating,howard2021time,chugg2025closed}, vector-valued random variables~\citep{kohler2017sub,martinez2024empirical,martinez2025sharp,chugg2025time}, u-statistics~\citep{peel2010empirical}, and matrices~\citep{wang2024sharp,chugg2025closed}. As far as we are aware, \citet{whitehouse2023time} gave the first empirical Bernstein result for self-normalized, vector-valued processes. Ours is thus the first dimension-free, self-normalized, empirical Bernstein inequality. 

Let $(X_t)$ be a stream of random vectors with conditional mean $\mu$. Suppose that $\|X_t\|\leq 1/2$ (one can replace 1/2 with any constant and rescale the bound accordingly). Recall that $\psi_{E,1}(\lambda) = -\log(1-\lambda) - \lambda$. For any adapted sequence $(\muhat_t)_{t\geq 0}$,\footnote{That is, $\muhat_t$ is $\calF_t$-measurable.} the process defined by 
\begin{equation}
\label{eq:emp-bernstein-nsm}
    N_t(\theta) = \prod_{k\leq t}\exp\left\{\lambda \la \theta, X_k-\mu\ra - \psi_{E,1}(\lambda)\la \theta, X_k - \widehat{\mu}_{k-1}\ra^2\right\},\quad N_0(\theta) = 1.
\end{equation} 
is a nonnegative supermartingale (see \citet[Lemma E.1]{chugg2025time} for an explicit proof). 
It is a multivariate extension of a supermartingale used by \citet{howard2021time}, which they use to prove an empirical Bernstein bound for scalar random variables. This supermartingale is, in turn, an extension of an inequality of~\citet{fan2015exponential}, where the mean is replaced by the key term $\widehat{\mu}_{k-1}$ in the empirical variance. This reduces the variance when the mean of $X_k$ is nonzero and delivers a certain asymptotic sharpness property of the resulting confidence set widths in scalar, vector, and matrix settings~\citep{waudby2024estimating,martinez2024empirical,wang2024sharp}.  

That the process in~\eqref{eq:emp-bernstein-nsm} is a supermartingale implies that $(S_t,V_t)$ is a sub-$\psi_{E,1}$ process where
\begin{equation}
\label{eq:eb-process}
    S_t = \sum_{k\leq t}(X_k-\mu), \quad V_t = \Gamma +  \sum_{k\leq t} (X_k - \widehat{\mu}_{k-1})(X_k - \widehat{\mu}_{k-1})^\intercal,
\end{equation}
and $\Gamma$ is any fixed PSD matrix (could be the zero matrix). 
Since $\psi_{E,1}$ is super-Gaussian, we may apply Theorem~\ref{thm:sub-psi} to obtain the following result. 

\begin{corollary}
\label{cor:empirical-bernstein}
Let $(X_t)$ be a stream of random vectors such that $\|X_t\|_2\leq 1/2$ for all $t\geq 1$. Let $U_0$ be positive definite and $V_t$ be as above. 
Let $g_t(z) = g_{\psi_{E,1},t}(z)$ for $g_{\psi,t}$ as in~\eqref{eq:gt}.  
    Then, for any $\delta\in(0,1)$ and any $\lambda\in[\psi_{E,1}^{-1}(1/\gamma_{\min}(U_0)),1)$, with probability $1-\delta$, for any stopping time $\tau$, 
    \begin{equation*}
         \bigg\|\sum_{k\leq \tau} (X_k - \mu)\bigg\|_{(V_\tau+U_0)^{-1}} \leq \frac{1}{\lambda g_{\tau}(\lambda)}\left(\frac{1}{2}\log\left(\frac{\det (V_\tau+U_0)}{\det U_0}\right) + 1  + \log(1/\delta)\right) + \frac{g_{\tau}(\lambda) \psi_{E,1}(\lambda)}{\lambda}.
    \end{equation*}
\end{corollary}
The upper bound of 1 on $\lambda$ comes from the fact that $\lambda_{\max}=1$ for $\psi_{E,1}$. Note that $\psi^{-1}(1/\kappa) < 1$ for all $\kappa>0$ so there is no constraint on the minimum eigenvalue of $U_0$ other than positivity.  

We added the PSD matrix $\Gamma$ to the process $(V_t)$ in Corollary~\ref{cor:empirical-bernstein} because we can often obtain better bounds in practice by splitting $U_0 = \Gamma + U_0'$. To elaborate, suppose we want to bound $\|S_t\|_{(V_t + U_0)^{-1}}$ for some fixed $U_0$ using Corollary~\ref{cor:empirical-bernstein}. If the $X_k$ are mostly pointing in one direction, then $V_t$ will be low-rank, in which case $\alpha_t = \alpha(U_0, U_0 + V_t)\approx 1$ and $g_t(\lambda)\approx 0$, blowing up the bound. Thus, applying the bound as stated can lead to vacuous results, at least until $\gamma_{\min}(V_t)$ grows sufficiently large. However, we can also apply the bound with $U_0' = (1-\eps)U_0$ and $\Gamma = \eps U_0$ for any $0<\eps<1$, which ensures that $\alpha_t$ is bounded away from 1. Of course, this is bounding the same process since $V_t + \Gamma + U_0' = V_t + U_0$ by construction.   This leads to much better results in practice. 

Figure~\ref{fig:emp-bern} employs this stabilization trick and compares Corollary~\ref{cor:empirical-bernstein} to the self-normalized empirical Bernstein bound of \citet[Theorem 4.1]{whitehouse2023time}. Translated to our setting, this latter bounds reads: For all $\delta\in(0,1)$, with probability $1-\delta$, for all stopping times $\tau$, 
\begin{equation}
    \label{eq:whitehouse-eb-bound}
    \|S_t\|_{(V_\tau +U_0)^{-1}} \leq 2\sqrt{\gamma_{\min}(V_\tau +U_0)} \cdot (\psi_{E,1}^*)^{-1}\left(\frac{2 L(V_\tau)}{\gamma_{\min}(V_\tau +U_0)}\right), 
\end{equation}
where 
\ifarxiv
\begin{equation*}
    L(V_t) = \log\left(\frac{2}{\delta}\right) + \log\left(h\left(\log_2\left(\frac{\gamma_{\max}(V_t + U_0)}{\gamma_{\min}(U_0)}\right)\right)\right) + \log\left(2\sqrt{\kappa_t}\cdot N_{d-1}\left(\frac{1/4}{\sqrt{\kappa_t}}\right)\right), 
\end{equation*}
\else 
$L(V_t) = \log\left(\frac{2}{\delta}\right) + \log\left(h\left(\log_2\left(\frac{\gamma_{\max}(V_t + U_0)}{\gamma_{\min}(U_0)}\right)\right)\right) + \log\left(2\sqrt{\kappa_t}\cdot N_{d-1}\left(\frac{1/4}{\sqrt{\kappa_t}}\right)\right), $
\fi 
and $\kappa_t = \kappa(V _t+ U_0)$ is the condition number of $V_t + U_0$. We have simplified the bound somewhat by fixing certain parameters that are left as variables in the original. Here $N_{d-1}(\eps)$ is the $\eps$-covering number of the unit sphere $\dsphere$, i.e., the minimum size of an $\eps$-cover of $\dsphere$. This is where the dimension-dependence comes from, as $N_{d-1}(\eps) = \Omega(\eps^{-(d-1)})$. 

When comparing our bound with~\eqref{eq:whitehouse-eb-bound} 
we take $(\muhat_t)$ to be the sequence of sample means, i.e., $\muhat_t = t^{-1}\sum_{k\leq t}X_k$.  
The results  echo those in Section~\ref{sec:super-gaussian}: our bound beats the condition number-based bound of Whitehouse et al.\ insofar as $\log\det(V_t)\lesssim d\log(\kappa(V_t))$, where $\kappa(V_t)$ is the condition number of $V_t$. As the determinant grows relative to the condition number, our bound becomes looser than~\eqref{eq:whitehouse-eb-bound}. In practice, if we do not know how the determinant will grow, we might use a union bound to take advantage of both bounds. \ifarxiv\else Just as in Section~\ref{sec:super-gaussian-stitching}, we can use stitching to obtain a bound that does not depend on $\lambda$. We provide the details in Appendix~\ref{sec:eb_stitching}. \fi

\begin{figure}[t]
    \centering
    \ifarxiv
    \begin{subfigure}[t]{0.32\textwidth}
    \includegraphics[height=3.5cm]{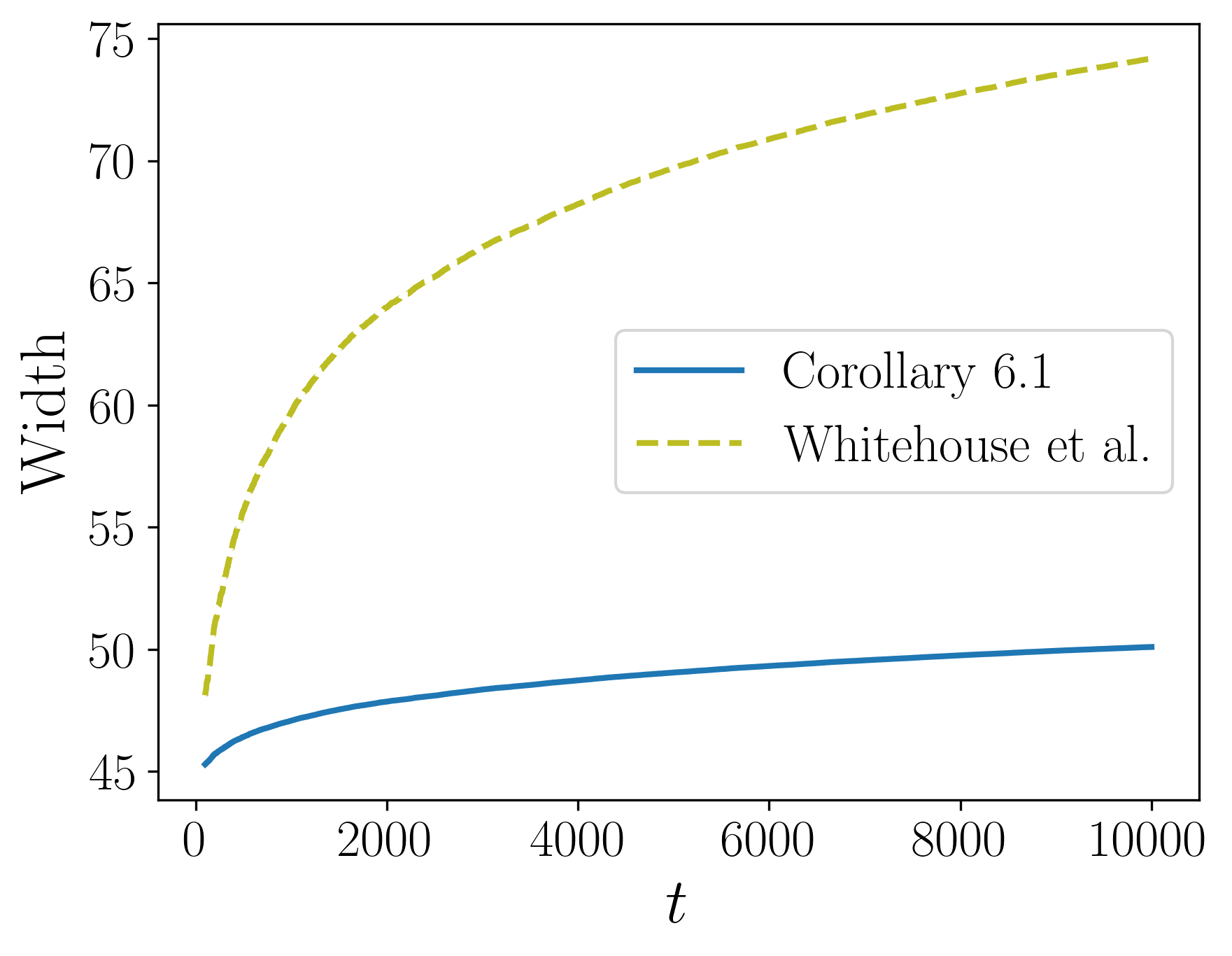}
    \caption{$k=2, \; d=20$}
    \end{subfigure}
    \begin{subfigure}[t]{0.32\textwidth}
    \includegraphics[height=3.5cm]{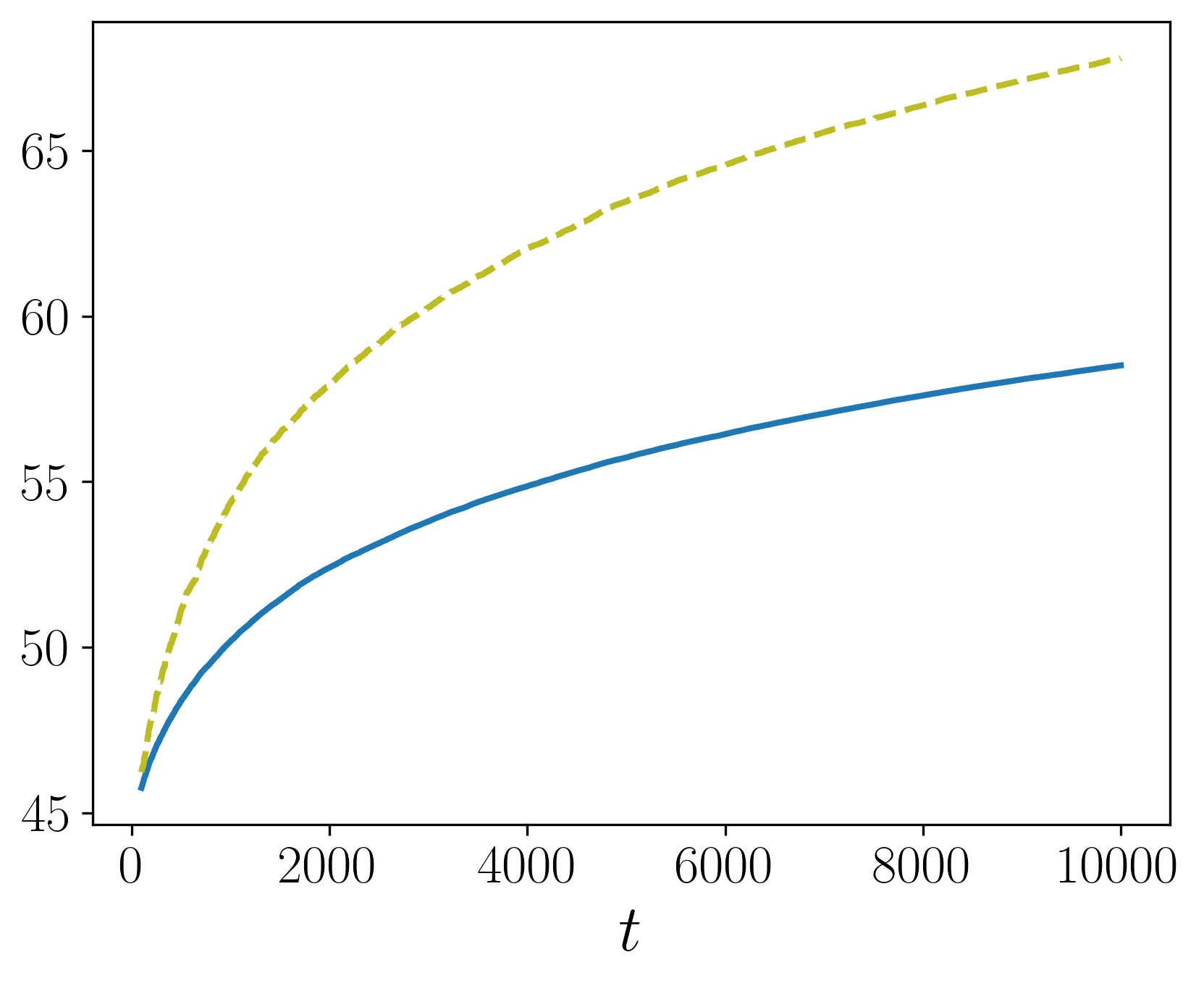}
    \caption{$k=5,\; d=20$}
    \end{subfigure}
    \begin{subfigure}[t]{0.32\textwidth}
    \includegraphics[height=3.5cm]{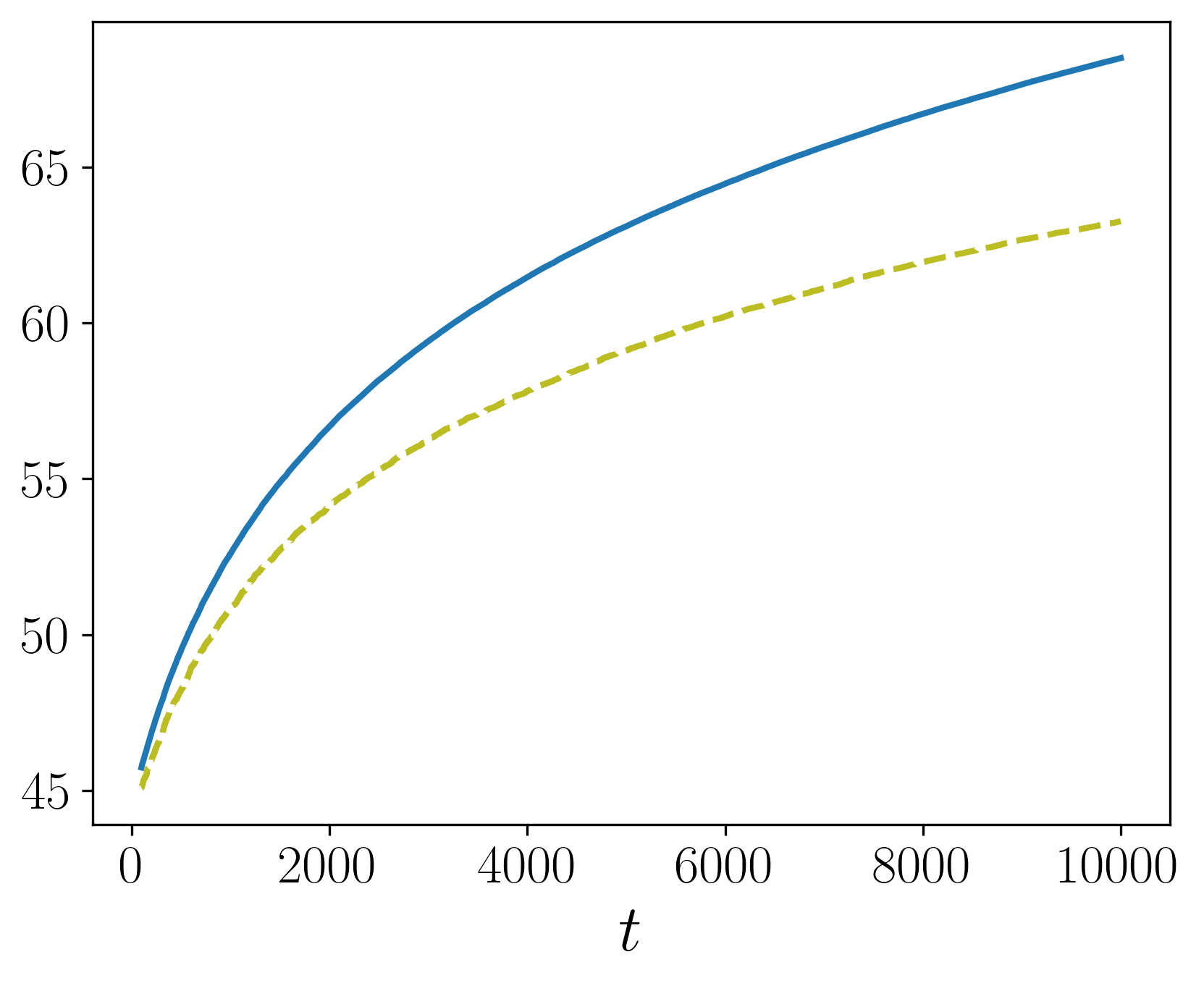}
    \caption{$k=10,\;d=20$}
    \end{subfigure}
    \else 
    \subfigure[$k=2, \; d=20$]{\includegraphics[width=0.33\textwidth]{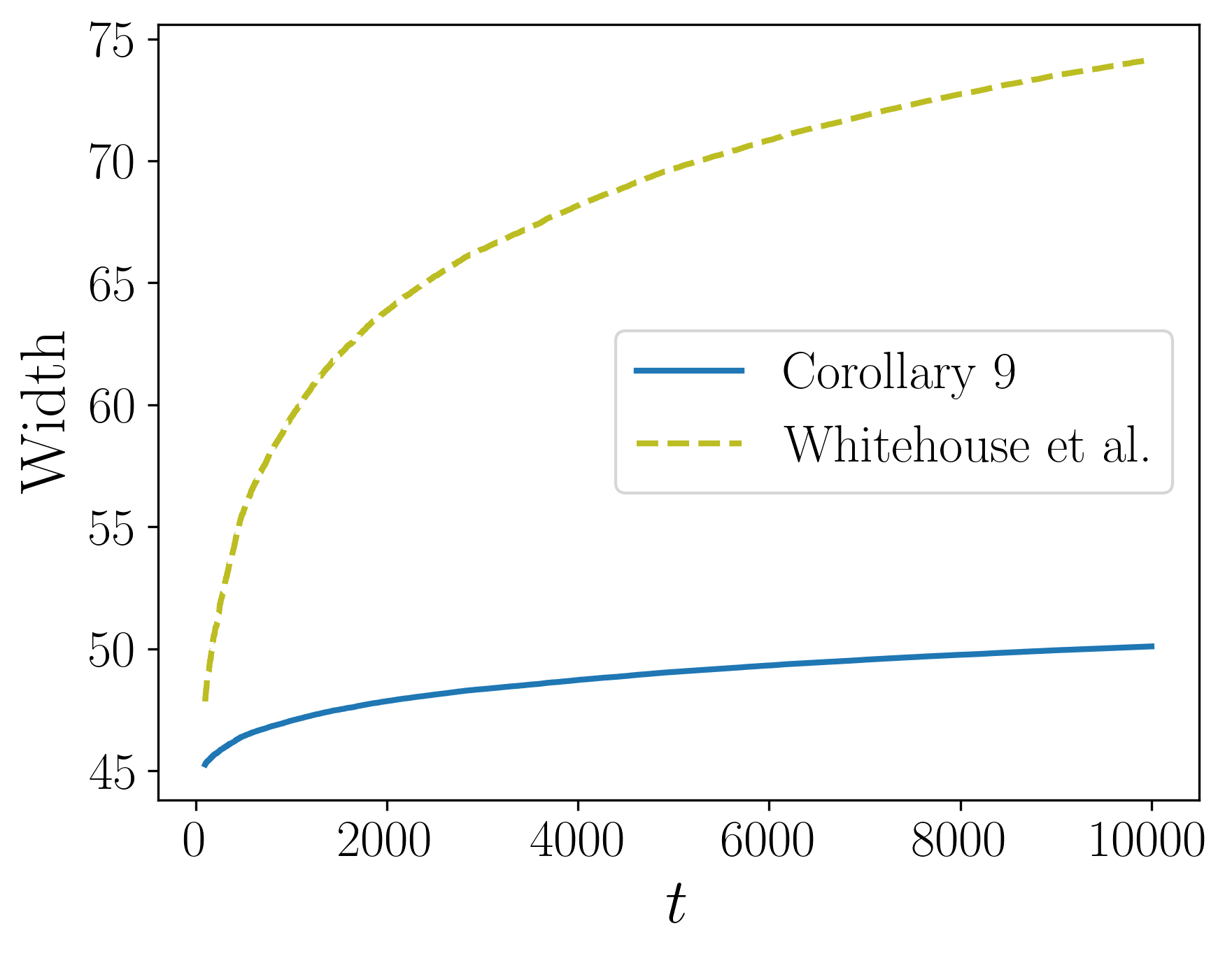}}
    \subfigure[$k=5, \; d=20$]{\includegraphics[width=0.31\textwidth]{figures/empbern_eb_rank_5_d_20.png}}
    \subfigure[$k=10, \; d=20$]{\includegraphics[width=0.31\textwidth]{figures/empbern_eb_rank_10_d_20.png}}
    \fi 
    \caption{Comparison of our empirical Bernstein bound, Corollary~\ref{cor:empirical-bernstein} (blue line)
    \label{fig:eb_bounds}, with the empirical Bernstein bound of \citet{whitehouse2023time} (dotted line). Here each $X_t$ is generated to have random uniform noise in $k$ directions, so $V_{t+1} = V_t + (X_{t+1} - \muhat_{t})(X_{t+1} - \muhat_{t})^\top$ acts roughly as a rank-$k$ update. As $k$ grows, the determinant grows relative to the condition number, thus our determinant-based bound deteriorates with respect to the condition number bound. Simulation details can be found in Appendix~\ref{app:experimental-details}. 
    }
    \label{fig:emp-bern}
\end{figure}

\ifarxiv
In order to remove the dependence on $\lambda$ in Corollary~\ref{cor:empirical-bernstein}, we may again turn to stitching. Since stitching with $\psi = \psi_{E,1}$ is difficult to do directly, we instead upper bound $\psi_{E,c}$ with $\psi_{G,c}$ using the following lemma, and then apply Theorem~\ref{thm:stitching-simplified}.
\ifarxiv \else 
\section{Stitched Empirical Bernstein Bound}
\label{sec:eb_stitching}
\fi 

Here we provide the details on the stitched empirical Bernstein bound. 

\begin{lemma}
\label{lem:psiE<=psiG}
    For all $\lambda\in[0,1)$, $\psi_{E,1}(\lambda)\leq \psi_{G,1}(\lambda)$.
\end{lemma}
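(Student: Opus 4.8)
The plan is to reduce the claim to an elementary one-variable inequality and dispatch it by a power-series comparison, which I expect to be the cleanest route. Writing out the definitions, we must show that for all $\lambda\in[0,1)$,
\begin{equation*}
    -\log(1-\lambda)-\lambda \;\leq\; \frac{\lambda^2}{2(1-\lambda)}.
\end{equation*}
Both sides admit convergent power-series expansions on $[0,1)$ with nonnegative coefficients: on the left, $-\log(1-\lambda)-\lambda=\sum_{k\geq 2}\lambda^k/k$, and on the right, $\frac{\lambda^2}{2(1-\lambda)}=\frac{\lambda^2}{2}\sum_{j\geq 0}\lambda^j=\sum_{k\geq 2}\lambda^k/2$. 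Since $k\geq 2$ implies $1/k\leq 1/2$ and $\lambda^k\geq 0$ for $\lambda\in[0,1)$, the inequality holds term by term, and summing yields the claim. This also makes transparent \emph{why} $\psi_{G,1}$ dominates $\psi_{E,1}$: $\psi_{G,1}$ has the ``worst-case'' coefficient $1/2$ in every degree, matching $\psi_{E,1}$ only in the leading quadratic term.

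As an alternative (useful if one prefers to avoid invoking term-by-term comparison of series), I would set $f(\lambda)=\psi_{G,1}(\lambda)-\psi_{E,1}(\lambda)=\frac{\lambda^2}{2(1-\lambda)}+\log(1-\lambda)+\lambda$, note $f(0)=0$, and compute $f'$. After putting the three pieces over the common denominator $2(1-\lambda)^2$, the numerator collapses to $\lambda^2$, so $f'(\lambda)=\frac{\lambda^2}{2(1-\lambda)^2}\geq 0$ on $[0,1)$; hence $f$ is nondecreasing and $f\geq f(0)=0$ throughout. Either argument is short, so I would include the power-series version in the main text and perhaps mention the derivative computation as a remark.

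There is no real obstacle here: the statement is a routine scalar inequality, and the only ``choice'' is presentational — whether to argue by series coefficients, by monotonicity of the difference, or by convexity. I would go with the series comparison since it is the most self-contained and also foreshadows the role $\psi_{G,c}$ plays as a universal sub-$\psi$ upper bound (cf.\ the discussion of \citet[Proposition 1]{howard2020time} in Section~\ref{sec:prelims}).
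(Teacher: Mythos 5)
Your proposal is correct, and in fact it contains the paper's proof verbatim as your ``alternative'': the paper argues exactly by setting $h(\lambda)=\psi_{G,1}(\lambda)-\psi_{E,1}(\lambda)$, computing $h'(\lambda)=\lambda^2/[2(1-\lambda)^2]\geq 0$, and concluding from $h(0)=0$ that $h\geq 0$ on $[0,1)$. Your preferred primary route, the power-series comparison $\sum_{k\geq 2}\lambda^k/k \leq \sum_{k\geq 2}\lambda^k/2$, is a genuinely (if mildly) different argument and is also correct; its advantage is that it exhibits the coefficient-wise reason for the domination and makes visible that equality of the two $\psi$ functions holds only in the quadratic term, which connects nicely to the role of $\psi_{G,c}$ as a universal upper bound on CGF-like functions. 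The derivative argument is marginally shorter and avoids any appeal to term-by-term comparison of series, which is presumably why the paper chose it; either version suffices.
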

\begin{proof}
    Set $h(\lambda) = \psi_{G,c}(\lambda) - \psi_{E,1}(\lambda)$, $0\leq \lambda<1$. Observe that $h'(\lambda) = \lambda^2/[2(1-\lambda)^2]\geq 0$, implying that $h(\lambda)$ is increasing in $\lambda$. Since $h(0) = 0$ this implies that $h(\lambda)\geq 0$ for $\lambda$ in the desired range. 
\end{proof}

As illustrated by Figure~\ref{fig:psiE_vs_psiG}, for small $\lambda$, $\psi_{E,1}$ and $\psi_{G,1}$ are nearly identical. They separate more as $\lambda$ grows (note that $\lambda_{\max}=1$). Using Lemma~\ref{lem:psiE<=psiG} and applying Theorem~\ref{thm:stitching-simplified} with $(S_t,V_t)$ as in~\eqref{eq:eb-process} gives the following result.

\begin{corollary}
    \label{cor:empirical-bernstein-stitched}
Suppose $\|X_t\|_2\leq 1/2$. Let $V_t =  \sum_{k=1}^t (X_k- \muhat_{k-1})(X_k - \muhat_{k-1})^\intercal$ and let $U_0$ be positive definite with $\upsilon = \gamma_{\min}(U_0)$. Fix $\delta\in(0,1/e]$. 
    Then, with probability $1-\delta$, for all stopping times $\tau$, 
    \begin{equation}
        \|S_\tau\|_{(V_\tau+U_0)^{-1}} \leq \frac{D_{\tau} + 1.51\sqrt{D_{\tau}} + 0.74\max\left\{\frac{1 + \sqrt{1+ 2\upsilon}}{2\upsilon}, \sqrt{\frac{D_\tau}{2}}\right\}}{H_{\tau}}, 
    \end{equation}
    where 
    \begin{equation}
    \label{eq:Dtau-eb-stitching}
        D_\tau = \frac{1}{2}\log\left(\frac{\det( V_\tau+U_0)}{\det U_0}\right) + 1.5 + 2\log(\log_2(\det (V_\tau+U_0))+1) + \log(1/\delta), 
    \end{equation}
    and 
    \begin{equation}
    \label{eq:Htau-eb-stitching}
        H_{\tau} = 0\vee \left(0.42 - 0.86\sqrt{\frac{\gamma_{\max}(U_0)}{\gamma_{\min}(V_\tau + U_0)}} \right).
    \end{equation}
\end{corollary}
The precise numerical values in the bounds come from instantiating the various constants in Theorem~\ref{thm:stitching-simplified} with $c=1$ and simplifying. As with the stitching results in Section~\ref{sec:super-gaussian-stitching}, Corollary~\ref{cor:empirical-bernstein-stitched} is mostly of theoretical interest to understand the rate of our bounds. In practice we suggest using Corollary~\ref{cor:empirical-bernstein}.

\fi

\section{Conclusion}
\label{sec:summary}
This work has asked and answered the question: Can self-normalized, dimension-free, bounds be given for processes beyond the sub-Gaussian case? We provide such bounds for sub-$\psi$ processes, a general class of stochastic process which includes sub-exponential, sub-Poisson, sub-gamma, and sub-Gaussian distributions. Our methods are based on the variational approach to concentration, a relatively new and (we believe) exciting approach to multivariate concentration.  

Our results bridge a gap in the literature between determinant-based bounds, previously confined to sub-Gaussian or bounded observations~\citep{abbasi2011improved,akhavan2025bernstein,ziemann2025vector,metelli2025generalized}, and the dimension-dependent, condition number-based bounds of \citet{whitehouse2023time}. We show that one can enjoy both dimension-free bounds \emph{and} the distributional generality provided by Whitehouse et al.

The story does not end here. For one, as discussed in Section~\ref{sec:suboptimal}, we expect that our bounds are suboptimal, in the sense that they do not recover expected Bernstein-like behavior for $V_t = t I_d$. Can this issue be fixed?   
Second, can we remove the dependence on $H_{c,\tau}$ in Theorem~\ref{thm:stitching-simplified}? Is the function $g_\psi$ in Theorem~\ref{thm:sub-psi} necessary, or is it simply an artifact of the proof technique? While $g_\psi$ is easy to compute, its presence makes it tougher to reason about these bounds analytically. 

\ifarxiv
Second, the fact that our bounds are dimension-free suggests that they can be applied to infinite-dimensional spaces. It is typically possible to extend dimension-free results from finite dimensional spaces to infinite dimensional spaces (e.g., \citep{mollenhauer2023concentration}). While we expect this is also possible for our bounds, some of the details pose challenges. For instance, Theorem~\ref{thm:sub-psi} relies on using uniform distributions, which are not well-defined in general Hilbert spaces as there is no Lebesgue measure in infinite dimensions.  

More general spaces beyond Hilbert spaces are also of interest. Several previous works have studied mean concentration in smooth Banach spaces, for instance~\citep{pinelis1994optimum,martinez2024empirical,whitehouse2024mean}. Can self-normalized results be given in such a general setting? 
Can the variational technique be applied in such a setting?
\fi 

\subsection*{Acknowledgments}
BC was supported in part by NSERC-PGSD, grant no.
567944. Both authors acknowledge support from NSF grants IIS-2229881 and DMS-2310718.

{\small 
\bibliographystyle{plainnat}
\bibliography{main}
}

\appendix

\section{Method of Mixtures for sub-Gaussian Processes}
\label{app:mom-for-subg}

In this section we show that the method of mixtures---the proof method used by \citet{abbasi2011improved} and \citet{pena2008self}---can also be used to prove Theorem~\ref{thm:sub-gaussian}. To repeat what was said in that section, existing proofs assume that $V_t$ is predictable, not adapted. Here we show that the same proof technique can accommodate an adapted variance process. 

Let $(S_t,V_t)$ be a sub-$\psi_N$ process, implying that for all $\theta\in\Re^d$,
\[M_t(\theta) = \exp\left\{ \la \theta, S_t\ra - \psi_N(1)\la \theta, V_t\theta\ra\right\} \leq N_t(\theta),\]
where $(N_t(\theta))$ is a nonnegative supermartingale. (To see why we can consider all $\theta\in\Re^d$ instead of $\theta\in\dsphere$, see Section~\ref{proof:sub-gaussian} below.) Let $\nu$ be a Gaussian with mean 0 and covariance $U_0^{-1}$ and consider the process with increments
\begin{equation}
    M_t = \int_{\Re^d} \exp\{  \la \theta, S_t\ra - \psi_N(1)\la \theta, V_t\theta\ra \}\nu(\d\theta).  
\end{equation}
To compute $M_t$, notice that we can write 
\begin{equation*}
    \la\theta, S_t \ra - \psi_N(1) \la \theta, V_t\theta\ra = \frac{1}{2}\|S_t\|_{V_t^{-1}} - \frac{1}{2}\|\theta - V_t^{-1}S_t\|_{V_t}^2, 
\end{equation*}
and 
\begin{equation*}
  \| \theta - V_t^{-1}S_t\|_{V_t}^2 +\la \theta, U_0\theta\ra  = \|\theta - (U_0 + V_t)^{-1}S_t\|_{U_0 + V_t}^2 -2\|S_t\|_{V_t^{-1}}^{2} +2\|S_t\|_{(U_0 + V_t)^{-1}}^2. 
\end{equation*}
Hence, writing out the density of $\nu$,  
\begin{align*}
    M_t &= \frac{\exp(\frac{1}{2}\|S_t\|_{V_t^{-1}}) }{\sqrt{2\pi \det(U_0^{-1})}} \int_{\Re^d} \exp\left(- \frac{1}{2}\| \theta - V_t^{-1}S_t\|_{V_t}^2 - \frac{1}{2}\la \theta, U_0\theta\ra\right) \d\theta  \\ 
    &= \frac{\exp(\frac{1}{2}\|S_t\|_{(U_0 + V_t)^{-1}}^2)}{\sqrt{2\pi \det(U_0^{-1})}} \int_{\Re^d} \exp\left(- \frac{1}{2}\|\theta - (U_0 + V_t)^{-1}S_t\|_{U_0 + V_t}^2  \right) \d\theta\\
    &= \frac{\exp(\frac{1}{2}\|S_t\|_{(U_0 + V_t)^{-1}}^2)}{\sqrt{2\pi \det(U_0^{-1})}} 
    \sqrt{2\pi \det((U_0 + V_t)^{-1}} \\ 
    &= \sqrt{\frac{\det(U_0)}{\det(U_0 + V_t)}} \exp\left(\frac{1}{2}\|S_t\|_{(U_0 + V_t)^{-1}}^2\right). 
\end{align*}
Since $\int N_t(\theta) \d\nu(\theta)$ is a supermartingale by Fubini's theorem, $M_t$ remains upper bounded by a nonnegative supermartingale. We may thus apply Ville's inequality to obtain $P(M_\tau \geq 1/\delta) \leq \E[M_1]\delta\leq \delta$. In other words, with probability $1-\delta$, $\log M_\tau \leq \log(1/\delta)$, which translates to 
\[
\frac{1}{2}\|S_\tau\|^2_{(U_0 + V_\tau)^{-1}} \leq \frac{1}{2}\log\left(\frac{\det (U_0 + V_\tau)}{\det U_0}\right) + \log(1/\delta),
\]
which is precisely Theorem~\ref{thm:sub-gaussian}.

\section{Omitted Proofs}
\label{app:proofs}

\subsection{Proof of Theorem~\ref{thm:sub-gaussian}}
\label{proof:sub-gaussian}
Let $(S_t)$ be a sub-Gaussian process with variance proxy $V_t$. For each $\lambda\geq 0$ and $\theta\in\Re^d$, let 
\begin{equation}
\label{eq:Zt-subG}
    Z_t^\lambda(\theta) = \exp\big\{ \lambda \la \theta, S_t\ra - \psi_N(\lambda) \la \theta, V_t\theta\ra\big\}.
\end{equation}
For $\theta\in\dsphere$, $(Z_t(\theta))$ is upper-bounded by a nonnegative supermartingale by Definition~\ref{def:sub-psi}. We claim that this property extends to all $\theta\in\Re^d$. Indeed, given any such $\theta$, let $\phi = \theta /\|\theta\|_2$. Note that $\phi\in\dsphere$. Observe that 
\begin{align*}
    Z_t^\lambda(\theta) = \exp\{ \lambda \|\theta\| \la \phi, S_t \ra - \psi_N(\lambda \|\theta\|) \la \phi, V_t\phi\ra\} = Z_t^{\lambda \|\theta\|}(\phi),
\end{align*}
which is upper bounded by $L_t^{\lambda \|\theta\|}(\phi)$ (note that $\lambda_{\max} = \infty$ in this case). Thus, the process defined by~\eqref{eq:Zt-subG} is upper bounded by a nonnegative supermartingale for all $\theta\in\Re^d$, enabling us to apply Proposition~\ref{prop:variational_template} with $\Theta=\Re^d$.

Let $\tau$ be a stopping time and consider a Gaussian distribution $\rho = \rho_\tau$ with mean $\mu_\rho$ and $\Sigma_\rho$. We have 
\begin{align}
\int\log Z_\tau(\theta) \d\rho &=  \lambda \la \mu_\rho, S_\tau\ra - \psi_N(\lambda) \int \la \theta, V_\tau\theta \ra \d\rho  \notag \\ 
&= \lambda \la \mu_\rho, S_\tau\ra - \psi_N(\lambda)\left( \la \mu_\rho, V_\tau\mu_\rho\ra  +  \Tr(V_\tau \Sigma_\rho)\right).
\end{align}
Define $\hatV_t = V_t + U_0$ and consider setting $\mu_\rho = \hatV_\tau^{-1}S_\tau$. Then 
\begin{align*}
    \int\log Z^\lambda_\tau(\theta) \rho(\d\theta) &= \lambda\|S_\tau\|_{\hatV_\tau^{-1}}^2 - \psi_N(\lambda) \|S_\tau\|_{\hatV_\tau^{-1}V_\tau\hatV_\tau^{-1}}^2 - \psi_N(\lambda) \Tr(V_\tau \Sigma_\rho) \\ 
    &= (\lambda  - \psi_N(\lambda))\|S_\tau\|_{\hatV_\tau^{-1}}^2 +  \psi_N(\lambda) \|S_\tau\|^2_{\hatV_\tau^{-1} U_0 \hatV_\tau^{-1}} - \psi_N(\lambda) \Tr(V_\tau\Sigma_\rho).
\end{align*}
Let $\nu$ be a Gaussian with mean 0 and covariance $\Sigma_\nu$. Then 
\begin{align*}
    \kl(\rho\|\nu) &= \frac{1}{2}\left\{\Tr(\Sigma_\nu^{-1}\Sigma_\rho) + \la \mu_\rho, \Sigma_\nu^{-1}\mu_\rho\ra -d + \log\left(\frac{\det \Sigma_\nu}{\det \Sigma_\rho}\right)\right\} \\ 
    &= \frac{1}{2}\left\{\Tr(\Sigma_\nu^{-1}\Sigma_\rho) +  \|S_\tau\|^2_{\hatV_\tau^{-1} \Sigma_\nu^{-1} \hatV_\tau^{-1}} -d + \log\left(\frac{\det \Sigma_\nu}{\det \Sigma_\rho}\right)\right\}.
\end{align*}
In order to match the term $ \psi_N(\lambda) \|S_\tau\|^2_{\hatV_\tau^{-1} U_0 \hatV_\tau^{-1}}$ above, 
we choose $\Sigma_\nu = (2\psi_N(\lambda) U_0)^{-1}$. 
Proposition~\ref{prop:variational_template} implies that with probability $1-\delta$, 
\[\int \log Z_\tau(\theta) \d\rho \leq \kl(\rho\|\nu) + \log(1/\delta),\]
which in our case rearranges to 
\begin{align}
\label{eq:subG_bound1}
   (\lambda  - \psi_N(\lambda))\|S_\tau\|_{\hatV_\tau^{-1}}^2 \leq \psi_N(\lambda)\Tr(\hatV_\tau \Sigma_\rho)  - \frac{d}{2} + \frac{1}{2}\log\left(\frac{\det \Sigma_\nu}{\det \Sigma_\rho}\right) + \log(1/\delta).
\end{align}
Taking $\Sigma_\rho = (2\psi_N(\lambda) \hatV_\tau)^{-1}$, gives $\psi_N(\lambda) \Tr(\hatV_\tau\Sigma_\rho) = d/2$. Also, $\det \Sigma_\nu / \det \Sigma_\rho = \det \hatV_\tau / \det U_0$. Assuming $\lambda - \psi_N(\lambda)>0$, \eqref{eq:subG_bound1} thus becomes
\begin{equation}
    \|S_\tau\|_{(V_\tau + U_0)^{-1}}^2 \leq \frac{1}{\lambda - \psi_N(\lambda)}\left(\frac{1}{2}\log\left(\frac{\det (V_\tau + U_0) }{\det  U_0 }\right) + \log(1/\delta)\right).     
\end{equation}
Finally, we optimize over $\lambda$ to achieve the maximum value of $\lambda^* - \psi_N(\lambda^*) = 1/2$ at $\lambda^*=1$. This completes the proof.

\subsection{Proof of Theorem~\ref{thm:sub-psi}} 
\label{proof:sub-psi}
First note that if $(S_t,V_t)$ is a sub-$\psi$ process, then so too is $(S_t,V_t + U_0)$ for any PSD $U_0$. Let $\hatV_t = V_t + U_0$. For the remainder of the proof we will consider the process $(S_t,\hatV_t)$. 
Set
\begin{equation}
    Z_t(\theta) = \exp\left\{ \lambda \la \theta, S_t\ra - \psi(\lambda) \la \theta, \hatV_t\theta\ra\right\}.
\end{equation} 
For any $\theta\in\dsphere$, the process $(Z_t(\theta))$ is upper bounded by some nonnegative supermartingale by assumption. 
We claim that the same holds for all $\theta\in\dball$. This is shown in \citet[Lemma 2.10]{chugg2025time}, but let us prove it for completeness. 
For any $\theta\in\dball$, let $\phi = \theta / \|\theta\|$ and notice that 
\begin{align*}
    &\exp\{\lambda\la \theta, S_t\ra - \psi(\lambda)\la \theta, \hatV_t\theta\ra\} \\ 
    &= \exp\{\lambda\|\theta\|\la \phi, S_t\ra - \psi(\lambda)\|\theta\|^2\la \phi, \hatV_t\phi\ra\} \\ 
    &\leq \exp\{\lambda \|\theta\|\la \phi, S_t\ra - \psi(\|\theta\|\lambda) \la \phi,\hatV_t\phi\ra\},
\end{align*}
where the inequality follows from the fact that $\psi$ is super-Gaussian. Since $\lambda\|\theta\|\leq \lambda$, $\psi(\|\theta\|\lambda)$ is well-defined and  the final quantity defines a process which is upper bounded by a nonnegative supermartingale. 
We may henceforth assume that $(Z_t(\theta))$ is upper-bounded by a nonnegative supermartingale for all $\theta\in\dball$, enabling us to take $\Theta = \dball$ in Proposition~\ref{prop:variational_template}. 

Let $\tau$ be a stopping time and let $\rho = \rho_\tau$ be a distribution over $\dball$ with mean  $\mu_\rho$ and covariance $\Sigma_\rho$. 
Then, 
\begin{align}
\int\log Z_\tau(\theta) \d\rho &=  \lambda \la \mu_\rho, S_\tau\ra - \psi(\lambda) \int \la \theta, \hatV_\tau\theta \ra \d\rho  \notag \\ 
&= \lambda \la \mu_\rho, S_\tau\ra - \psi(\lambda) \left(\la \mu_\rho, \hatV_\tau\mu_\rho\ra  +  \Tr(\hatV_\tau \Sigma_\rho)\right). \label{eq:int_log_Zt1}
\end{align}
For some $\beta>0$ to be determined later, 
we take $\rho$ to be the uniform distribution over the ellipsoid with mean $\mu_\rho  = \beta \hatV_\tau^{-1} S_\tau$ and shape $A_{\rho} = \psi^{-1}(\lambda)\hatV_\tau^{-1}$. That is, $\rho$ is uniform over $\calE_\rho := \{ \theta\in\Re^d: (\theta - \mu_{\rho})^t A_{\rho}^{-1} ( \theta - \mu_{\rho})\leq 1\}$ which we assume for now is a subset of $\dball$.  Note that similarly to the proof of Theorem~\ref{thm:sub-gaussian}, both $\mu_\rho$ and $A_\rho$ are functions of $\tau$ but we suppress this dependence for convenience. 
The distribution $\rho$ has covariance $\Sigma_\rho = (d+2)^{-1} A_{\rho}$ and   
\[ 
\psi(\lambda) \Tr(\hatV_\tau\Sigma_\rho) = \frac{d}{d+2}\leq 1.
\]
With these choices, \eqref{eq:int_log_Zt1}  becomes 
\begin{align}
    \int\log Z_\tau(\theta) \d\rho &=  (\lambda\beta - \beta^2 \psi(\lambda)) \la \hatV_\tau^{-1}S_\tau, S_\tau\ra  - \frac{d}{d+2}. 
\end{align}
Proposition~\ref{prop:variational_template} then gives  that with probability $1-\delta$, 
\begin{align}
\label{eq:bound1}
    (\lambda\beta - \beta^2 \psi(\lambda)) \|S_\tau\|_{\hatV_\tau^{-1}}^2 \leq \kl(\rho\|\nu) + 1  + \log(1/\delta),
\end{align}
for a data-independent prior $\nu$. 
If $\nu$ is uniform over the ellipsoid $\calE_\nu = \{\theta: \theta^t A_\nu^{-1}\theta\leq 1\}$ (which, again, we require to be a subset of $\dball$) and $\beta$ is small enough such that $\calE_\rho\subset\calE_\nu$, then 
\begin{align*}
    \kl(\rho\|\nu) &= \int \log\left(\frac{\d\rho}{\d\nu}(\theta)\right) \d\rho = \int \log\left(\frac{\text{vol}(\calE_\nu)}{\text{vol}(\calE_\rho)}\right)\d\rho =  \log\left(\frac{\sqrt{\det A_\nu}}{\sqrt{\det A_\rho}}\right).
\end{align*}
If we take $A_\nu = \psi^{-1}(\lambda) U_0^{-1}$ 
then 
\begin{equation}
    \kl(\rho\|\nu) = \frac{1}{2}\log\left(\frac{\det \hatV_\tau}{\det U_0}\right).
\end{equation}
To ensure that $\calE_\nu\subset\dball$ it suffices that the minimum eigenvalue of $A_\nu^{-1}$ is at least 1, which holds if $\psi(\lambda)\gamma_{\min}(U_0)\geq 1$. That is, $\lambda \geq \psi^{-1}(1/\gamma_{\min}(U_0))$. This gives the lower bound on $\lambda$ in the statement of the theorem. 

It remains to pick $\beta$, which must be small enough such that $\calE_\rho\subset\calE_\nu$. That is, if $(\theta - \beta \hatV_\tau^{-1}S_t)^t A_\rho^{-1}(\theta - \beta \hatV_\tau^{-1} S_t) \leq 1$ we want to ensure that $\theta^t A_\nu^{-1} \theta\leq 1$. 
Equivalently, we may shift $\theta$ by $\beta V_\tau^{-1}S_\tau$ and show that if 
$\theta \hatV_\tau \theta\leq \psi^{-1}(\lambda)$ then $(\theta + \beta \hatV_\tau^{-1}S_\tau)^t U_0 (\theta + \beta \hatV_\tau^{-1} S_\tau)\leq \psi^{-1}(\lambda)$, where we've recalled the definition of $A_\rho$ and $A_\nu$. 
Put $y = y_\tau = \hatV_\tau^{-1}S_\tau$ and  suppose that $\theta^t \hatV_\tau \theta \leq \psi^{-1}(\lambda)$. For any $\eps>0$, 
\begin{align*}
    (\theta + \beta y)^t U_0 (\theta + \beta y) &= \theta^t U_0\theta + 2\beta\theta^t U_0 y + \beta^2 y^t U_0 y \\ 
    &\leq (1 + \eps) \theta^t U_0\theta  + \beta^2 (1 + \eps^{-1}) y^t U_0 y && \text{Young's inequality}\\ 
    &\leq (1 + \eps) \alpha_\tau \theta^t \hatV_\tau \theta + \beta^2(1 + \eps^{-1})y^t U_0 y && \text{Rayleigh coef.} \\ 
    &\leq (1 + \eps) \alpha_\tau \psi^{-1}(\lambda)  + \beta^2(1 + \eps^{-1})y^t U_0 y && \theta^t \hatV_\tau\theta \leq \psi^{-1}(\lambda)\\ 
    &=: h(\eps,\beta).
\end{align*}
Minimizing $h(\eps,\beta)$  over $\eps>0$ gives 
\[
\eps^* = \beta \sqrt{\frac{y^t U_0 y}{\alpha_\tau \psi^{-1}(\lambda)}},
\]
so that $h(\eps^*,\beta) = \beta^2 \|y\|_{U_0}^2 + 2\beta \|y\|_{U_0}\sqrt{\alpha_\tau \psi^{-1}(\lambda)} + \alpha_\tau\psi^{-1}(\lambda)$. 
Setting $h(\eps^*, \beta) =\psi^{-1}(\lambda)$ and solving for $\beta$ gives (after taking the positive root of the quadratic equation)
\begin{equation}
\label{eq:beta_star}
    \beta^* := \frac{\sqrt{\psi^{-1}(\lambda)} - \sqrt{\alpha_\tau \psi^{-1}(\lambda)}}{\|y\|_{U_0}} = \frac{g_\tau(\lambda)}{\|y\|_{U_0}}.
\end{equation}
Notice that 
\begin{align*}
  \|y\|^2_{U_0} &= S_\tau^t  \hatV_\tau^{-1} U_0 \hatV_\tau^{-1} S_\tau 
  = (S_\tau^t\hatV_\tau^{-1/2})(\hatV_\tau^{-1/2}U_0\hatV_\tau^{-1/2})(\hatV_\tau^{-1/2}S_\tau) \\
  &\leq \|\hatV_\tau^{-1/2}U_0\hatV_\tau^{-1/2}\|S_\tau^t \hatV_\tau^{-1}S_\tau \leq \|S_\tau\|_{\hatV_\tau^{-1}}^2,  
\end{align*}
where the final inequality follows since $U_0 \preceq \hatV_\tau$ so $\hatV_\tau^{-1/2}U_0 \hatV_\tau^{-1/2}\preceq I$ and \[\|\hatV_\tau^{-1/2}U_0 \hatV_\tau^{-1/2}\| \leq 1.\]
Therefore, setting 
\[
\beta^{**} := \frac{g_\tau(\lambda)}{\|S_\tau\|_{\hatV_\tau^{-1}}}\leq \beta^*,\]
we obtain $(\theta + \beta^{**} y)^t U_0 (\theta + \beta^{**} y) \leq h(\eps^*,\beta^{**}) \leq h(\eps^*, \beta^{*}) = \psi^{-1}(\lambda)$ (where we've used that $h$ is monotonically increasing in $\beta$), so \eqref{eq:bound1} holds with $\beta = \beta^{**}$. Noting that 
\begin{align*}
    (\lambda \beta^{**} - (\beta^{**})^2 \psi(\lambda)) \|S_\tau\|_{\hatV_\tau^{-1}}^2 = \lambda g_\tau(\lambda)\|S_\tau\|_{\hatV_\tau^{-1}} - g_\tau^2(\lambda) \psi(\lambda),
\end{align*}
and rearranging~\eqref{eq:bound1} completes the proof.  

\subsection{Proof of Theorem~\ref{thm:stitching-simplified}}
\label{proof:sub-gamma}

Theorem~\ref{thm:stitching-simplified} follows from the following more general result after taking $\eta = 2$ and $\ell(k) = (k+1)^2\pi^2/6$. In this case $\sup_{k\geq 0}\ell(k+2)/\ell(k+1)\leq 4$ so $\alpha_{\delta,\eta}\leq 4.54$, and 
\[\log(\ell(\log_\eta(\det V_\tau)+1)) = \log\left(\frac{\pi^2}{6}\right) + 2\log(\log_2(\det V_\tau) + 1),\]
which gives rise to the constants in Theorem~\ref{thm:stitching-simplified}. 

\begin{theorem}
\label{thm:sub-gamma-stitching}
    Let $(S_t,V_t)$ be a sub-$\psi_{G,c}$ process for any $c>0$ and suppose that $V_1 + U_0 \succeq I_d$. Let $U_0$ be positive definite and let $\ell:\mathbb{N}_0 \to \Re_{>0}$ satisfy $\sum_{k\geq 0} \ell^{-1}(k) = 1$. Fix $\delta\in(0,1/e]$ and take any $1<\eta<(e/\delta)^2$. 
    Set $\upsilon = \gamma_{\min}(U_0)$, 
    \begin{equation}
        \alpha_{\delta,\eta} := 1 + \frac{\log(\eta)}{2 +2\log(1/\delta) - \log(\eta)} + \sup_{k\geq 0}\frac{\ell(k+2)}{\ell(k+1)}, 
    \end{equation}   
    and $\xi(x) = 2/(c + \sqrt{c^2 + 2x})$. 
    Then, with probability $1-\delta$, for all stopping times $\tau$, 
    \begin{equation}
        \|S_\tau\|_{(V_\tau+U_0)^{-1}} \leq \frac{cD_{\tau,\eta}(U_0, V_\tau) + \sqrt{\frac{\alpha_{\delta,\eta}}{2}D_{\tau,\eta}(U_0,V_\tau)} + \xi(c)\max\left\{\frac{1}{\upsilon\xi(\upsilon)}, \sqrt{\frac{D_{\tau,\eta}(U_0,V_\tau)}{2}}\right\}}{H_c(U_0,V_\tau)}, 
    \end{equation}
    where 
    \begin{equation}
    \label{eq:Dtau}
        D_{\tau,\eta}(U_0, V_\tau) = \frac{1}{2}\log\left(\frac{\det (V_\tau+U_0)}{\det U_0}\right) + 1 + \log(\ell(\log_\eta(\det (V_\tau+U_0)))) + \log(1/\delta),  
    \end{equation}
    and 
    \begin{equation}
        H_c(U_0, V_t) = 0\vee \left(\sqrt{ \frac{\sqrt{c^2 + 2c} - c}{\sqrt{\alpha_{\delta,\eta}} + c}} - \sqrt{\frac{\gamma_{\max}(U_0)}{\gamma_{\min}(\hatV_t)} \xi(c)} \right)
    \end{equation}
\end{theorem}

\begin{proof}
Let $\hatV_t = V_t + U_0$. Consider $\psi_{G,c}(\lambda)$ for $c>0$. 
We consider breaking intrinsic time into geometric epochs, $E_k = \{t:  \eta^k \leq \det \hatV_t < \eta^{k+1}\}$ for all $k\geq 0$ and some $\eta>1$. Recall we are assuming that $U_0\succeq I_d$, implying that $\cup_{k\geq 0} E_k$ is a bonafide partition of the sample space. Let $k(t)$ denote the unique $k\in\mathbb{N}$ such that $\eta^{k} \leq \det \hatV_t \leq \eta^{k+1}$ and set 
\begin{equation}
    D_t = \frac{1}{2}\log\left(\frac{\det \hatV_t}{\det U_0}\right) + 1 + \log(1/\delta_{k(t)}). 
\end{equation}
Note that $k(t)$ is a random variable. 
By definition, $k(t) \leq \log_\eta \det \hatV_t$, so  
\begin{align*}
    \log(1/\delta_{k(t)}) = \log(\ell(k(t))/\delta) \leq \log(\ell(\log_\eta(\det V_t))) + \log(1/\delta), 
\end{align*}
which is where the iterated logarithm term will come from in the final bound. 
In epoch $E_k$ our strategy is to apply Theorem~\ref{thm:sub-psi} with $\delta = \delta_k$ and $\lambda =\lambda_k$, for some $\delta_k,\lambda_k$ to be chosen later. For $g = g_{\psi,t}$, this gives that with probability $1-\delta_k$, for all $t\in E_k$, 
\begin{align*}
    \|S_t\|_{\hatV_t^{-1}} &\leq \frac{D_t}{g(\lambda_k)\lambda_k} + \frac{g(\lambda_k)\psi_{G,c}(\lambda_k)}{\lambda_k} \\
    &= \frac{1}{g(\lambda_k)}\left(\frac{D_t}{\lambda_k} + \frac{g^2(\lambda_k)\psi_{G,c}(\lambda_k)}{\lambda_k}\right) \\ 
    &\leq \frac{1}{g(\lambda_k)}\left(\frac{D_t}{\lambda_k} + \frac{\xi(c)\psi_{G,c}(\lambda_k)}{\lambda_k}\right) =: W_c(t,\lambda_k), 
\end{align*}
where we've used that $g(\lambda_k) \leq \sqrt{\psi_{G,c}^{-1}(\lambda_k)} \leq \sqrt{\xi(c)}$ since $\psi_{G,c}^{-1}(u)$ is increasing in $u$ and $\lambda_k<\lambda_{\max}$. 
Choose $\delta_k = \delta / \ell(k)$. Then, 
\begin{align*}
    \Pr(\exists t\geq 1: \|S_t\|_{V_t^{-1}} \geq W_c(t,\lambda_{k(t)}) ) 
    &= \Pr\bigg(\bigcup_{k\geq 0} \big\{\exists t\in E_k: \|S_t\|_{\hatV_t^{-1}} \geq W_c(t,\lambda_k)\big\}\bigg)  \\ 
    &\leq \sum_{k\geq 0} \Pr\big(\exists t\in E_k: \|S_t\|_{\hatV_t^{-1}} \geq W_c(t,\lambda_k)\big) \\ 
    &\leq \sum_{k\geq 0} \frac{\delta}{\ell(k)} = \delta. 
\end{align*}
It remains to choose $\lambda_k$ and analyze the width of $W_c(t, \lambda_{k(t)})$. Set
\begin{equation*}
    B_k = \frac{1}{2}\log\left(\frac{\eta^k}{\det U_0}\right) + 1 + \log(1/\delta_{k}),  
\end{equation*}
and consider setting 
\begin{equation}
    \lambda_k = \max\left\{ \psi_{G,c}^{-1}(1/\upsilon), \frac{\sqrt{2B_k}}{1 + c\sqrt{2B_k}}\right\}, \text{~ where ~} \upsilon = \gamma_{\min}(U_0).
\end{equation}
Observe that $x/(1 + cx)<1/c$ for all $x\geq 0$, so $\lambda_k<\lambda_{\max}$. Therefore, $\lambda_k \in [\psi_{G,c}^{-1}(1/\upsilon), \lambda_{\max})$ and is a legal choice in Theorem~\ref{thm:sub-psi}. 

Next we want to bound $B_{k(t)}$ and relate it to $D_t$.  Note that $B_k$ is increasing in $k$, and $\lambda_k$ is increasing in $B_k$ hence also in $k$. For $t\in E_k$ we have $B_k \leq D_t \leq B_{k+1}$ by construction. To relate $B_{k+1}$ and $B_k$ we use the following lemma. 

\begin{lemma}
\label{lem:Bkbound}
Define  
\begin{equation}
    \alpha_{\delta,\eta} := 1 + \frac{\log(\eta)}{2 + 2\log(1/\delta) - \log(\eta)} + \sup_k \log(\ell(k+1)/\ell(k)). 
\end{equation}
If $\eta < (e/\delta)^2$, then $B_{k+1}\leq \alpha_{\delta,\eta} B_k$. 
\end{lemma}
\begin{proof}
Write 
\begin{align}
    B_{k+1} & = \frac{1}{2}\left( \log\left(\frac{ \eta^{k+1}}{\det U_0}\right)\right) + 1 + \log(\ell(k+1)/\delta) \notag \\
    &= \frac{1}{2}\left(\log(\eta) + \log\left(\frac{ \eta^{k}}{\det U_0}\right)\right) + 1 + \log\left(\frac{\ell(k+1)\ell(k)}{\ell(k)\delta)}\right) \notag \\
    &= B_k + \frac{1}{2}\log\eta + \log(\ell(k+1)/\ell(k)) \notag \\
    &\leq B_k + \frac{1}{2}\log(\eta) + s, \label{eq:pf-sub-gamma-3}
\end{align}
where $s = \sup_k \log(\ell(k+1)/\ell(k))$. 
We claim that $\log(\eta) \leq u B_k$ for some constant $u$ and all $k$. Note, however, that we need only consider those $k$ such that $E_k$ actually occurs, i.e., the minimum $k$ we need to consider satisfies $\eta^k \leq \det \hatV_1 < \eta^{k+1}$. That is, we may assume that 
\[\frac{\eta^k}{\det U_0} > \frac{\det \hatV_1}{\eta \det U_0}.\] 
Therefore, 
\begin{align*}
    uB_k &= \frac{u}{2}\log\left(\frac{\eta^k}{\det U_0}\right) + u + u\log(1/\delta_k) \\ 
    &\geq \frac{u}{2}\log\left(\frac{\det \hatV_1}{\eta \det U_0}\right) + u + u\log(1/\delta) \\
    &\geq \frac{u}{2}\log\left(\frac{1}{\eta }\right) + u + u\log(1/\delta),
\end{align*}
where we've used that $\hatV_1\succeq U_0$. Therefore, to have $\log(\eta) \leq uB_k$ it suffices that $\log(\eta) \leq u\log(1/\eta)/2 + u + u\log(1/\delta)$, i.e., 
\begin{equation}
\label{eq:pf-sub-gamma-2}
    u\geq \frac{\log(\eta)}{1 + \log(1/\delta) - \log(\eta)/2}, 
\end{equation}
where we are assured that the denominator on the right hand side is greater than 0 since $\eta < (e/\delta)^2$. Set $u^\circ$ to be the right hand side of~\eqref{eq:pf-sub-gamma-2}. Then, from~\eqref{eq:pf-sub-gamma-3} we have 
\begin{align*}
    B_{k+1}\leq B_k + \frac{u^\circ}{2}B_k + s= \alpha_{\delta,\eta}B_k. 
\end{align*}
In summary, for $\eta < (e/\delta)^2$, we have that $B_k \leq D_t \leq \alpha_{\delta,\eta}B_k$ for all $t\in E_k$, which completes the proof. 
\end{proof}

Since $\lambda_k$ is increasing in $B_k$, Lemma~\ref{lem:Bkbound} implies that 
\begin{equation}
\label{eq:Dt<Bk<Dt}
    \frac{\sqrt{2D_t/\alpha_{\delta,\eta}}}{1 + c\sqrt{2D_t/\alpha_{\delta,\eta}}} \leq \frac{\sqrt{2B_{k(t)}}}{ 1 + c\sqrt{2 B_{k(t)}}} \leq \frac{\sqrt{2D_t}}{1 + c\sqrt{2D_t}}.
\end{equation}
Assume for the moment that $\lambda_{k(t)} = \frac{\sqrt{2B_{k(t)}}}{ 1 + c\sqrt{2 B_{k(t)}}}$. 
Note that $\lambda \mapsto \psi_{G,c}(\lambda)/\lambda = \lambda / [2(1 - c\lambda)]$ is also an increasing function of $\lambda$. 
Using the lower bound in~\eqref{eq:Dt<Bk<Dt} in the first term and the upper bound in the second, we have 
\begin{align*}
    \frac{D_t}{\lambda_{k(t)}} + \frac{\xi(c)\psi_{G,c}(\lambda_{k(t)})}{\lambda_{k(t)}} &= \frac{D_t}{\lambda_{k(t)}} + \frac{\xi(c)\lambda_{k(t)}}{2(1 - c\lambda_{k(t)})} \\ 
    &\leq \frac{(1 + c\sqrt{\frac{2D_t}{\alpha_{\delta,\eta}}})\sqrt{D_t}}{\sqrt{2/\alpha_{\delta,\eta}}} + \frac{\xi(c)\sqrt{2D_t}}{2(1 + c\sqrt{2D_t})(1 - c\frac{\sqrt{2D_t}}{1 + c\sqrt{2D_t}})} \\ 
    &= \sqrt{\frac{\alpha_{\delta,\eta}D_t}{2}} + cD_t + \xi(c)\sqrt{\frac{D_t}{2}} \\ 
    &= c D_t + \left(\sqrt{\frac{\alpha_{\delta,\eta}}{2}} + \frac{\xi(c)}{\sqrt{2}}\right)\sqrt{D_t}.
\end{align*}
Meanwhile, if $\lambda_{k(t)} = \psi_{G,c}^{-1}(1/\upsilon)$, then we may still use the lower bound in~\eqref{eq:Dt<Bk<Dt} and we obtain 
\begin{align*}
    \frac{D_t}{\lambda_{k(t)}} + \frac{\xi(c)\psi_{G,c}(\lambda_{k(t)})}{\lambda_{k(t)}} &= \frac{D_t}{\lambda_{k(t)}} + \frac{\xi(c)/\upsilon}{\psi_{G,c}^{-1}(1/\upsilon)} \\ 
    &\leq \frac{(1 + c\sqrt{\frac{2D_t}{\alpha_{\delta,\eta}}})\sqrt{D_t}}{\sqrt{2/\alpha_{\delta,\eta}}} + \xi(c)\frac{c + \sqrt{c^2 + 2\upsilon}}{2\upsilon} \\ 
    &= \sqrt{\frac{\alpha_{\delta,\eta}D_t}{2}} + cD_t + \xi(c)\frac{c + \sqrt{c^2 + 2\upsilon}}{2\upsilon}, 
\end{align*}
where we've used that 
\[\psi_{G,c}^{-1}(u) = \frac{2}{c + \sqrt{c^2 + 2/u}}. \]
We can summarize both cases with the bound 
\begin{equation}
    \frac{D_t}{\lambda_{k(t)}} + \frac{\xi(c)\psi_{G,c}(\lambda_{k(t)})}{\lambda_{k(t)}} \leq \sqrt{\frac{\alpha_{\delta,\eta}D_t}{2}} + cD_t + \xi(c)\max\left\{\frac{c + \sqrt{c^2 + 2\upsilon}}{2\upsilon}, \sqrt{\frac{D_t}{2}}\right\}.
\end{equation}
To bound the width of $W_c(t,\lambda_{k(t)})$ it remains to analyze $g(\lambda_{k(t)})$. We claim that 
\begin{equation}
\label{eq:sub-gamma-pf-1}
    \psi^{-1}_{G,c}(\lambda_k) \nearrow \frac{2}{c + \sqrt{c^2 + 2c}} =\xi(c). 
\end{equation}
To see this, note that as $D_k\to\infty$, $\lambda_k \nearrow 1/c$. Moreover, 
\begin{equation*}
    \psi^{-1}_{G,c}(u) = \frac{2}{c + \sqrt{c^2 + 2/u}},
\end{equation*}
is strictly increasing in $u$. \eqref{eq:sub-gamma-pf-1} follows. Further, notice that 
\[\alpha_t = \sup_{\theta \in \Re^d} \frac{\la \theta, U_0\theta\ra}{\la \theta, \hatV_t\theta\ra} 
\leq \frac{\sup_{\theta\in\dsphere} \la \theta, U_0\theta\ra}{\inf_{\vartheta \in \dsphere} \la \vartheta, \hatV_t\vartheta\ra} = \frac{\gamma_{\max}(U_0)}{\gamma_{\min}(\hatV_t)},  
\]
and using the lower bound in~\eqref{eq:Dt<Bk<Dt}, 
\begin{align*}
    \psi^{-1}_{G,c}(\lambda_k) &= \lambda_k(\sqrt{c^2 + 2c + 2/\lambda_k} - c) \\ 
    &\geq \lambda_k(\sqrt{c^2 + 2c} - c) \\ 
    &\geq \frac{\sqrt{2D_t/\alpha_{\delta,\eta}}}{1 + c\sqrt{2D_t/\alpha_{\delta,\eta}}}(\sqrt{c^2 + 2c} - c) \\ 
    &= \frac{\sqrt{c^2 + 2c} - c}{1/\sqrt{2D_t/\alpha_{\delta,\eta}} + c} \\ 
    &\geq \frac{\sqrt{c^2 + 2c} - c}{\sqrt{\alpha_{\delta,\eta}/\log(1/\delta)} + c} \\ 
    &\geq \frac{\sqrt{c^2 + 2c} - c}{\sqrt{\alpha_{\delta,\eta}} + c}.
\end{align*}
where we've used that $\log(1/\delta)\geq 1$ since $\delta\leq 1/e$. 
Therefore, for $t\in E_k$,  
\begin{align*}
    g(\lambda_k) &= \sqrt{ \psi^{-1}_{G,c}(\lambda_k) } - \sqrt{\alpha_t \psi^{-1}_{G,c}(\lambda_k)} \\ 
    &\geq 0\vee \left(\sqrt{ \psi^{-1}(\lambda_k)} - \sqrt{\frac{\gamma_{\max}(U_0)}{\gamma_{\min}(\hatV_t)} \xi(c)} \right) \\ 
    &\geq 0\vee \left(\sqrt{ \frac{\sqrt{c^2 + 2c} - c}{\sqrt{\alpha_{\delta,\eta}} + c}} - \sqrt{\frac{\gamma_{\max}(U_0)}{\gamma_{\min}(\hatV_t)} \xi(c)} \right) \\ 
    &= H_c(U_0,V_t). 
\end{align*}
We have thus shown that 
\begin{align*}
    W_c(t,\lambda_{k(t)}) &= \frac{1}{\lambda_{k(t)}} \left(\frac{D_t}{\lambda_{k(t)}} + \frac{\xi(c)\psi_{G,c}(\lambda_{k(t)})}{\lambda_{k(t)}}\right) \\
    &\leq \frac{cD_t + \sqrt{\frac{\alpha_{\delta,\eta}}{2}D_t} + \xi(c)\max\left\{\frac{c + \sqrt{c^2 + 2\upsilon}}{2\upsilon}, \sqrt{D_t/2}\right\}}{H_c(U_0,V_t)}, 
\end{align*}
which completes the proof. 
\end{proof}

\subsection{Proof of Lemma~\ref{lem:f-examples}}
\label{proof:f-examples}

First consider $\psi = \psi_{G,c}$. From Lemma~\ref{lem:psi-properties} we know that $\psi'(\lambda^*(a)) = a$, i.e., 
\[\frac{\lambda^*(a)(2 - c\lambda^*(a))}{2(1 - c\lambda^*(a))^2}=a.\]
Solving for $\lambda^*(a)$ with the quadratic equation gives 
\begin{equation*}
    \lambda^*(a) = \frac{\sqrt{2ac + 1}-1}{c\sqrt{2ac + 1}}, 
\end{equation*}
so 
\[\psi(\lambda^*(a_n)) = \frac{(\sqrt{2ac + 1}-1)^2}{2c^2 \sqrt{2ac + 1}}.\]
As $a = a_n$ grows, we have $\psi(\lambda^*(a)) = O(\sqrt{a_n}/c^2)$. Meanwhile, 
\[\psi^{-1}(u) = \frac{2}{c + \sqrt{c^2 + 2/u}},\]
so 
$\psi^{-1}(\lambda^*(a_n)) \nearrow \psi^{-1}(1/c)$. Therefore, 
\[f(a_n) \asymp \frac{\sqrt{a_n}}{c^{3/2}}\left(\sqrt{\frac{2}{c + \sqrt{c^2 + 2c}}} -1\right).\]
Now, for $\psi = \psi_{E,c}$, setting $\psi'(\lambda^*(a)) = a$ gives 
\begin{equation*}
\lambda^*(a) = \frac{a}{1 + ca}. 
\end{equation*}
Hence 
\begin{equation*}
    \psi(\lambda^*(a)) = \frac{\log(1 + ca)}{c^2} - \frac{a}{c(1+ca)},  
\end{equation*}
and as $a = a_n$ grows with $n$, we have $\psi(\lambda^*(a_n)) \lesssim \log(ca_n))$. Meanwhile, the inverse of $\psi$ obeys 
\begin{equation}
    \psi^{-1}(u) = \frac{1 + W_0(-e^{-1-uc^2)}}{c}, 
\end{equation}
where $W_0$ is the principal branch of the W Lambert function, which is increasing, continuous, and satisfies $W_0(z)\exp(W_0(z)) = z$ and has range $z\in [-1/e, \infty)$. Hence 
\begin{equation*}
    \psi^{-1}(\lambda^*(a)) = \frac{1 + W_0(-\exp(-1 - \frac{c^2a}{1+a}))}{c} \nearrow \frac{ 1 + W_0(-e^{-1-c})}{c} \leq 1/c.  
\end{equation*}
Therefore, $f_\psi(\lambda^*(a)) \lesssim \log(ca_n) (\sqrt{1/c} - 1)$, which completes the argument.

\subsection{Proof of Proposition~\ref{thm:convex_conjugate_bound}}
\label{proof:convex_conjugate}

We begin with some properties of the convex conjugate. 

\begin{lemma}
\label{lem:psi-properties}
    Let $\psi:[0,\lambda_{\max})\to\Re_{\geq 0}$ be CGF-like and $\psi^*:[0,u_{\max})\to\Re_{\geq 0}$ denote the Legendre-Fenchel transform of $\psi$. For any $a\in\Re$, let 
    \[\lambda^*(a) \in  \argmax_{\lambda\in[0,\lambda_{\max})} \lambda a - \psi(\lambda).\]
    Then, 
    \begin{enumerate}
        \item[(1).] $\lambda^*(a)$ is well-defined and unique. Further, for all $a\in \im(\psi')$, $a= \psi'(\lambda^*(a))$ and $\lim_{a\to\infty} \lambda^*(a)= \lambda_{\max}$, 
        \item[(2).] For all $a\in\im(\psi')$, $\lambda^*(a) = (\psi')^{-1}(a)$, and
        \item[(3).] If $\psi'''(\lambda)\geq 0$, then $\lambda^*(r a) \leq r\lambda^*(a)$ for all $r\geq 1$. 
    \end{enumerate}
    Additionally, if $\psi(\lambda) / \lambda^2$ is increasing then, 
    \begin{enumerate}
        \item[(4).]  For all $\lambda\in[0,\lambda_{\max})$, 
    $\psi'(\lambda) \geq 2 \psi(\lambda)/\lambda$ and 
    \item[(5).] For all $u\in [0,u_{\max})$, $\psi^*(u) \geq u\lambda^*(u)/2$. 
    \end{enumerate}
    
\end{lemma}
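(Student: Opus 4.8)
The plan is to prove the five claims in the stated order, since the later ones build on the earlier ones, and the only substantive input beyond calculus is a convexity/concavity duality.

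\emph{Parts (1) and (2).} Fix $a\in\im(\psi')$. Since $\psi$ is strictly convex and $C^1$, the map $\lambda\mapsto \lambda a-\psi(\lambda)$ is strictly concave and $C^1$ on $(0,\lambda_{\max})$, so it has at most one stationary point, which is then its unique maximizer; the first-order condition reads $a=\psi'(\lambda^*(a))$, giving the identity in (1). Because $\psi$ is CGF-like, $\psi'$ is continuous, strictly increasing on $[0,\lambda_{\max})$ with $\psi'(0)=0$, hence a bijection onto $\im(\psi')$; the hypothesis $a\in\im(\psi')$ places the stationary point in $[0,\lambda_{\max})$, and inverting gives $\lambda^*(a)=(\psi')^{-1}(a)$, which is (2). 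For the limit in (1): $(\psi')^{-1}$ is increasing and bounded above by $\lambda_{\max}$, so $\lambda^*(a)$ converges to some $\ell\le\lambda_{\max}$ as $a$ increases through $\im(\psi')$; if $\ell<\lambda_{\max}$, then $a=\psi'(\lambda^*(a))\to\psi'(\ell)<\infty$, a contradiction, so $\ell=\lambda_{\max}$.

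\emph{Part (3).} The key observation is that $\psi'''\ge 0$ means precisely that $\psi'$ is convex; it is also increasing with $\psi'(0)=0$, and the inverse of an increasing convex function is concave. Indeed, if $g$ is increasing and convex with inverse $h$, then for $x_1,x_2$ in the range of $g$ and $t\in[0,1]$ convexity gives $g\big(th(x_1)+(1-t)h(x_2)\big)\le tx_1+(1-t)x_2$, and applying the increasing map $h$ yields $th(x_1)+(1-t)h(x_2)\le h\big(tx_1+(1-t)x_2\big)$. Thus $f:=(\psi')^{-1}$ is concave with $f(0)=0$. For $r\ge 1$, writing $a=\tfrac{1}{r}(ra)+(1-\tfrac{1}{r})\cdot 0$ and using concavity gives $f(a)\ge \tfrac{1}{r}f(ra)$, i.e.\ $f(ra)\le r f(a)$; combined with part (2) this is exactly $\lambda^*(ra)\le r\,\lambda^*(a)$.

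\emph{Parts (4) and (5).} For (4), put $q(\lambda)=\psi(\lambda)/\lambda^2$ on $(0,\lambda_{\max})$; then $q'(\lambda)=\big(\lambda\psi'(\lambda)-2\psi(\lambda)\big)/\lambda^3$, so $q$ nondecreasing forces $\lambda\psi'(\lambda)\ge 2\psi(\lambda)$, i.e.\ $\psi'(\lambda)\ge 2\psi(\lambda)/\lambda$; the endpoint $\lambda=0$ follows by continuity. For (5), fix $u\in[0,u_{\max})$. By the Legendre identity and part (1), $\psi^*(u)=u\lambda^*(u)-\psi(\lambda^*(u))$ with $u=\psi'(\lambda^*(u))$; substituting $\lambda=\lambda^*(u)$ into (4) gives $u=\psi'(\lambda^*(u))\ge 2\psi(\lambda^*(u))/\lambda^*(u)$, hence $\psi(\lambda^*(u))\le u\lambda^*(u)/2$, and therefore $\psi^*(u)=u\lambda^*(u)-\psi(\lambda^*(u))\ge u\lambda^*(u)/2$.

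\emph{Main obstacle.} The only genuinely nontrivial step is part (3): identifying $\psi'''\ge 0$ with convexity of $\psi'$, proving that the inverse of an increasing convex function is concave, and deducing $f(ra)\le r f(a)$ from concavity together with $f(0)=0$. Everything else is bookkeeping about domains---ensuring the relevant maximizers are attained in the open interval $(0,\lambda_{\max})$ (this is where $a\in\im(\psi')$ enters), that $\im(\psi')$ is an interval containing $0$ so that part (3) applies to all admissible $a$ and $r\ge 1$, and that the Legendre identity $\psi^*(u)=u\lambda^*(u)-\psi(\lambda^*(u))$ is valid throughout $[0,u_{\max})$.
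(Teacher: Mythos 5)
Your proposal is correct and follows essentially the same route as the paper: first-order optimality for (1)–(2), concavity of $(\psi')^{-1}$ (from $\psi'''\ge 0$) applied at the convex combination $a=\tfrac1r(ra)+(1-\tfrac1r)\cdot 0$ for (3), the derivative of $\psi(\lambda)/\lambda^2$ for (4), and the Legendre identity combined with (1) and (4) for (5). Your explicit proof that the inverse of an increasing convex function is concave, and your cleaner limiting argument for $\lambda^*(a)\to\lambda_{\max}$, are only minor refinements of steps the paper asserts more briefly.
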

\begin{proof}
    We begin with (1). For fixed $a\in\im(\psi')$, let $f(x) = ax - \psi(x)$. Since $\psi$ is convex, $f$ is concave, so $f$ is maximized at $x^*$ satisfying $a = \psi'(x^*)$ ($x^*$ exists and is unique  because $a\in\im(\psi')$ and $\psi'$ is strictly increasing).  But $x^* = \lambda^*(a)$ by definition, so $a = \psi'(\lambda^*(a))$. 
    Now, as $a\to\infty$, $\lambda = \lambda^*(a)$ is chosen to maximize $a - \psi'(\lambda)$, which grows as $\lambda\to\lambda_{\max}$ since $\psi'$ is strictly increasing (since $\psi$ is strictly convex). 
    (2) now also follows: since $\psi'$ is strictly increasing it is invertible, so may invert (1) to get (2). For (3), note that $\psi'''\geq 0$ iff $\psi''$ is increasing iff $\psi'$ is convex, hence its inverse $\lambda^*$ is concave (and nonnegative). Therefore, for all $x,y$, we have $\lambda^*(\alpha x + (1-\alpha)y) \geq \lambda^* \ell(x) + (1-\alpha)\lambda^*(y)$ for all $\alpha\in[0,1]$.  Consider $y=0$, so $\lambda^*(\alpha x) \geq \alpha \lambda^*(x) + (1-\alpha)\lambda^*(0) \geq \alpha \lambda^*(x)$. Taking $x = ra$ and $\alpha = 1/r \in [0,1]$, the result follows.  For (4), if $h(\lambda) = \psi(\lambda)/\lambda^2$ is increasing, then $h'(\lambda)\geq 0$, i.e., 
    \begin{align*}
        0\leq \frac{\lambda^2 \psi'(\lambda) - 2\lambda \psi(\lambda)}{\lambda^4},  
    \end{align*}
    thus implying that $\lambda \psi'(\lambda) - 2\psi(\lambda)\geq 0$. Finally, (5) involves combining the definition of $\lambda^*$ with (1) and (4) to notice that 
    \begin{align*}
        \psi^*(u) &= u\lambda^*(u) - \psi(\lambda^*(u)) \geq u\lambda^*(u) - \frac{\lambda^*(u)\psi'(\lambda^*(u))}{2} \\
        &= u\lambda^*(u) - \frac{\lambda^*(u) u}{2} = \frac{\lambda^*(u) u}{2},
    \end{align*}
    which completes the proof. 
\end{proof}

Now let us return to the bound of Theorem~\ref{thm:sub-psi}. Let $\hatV_t = V_t + U_0$ and let $\upsilon = \gamma_{\min}(U_0)$. For all $\lambda \in [\psi^{-1}(1/\upsilon), \lambda_{\max})$, with probability $1-\delta$, we can rearrange the guarantee to give 
\begin{equation}
\label{eq:pf-stitching-1}
    \lambda \|S_\tau\|_{\hatV_\tau^{-1}} - \psi(\lambda) \leq \frac{D_\tau(\delta)}{g_{\psi,\tau}(\lambda)} + g_{\psi,\tau}(\lambda) \psi(\lambda) - \psi(\lambda), 
\end{equation}
where as usual 
\begin{equation*}
    D_t(\delta) = \frac{1}{2}\log\left(\frac{\det \hatV_t}{\det U_0}\right) + 1 + \log(1/\delta). 
\end{equation*}
For $j\in\mathbb{N}$ and some $\eta>1$ to be determined, consider the event $E_j = \{ \eta^{j}\leq \|S_\tau\|_{\hatV_\tau^{-1}} < \eta^{j+1}\}$. Let $E_0 = \{ 0\leq \|S_\tau\|_{\hatV_\tau^{-1}} < \eta\}$. Note that $E_0,E_1,\dots$ partitions the sample space. Our strategy is to apply Theorem~\ref{thm:sub-psi} once on each event $E_j$ with a different $\delta = \delta_j$ and $\lambda = \lambda_j$, where the latter can be optimized as (roughly) a function of $\|S_\tau\|_{\hatV_\tau^{-1}}$ since we know how this quantity behaves on $E_j$. 
Rewriting~\eqref{eq:pf-stitching-1} with $\delta = \delta_j$ and $\lambda = \lambda_j$ gives 
\begin{equation}
    \lambda_j \|S_\tau\|_{\hatV_\tau^{-1}} - \psi(\lambda_j) \leq h_\tau(\lambda_j) D_\tau(\delta_j) + \psi(\lambda_j)(g(\lambda_j)-1),
\end{equation}
where $g(\lambda) = g_{\psi,\tau}(\lambda)$ and $h_\tau(\lambda) =1 / g(\lambda)$. Conditioning on $E_j$, we have 
\begin{align}
    \lambda_j \eta^{j+1} - \psi(\lambda_j) &= \lambda_j \|S_\tau\|_{\hatV_\tau^{-1}} - \psi(\lambda_j) + \lambda_j(\eta^{j+1} - \|S_\tau\|_{\hatV_\tau^{-1}}) \notag \\
    &\leq h_\tau(\lambda_j)D_\tau(\delta_j) + \psi(\lambda_j)(g(\lambda_j)-1) + \lambda_j(\eta^{j+1} - \eta^j). \label{eq:pf-stitching-2}
\end{align}
We want to choose an appropriate $\lambda_j$ on the event $E_j$. 
As in Lemma~\ref{lem:psi-properties}, let 
\begin{equation*}
    \lambda^*(a) = \argmax_{\lambda \in (0,\lambda_{\max})} \lambda a - \psi(\lambda),
\end{equation*}
and take 
\begin{equation}
\label{eq:lambdaj}
    \lambda_j = \max\left\{\lambda^*(\eta^{j+1}), \psi^{-1}(1/\upsilon)\right\}. 
\end{equation}
Let us suppose for now that $\lambda_j = \lambda^*(\eta^{j+1})$ so that $\lambda_j \eta^{j+1} - \psi(\lambda_j) = \psi^*(\eta^{j+1})$ (by definition of $\psi^*$). 
In this case we have 
\[\psi(\lambda_j)(g(\lambda_j)-1) \leq \psi(\lambda_j)\left(\sqrt{\psi^{-1}(\lambda_j)} -1\right) = f(\eta^{j+1}),\]
where we recall that 
\[f(x) = f_\psi(x) = \psi(\lambda^*(x))\left(\sqrt{\psi^{-1}(\lambda^*(x))} -1\right).\]
Then we can rearrange~\eqref{eq:pf-stitching-2} to read
\begin{equation}
\label{eq:pf-stitching-3}
    \psi^*(\|S_\tau\|_{\hatV_\tau^{-1}}) \leq \psi^*(\eta^{j+1}) \leq h_\tau(\lambda_j)D_\tau(\delta_j) + \eta^j\lambda_j(\eta - 1) + f(\eta^{j+1}),
\end{equation}
where we've used that $\|S_\tau\|_{\hatV_\tau^{-1}} \leq \eta^{j+1}$ on $E_j$ and the fact that $\psi^*$ is increasing. 
Now, we claim that 
\[\eta^j \lambda_j(\eta -1) \leq \frac{\psi^*(\eta^j)}{2}.\]
Let $s = \eta^j$ and define $R(s) = s \lambda_j(\eta-1) / \psi^*(s)$. We want to show that $R(s) \leq 1/2$. Using Lemma~\ref{lem:psi-properties}, we have $\psi^*(s) \geq s\lambda^*(s)/2$. Moreover, if $\psi'''\geq 0$, then $\lambda^*(\eta s)\leq \eta \lambda^*(s)$, so 
\[R(s) \leq \frac{2s \lambda^*(\eta s)(\eta-1)}{s\lambda^*(s)} \leq 2\eta (\eta-1),\]
for any $\eta\geq 1$. We must thus choose $\eta$ such that $\eta > 1$ (required in the definition of $E_j$) and $2\eta(\eta-1)\leq 1/2$, which holds for any $\eta$ satisfying 
\[1< \eta \leq \frac{1 + \sqrt{2}}{2}\approx 1.207.\]
For such $\eta$ \eqref{eq:pf-stitching-3} implies
\begin{align*}
    \psi^*(\|S_\tau\|_{\hatV_\tau^{-1}}) &\leq h_\tau(\lambda_j) D_\tau(\delta_j) + \frac{\psi^*(\eta^j)}{2} + f(\eta^{j+1}) \\ 
    &\leq h_\tau(\lambda_j) D_\tau(\delta_j) + \frac{\psi^*(\|S_\tau\|_{\hatV_\tau^{-1}})}{2} + f(\eta \|S_\tau\|_{\hatV_\tau^{-1}}),
\end{align*}
whence $\psi^*(\|S_\tau\|_{\hatV_\tau^{-1}}) \leq 2 h_\tau(\lambda_j) D_\tau(\delta_j)+2f(2\|S_\tau\|_{\hatV_\tau^{-1}})$, i.e.,  
\begin{equation}
    \|S_\tau\|_{\hatV_\tau^{-1}} \leq  (\psi^*)^{-1}\left(2 h_\tau(\lambda_j) D_\tau(\delta_j) + 2f(2 \|S_\tau\|_{\hatV_\tau^{-1}})\right). 
\end{equation}
Now we return to the scenario when $\lambda_j = \psi^{-1}(1/\upsilon)$ in~\eqref{eq:lambdaj}. In this case, we have 
\begin{equation*}
    \|S_\tau\|_{V_\tau^{-1}} \leq \frac{h_\tau(\psi^{-1}(1/\upsilon))D_\tau(\delta_j) + g(\psi^{-1}(1/\upsilon))/\upsilon}{\psi^{-1}(1/\upsilon)}.
\end{equation*}
We have thus shown that
\begin{equation*}
    \Pr\left(\|S_\tau\|_{V_\tau^{-1}} \geq B_\tau |E_j\right)\leq \delta,
\end{equation*}
where $B_\tau$ is the piecewise boundary
\begin{equation}
    B_\tau = \begin{cases}
        (\psi^*)^{-1}\left(2h_\tau(\lambda^*(\eta^{j(\tau)+1}) D_\tau(\delta_{j(\tau)})+2f(2\|S_\tau\|_{\hatV_\tau^{-1}})\right), &\text{if } \lambda^*(\eta^{j(\tau) + 1}) \geq \psi^{-1}(1/\upsilon), \\    
        \frac{h_\tau(\psi^{-1}(1/\upsilon))D_\tau(\delta_j) + g(\psi^{-1}(1/\upsilon))/\upsilon}{\psi^{-1}(1/\upsilon)},&\text{otherwise,}
    \end{cases}
\end{equation}
and $j(\tau)$ is the smallest $j\in\mathbb{N}_0$ such that $\|S_\tau\|_{V_\tau^{-1}} \leq \eta^{j+1}$. 
Therefore, 
\begin{align*}
\Pr\left(\|S_\tau\|_{V_\tau^{-1}} \geq B_\tau\right) 
    &= \sum_{j\geq 0} \Pr\left(\|S_\tau\|_{V_\tau^{-1}} \geq B_\tau|E_j\right)\Pr(E_j)  \\ 
    &\leq \sum_{j\geq 0} \Pr\left(\|S_\tau\|_{V_\tau^{-1}} \geq B_\tau|E_j\right) \\ 
    &\leq \sum_{j\geq 0} \delta_j = \delta \sum_{j\geq 0} \ell^{-1}(j) = \delta.    
\end{align*}
Now, by Lemma~\ref{lem:psi-properties}, $\lambda^*(a)$ is increasing in $a$. 
In particular,
\[\lambda^*(a) \xrightarrow{a\to \infty} \lambda_{\max}.\] 
Therefore, for large enough $t$, as long as $\psi^{-1}(1/\upsilon) < \lambda_{\max}$ (guaranteed by assumption), we have 
\[B_t = (\psi^*)^{-1}\left(2h_\tau(\lambda^*(\eta^{j(t)+1})) D_\tau(\delta_{j(t)})\right).\]
We may thus write that with probability $1-\delta$, 
\begin{equation}
\label{eq:pf-stitching-4}
  \|S_\tau\|_{V_\tau^{-1}} \leq B_\tau \lesssim (\psi^*)^{-1}\left(h_\tau(\lambda^*(\eta^{j(\tau)+1})) D_\tau(\delta_{j(\tau)})+f(2\|S_\tau\|_{V_\tau^{-1}})\right). 
\end{equation}
It remains to bound $h_\tau(\lambda^*(\eta^{j(\tau)+1}))$. 
As was done in the proof of Proposition~\ref{thm:stitching-simplified}, notice that $\alpha_t \leq \gamma_{\max}(U_0) / \gamma_{\min}(\hatV_t)$. Then,
\begin{align*}
    g_{\psi_\tau}(\lambda_j) &= \sqrt{ \psi^{-1}(\lambda_j)} - \sqrt{\alpha_t\psi^{-1}(\lambda_j)} \\ 
    &\geq \sqrt{\psi^{-1}(\lambda_j^*)} \left( 1- \sqrt{\frac{\gamma_{\max}(U_0)}{\gamma_{\min}(\hatV_\tau)}}\right)\\ 
    &\geq \sqrt{\psi^{-1}(\psi^{-1}(1/\upsilon))}\left(1 - \sqrt{\frac{\gamma_{\max}(U_0)}{\gamma_{\min}(\hatV_\tau)}}\right). 
\end{align*}
Hence 
\begin{align*}
    h_\tau(\lambda^*(\eta^{j(\tau)+1})) \lesssim  \frac{1}{\sqrt{1 - \gamma_{\max}(U_0)/\gamma_{\min}(\hatV_\tau)}}.
\end{align*}
Returning to~\eqref{eq:pf-stitching-4}, we have that 
with probability $1-\delta$, 
\begin{align*}
    \|S_\tau\|_{V_\tau^{-1}} &\lesssim (\psi^*)^{-1}\left(\frac{\log (\det V_\tau) + \log\bigg(\frac{\ell(j(\tau))}{\delta}\bigg)}{\sqrt{ 1 - \gamma_{\max}(U_0)/\gamma_{\min}(\hatV_\tau)}}  + f(2\|S_\tau\|_{\hatV_\tau^{-1}}) \right) \\ 
    &\lesssim  (\psi^*)^{-1}\left(\frac{\log (\det V_\tau) + \log(\log_{\eta}(\|S_\tau\|_{\hatV_\tau^{-1}})/\delta)}{\sqrt{ 1 - \gamma_{\max}(U_0)/\gamma_{\min}(\hatV_\tau)}}  + f(2\|S_\tau\|_{\hatV_\tau^{-1}}) \right),
\end{align*}
where we've used that $\ell(j(\tau)) \asymp j(\tau)^2$ and $\log (j(\tau)^2) \asymp \log (\log_\eta \|S_\tau\|_{\hatV_\tau^{-1}})$, 
which completes the proof.

\subsection{Proof of Corollary~\ref{cor:convex_conjugate_2}}
\label{proof:convex_conjugate_2}
Since $\psi$ is CGF-like, so too is $\psi^*$~\citep[Proposition A.1]{whitehouse2023time}. In particular, $\psi^*$ is strictly convex and increasing, implying that $\vp$ is strictly increasing and concave. It follows that for all $x,y$, 
\begin{equation}
\label{eq:concave_bound}
  \vp(x + y) - \vp(x) \leq \vp'(x)y.  
\end{equation}
(A concave function is bounded by its first order Taylor approximation.) Let $\hatV_t = V_t + U_0$ and set 
\begin{equation*}
    A_\tau = \frac{\log(\det \hatV_\tau) + \log(1/\delta)}{\sqrt{1 - \upsilon/\gamma_{\min}(\hatV_\tau)}},\quad B_\tau = \frac{\log\log(\|S_\tau\|_{\hatV_\tau^{-1}})}{\sqrt{1 - \upsilon/\gamma_{\min}(\hatV_\tau)}} + f(2\|S_\tau\|_{\hatV_\tau^{-1}}).
\end{equation*}
Now, for $\tau > \tau_0 = \inf\{t: \upsilon/\gamma_{\min}(\hatV_t) \leq b\}$ we have $1-\upsilon/\gamma_{\min}(\hatV_\tau) \geq 1-b$. For such $\tau$ therefore,  we have 
\[B_\tau \leq \frac{\log\log(\|S_\tau\|_{V_\tau^{-1}})}{\sqrt{1 - 1/\upsilon}} +f(2\|S_\tau\|_{\hatV_\tau^{-1}}) \leq \frac{\|S_\tau\|_{V_\tau^{-1}}(1 + 2\kappa)}{\sqrt{1 - b}},\]
using the assumption that $0<b<1$. 
Taking $x = A_\tau$ and $y = B_\tau$ in~\eqref{eq:concave_bound}, we obtain that the right hand side of~\eqref{eq:asymp-stitching-bound} is bounded as 
\begin{align*}
    \vp(A_\tau + B_\tau) &\leq \vp(A_\tau) + \vp'(A_\tau) B_\tau, 
\end{align*}
where we've used that $A_\tau \geq \log(1/\delta)$ for $\tau \geq \tau_0$. 
Since $\psi^*$ is strictly increasing and convex, $(\psi^*)'$ is also increasing. Therefore the term $(\psi^*)'(\vp(x))$ is increasing in $x$, and 
\[\vp'(A_\tau) = \frac{1}{(\psi^*)'(\vp(A_\tau))} \leq \frac{1}{(\psi^*)'(\vp(\log(1/\delta)))}  = \vp'(\log(1/\delta)),\]
where the equalities follows from the chain rule. Hence, with probability $1-\delta$, whenever $\tau\geq \tau_0$, 
\[\|S_\tau\|_{\hatV_\tau^{-1}} \leq \vp(A_\tau + B_\tau) \leq  \vp(A_\tau) + \frac{\vp'(\log(1/\delta))\|S_\tau\|_{\hatV_\tau^{-1}}(1 + 2\kappa)}{\sqrt{1-b}}.\]
Rearranging gives the desired result.

\subsection{Proof of Lemma~\ref{lem:bennett-process}}
\label{proof:bennett-process}
Our goal is to upper bound the term $\log \E_{t-1} \exp(\lambda\la \theta, X\ra)$ in~\eqref{eq:nsm-mgf}. Since $\log x \leq x - 1$ for all $x>0$, we have 
    \[\log \E_{t-1} \exp(\lambda \la \theta, X\ra) \leq \exp(\lambda \la \theta, X\ra) - 1 = \E_{t-1}\psi_{P,1}(\lambda \la \theta, X\ra),\]
    where the final inequality follows since $\E_{t-1}\la \theta, X\ra = 0$ (recall that $\psi_{P,1}(x) = e^x - x - 1$). Now, notice that $f(c) = \psi_{P,c}(x)$ is an increasing function of $c$ (not of $x$!). Since $\la \theta, X_t\ra\leq  b$ almost surely, we have 
    \begin{equation*}
        \frac{e^{\lambda\la \theta, X_t\ra} - \lambda \la \theta, X_t\ra  -1}{\la \theta, X_t\ra^2} \leq \psi_{P,b}(\lambda ). 
    \end{equation*}
    Rearranging and taking expectations gives 
    \begin{equation*}
        \E_{t-1}\psi_{P,1}(\lambda \la \theta, X_t\ra) \leq \E_{t-1}[ \la \theta, X_t\ra^2 ] \psi_{P,b}(\lambda ). 
    \end{equation*}
    This proves that 
    \[N_t^{\text{Ben}}(\theta) = \prod_{k\leq t} \exp\big\{ \lambda\la \theta, X_k\ra - \psi_{P,b}(\lambda)\E_{k-1}[\la \theta, X_k\ra^2]\big\} \leq N_t^B(\theta),\]
    hence $(N_t^{\text{Ben}}(\theta))$ is upper bounded by a nonegative martingale. Noticing that $N_t^{\text{Ben}}(\theta) = \exp\{\lambda \la \theta, S_t\ra - \psi_{P,b}(\lambda)\la \theta, V_t\theta\ra\}$ proves that $(S_t,V_t)$ is a sub-$\psi$ process for $S_t = \sum_{k\leq t} X_k$, $V_t = \sum_{k\leq t} \E_{k-1}X_kX_k^\intercal$. Adding any PSD matrix $U_0$ to $V_t$ can only make the process smaller, which completes the proof.

\subsection{Proof of Lemma~\ref{lem:bernstein-process}}
\label{proof:bernstein-process}
In the proof of Lemma~\ref{lem:bennett-process}, we showed that $\log\E_{t-1}\exp(\lambda\la \theta, X\ra) \leq \E_{t-1}\psi_{P,1}(\lambda\la \theta, X\ra)$. 
Using the Taylor expansion $e^x = \sum_{q\geq 0} x^q/q!$ it follows that $\psi_{P,1}(x) = \sum_{q\geq 2} \frac{x^q}{q!}$, hence 
    \begin{align*}
      \E_{t-1} \psi_{P,1}(\lambda \la \theta, X_t\ra) & = \sum_{q\geq 2} \frac{ \lambda^q\E_{t-1}[\la \theta, X_t\ra^q]}{q!}.
    \end{align*}
    Using assumption~\eqref{eq:bernstein-assumption} on the moments of $\la \theta, X\ra$, we have 
    \begin{align*}
        \sum_{q\geq 2} \frac{ \lambda^q\E_{t-1}[\la \theta, X_t\ra^q]}{q!} 
      &\leq \sum_{q\geq 2} \frac{\lambda^q c^{q-2}\E_{t-1}[\la \theta, X_t\ra^2]}{2} \leq \psi_{G,c}(\lambda)\E_{t-1}[\la \theta, X_t\ra^2],   
    \end{align*}
    if $\lambda<1/c$. This implies that the process 
    \[N_t^{\text{Bern}}(\theta) = \prod_{k\leq t} \exp\left\{\lambda \la\theta, X_k\ra - \psi_{G,c}(\lambda)\la \theta, \E_{k-1}X_kX_k^\intercal \theta\ra\right\},\]
    is upper bounded by $N_t^B(\theta)$ in~\eqref{eq:nsm-mgf} and hence is itself a nonnegative supermartingale. As in the proof of Lemma~\ref{lem:bennett-process}, noticing that we can rewrite $N_t^{\text{Bern}}(\theta)$ as $N_t^{\text{Bern}}(\theta) = \exp\{\lambda \la \theta, S_t\ra - \psi_{G,c}(\lambda) \la \theta, V_t\theta\ra \}$ for $V_t = \sum_{k\leq t} \E_{k-1}X_kX_k^\intercal$ shows that $(S_t,V_t)$ is sub-$\psi_{G,c}$, and adding any PSD matrix to $V_t$ maintains this property.

\section{Simulation Details}
\label{app:experimental-details}

\ifarxiv
Code may be found at \href{https://github.com/bchugg/sn-concentration}{https://github.com/bchugg/sn-concentration}. 
\fi 

\paragraph{Figure~\ref{fig:gt_and_subg_boundaries}}
In both figures we take $d=10$, $U_0= I_d$, and $V_t = U_0 + \sum_{k\leq t}X_kX_k^\intercal$ where $X_k$ is chosen based on UCB in a linear bandit setting. In particular, $X_k$ is the eigenvector corresponding to the maximum eigenvalue of $V_{k-1}^{-1}$. 

\paragraph{Figure~\ref{fig:subgamma}.} 
We take $U_0 = I_d$ and $d=20$ and $c=1$.  For the first 100 time steps, we make a full rank update in order to sufficiently grow the minimum eigenvalue. These updates are dampened based on the size of the eigenvalue, however, so there is still anisotropic growth of the matrix. The dampening factor is 1 for the top eigenvector (i.e., corresponding to the largest eigenvalue) and 0.1 for the bottom, with a linear interpolation between 1 and 0.1 in the remaining directions. 

After 100 timesteps, we make rank $k$ updates to the matrix (in the directions corresponding to the top $k$ eigenvalues). That is, 
\[V_t = V_{t-1} + \sum_{j\leq k} u_ku_k^\intercal, \]
where $u_1$ is the eigenvector corresponding to the largest eigenvalue, $u_2$ that corresponding to the second largest eigenvalue, and so on. For small growth of the determinant (the left panel) we use $k=1$, for moderate growth (middle panel) we use $k=10$ and for large growth (right panel) we use $k=19=d-1$ (nearly a full-rank update). 

We compare Theorem~\ref{thm:stitching-simplified} to Corollary 2 in \citet{whitehouse2023time}, which is tailored explicitly to sub-$\psi_{G,c}$ processes. It states that if $(S_t,V_t)$ is a sub-$\psi_{G,c}$ process with $V_t\succeq \upsilon I_d$ for all $t$, then, with probability $1-\delta$, for all $t$, 

\begin{align*}
    \|S_t\|_{V_t^{-1}} \leq \frac{1}{1-\eps} \sqrt{4(M_1(t) + M_2(t) + M_3(t)} + \frac{c\eta}{\sqrt{\gamma_{\min}(V_t)}}(M_1(t) + M_2(t) + M_3(t)),
\end{align*}
where 
\begin{align*}
    M_1(t) &= B\log\left(A\log_\eta\left(\frac{\gamma_{\max}(V_t)}{\upsilon}\right)\right), \\ 
    M_2(t) &= \log\left(\frac{1}{\delta(1 - 1/\beta)}\right), \\ 
    M_3(t) &= (d+1)\log\left(\frac{\beta \sqrt{\kappa(V_\tau)}}{\eps}\right).
\end{align*}
We take $\beta = 2$, $\eta=2$, and $\eps=1/2$, as suggested in their paper.  In all plots, we omit the first 100 time steps because our bound blows up when $\gamma_{\min}(V_t)$ is too small. As we say in the main paper, this is drawback of our bound, and one from which the bound of \citet{whitehouse2023time} does not suffer. Note that $\psi_{G,1}^{-1}(1) \approx 0.73$, so all our choices of $\lambda$ are legal. 

\paragraph{Figure~\ref{fig:emp-bern}.} We take $d=20$ and $U_0 = 2\cdot I_d$. For $k\in\{2,5,10\}$, we generate $X_t$ by letting the first $k$ components be random uniform noise, and then normalizing the vector such that $\|X_t\|_2 \leq 1/2$. We take $\muhat_t$ to be the empirical mean of the first $t$ observations, i.e., $\muhat_t = t^{-1}\sum_{k\leq t} X_k$. 

A per the discussion after Corollary~\ref{cor:empirical-bernstein}, when instantiating our bound we split $U_0$ into $\Gamma$ and $U_0'$. For these experiments we take $\Gamma= 1.5 U_0$, $U_0' = 0.5U_0$. We then apply the result with the process $(S_t,\Gamma + \sum_{k\leq t}(X_k - \muhat_{k-1})(X_k - \muhat_{k-1})^\top$ with the fixed matrix $U_0'$ in place of $U_0$. 

We compare Corollary~\ref{cor:empirical-bernstein} with the empirical Bernstein bound of \citet{whitehouse2023time}, which 
is given by~\eqref{eq:whitehouse-eb-bound}. However, for the purposes of the experiments, we use the following version of the bound, stated by \citet{whitehouse2023time} immediately after Theorem 4.1: For any $u>0$, with probability $1-\delta$, for all stopping times $\tau$, 
\begin{equation}
    \|S_\tau\|_{(V_\tau + U_0)^{-1}}\leq 2\sqrt{L(V_\tau)} + \frac{2 L(V_\tau)}{\gamma_{\min}(V_\tau + U_0)}, 
\end{equation}
where 
\begin{equation*}
    L(V_t) = \log\left(\frac{2}{\delta}\right) + \log\left(h\left(\log_2\left(\frac{\gamma_{\max}(V_t + U_0)}{\gamma_{\min}(U_0)}\right)\right)\right) + \log\left(2\sqrt{\kappa_t}\cdot N_{d-1}\left(\frac{1/4}{\sqrt{\kappa_t}}\right)\right), 
\end{equation*}
Despite being a relaxation of their main theorem for particular (unstated) hyperparameters, we find that it is actually tighter for the values of the hyperparameters that we tried, and more stable overall.


\end{document}